\newtheorem{theorem}{Theorem}[section]
\newtheorem{lemma}[theorem]{Lemma}
\newtheorem{proposition}[theorem]{Proposition}
\theoremstyle{definition}
\theoremstyle{remark}
\newtheorem{remark}[theorem]{Remark}
\numberwithin{equation}{section}
\newcommand{\K}{\mathcal{K}}
\newcommand{\F}{\mathcal{F}}
\newcommand{\p}{\mathfrak{p}}
\newcommand{\fp}{\mathbb{F}_p}
\newcommand{\fpbar}{\overline{\mathbb{F}}_p}
\newcommand{\W}{\mathbb{W}}
\newcommand{\qp}{\mathbb{Q}_p}
\newcommand{\cp}{\mathbb{C}_p}
\newcommand{\art}{\mathbf{ART}}
\newcommand{\wphi}{W_{\Phi}}
\newcommand{\ord}{\text{ord}}
\newcommand{\cm}{\mathcal{CM}}
\newcommand{\kphip}{k_{\Phi,\p}}
\newcommand{\kphipbar}{\overline{{k}_{\Phi,\p}}}
\newcommand{\epf}{\epsilon_{\mathfrak{p}_F}}
\newcommand{\phispbar}{\overline{\phi^{sp}}}
\newcommand{\phispec}{\phi^{sp}}
\newcommand{\jphi}[1]{J_{\Phi_1}\mathcal{D}_1^{(k)}(R^{(#1)}) \rightarrow \mathcal{D}_2^{(k)}(R^{(#1)})/J_{\Phi_2}\mathcal{D}_2^{(k)}(R^{(#1)})}
\newcommand{\Z}{\mathcal{Z}}
\newcommand{\wmatrix}{\begin{bmatrix}
0 & -1 \\
1 & 0
\end{bmatrix}}
\newcommand{\nmatrix}{\begin{bmatrix}
1 & x \\
0 & 1
\end{bmatrix}}
\newcommand{\Hom}{\text{Hom}}
\newcommand{\D}{\mathbb{D}}
\newcommand{\m}{\mathfrak{m}}
\newcommand{\ktildebarp}{\overline{\tilde{k}}_\p}
\newcommand{\wtildep}{\tilde{\mathbb{W}}_{\p}}
\begin{document}

\title{Special Correspondences of CM abelian Varieties and Eisenstein Series}

\author{Ali Cheraghi}
\address{The Fields Institute, 222 College Street, Toronto, ON M5T 3J1}
\email{acheragh@fields.utoronto.ca}




\setlength{\parindent}{0cm}

\begin{abstract}

Let $\cm_{\Phi}$ be the (integral model of the) stack of principally polarized CM abelian varieties with a CM-type $\Phi$. Considering a pair of nearby CM-types (i.e. different in exactly one embedding) $\Phi_1, \Phi_2$, we let $X = \cm_{\Phi_1} \times \cm_{\Phi_2}$ and define arithmetic divisors $\Z(\alpha)$ on $X$ such that the Arakelov degree of $\Z(\alpha)$ is (up to multiplication by an explicit constant) equal to the central value of the $\alpha^{\text{th}}$ coefficient of the Fourier expansion of the derivative of a Hilbert Eisenstein series.

\end{abstract}

\maketitle
\tableofcontents

\section{Introduction}

A program envisioned in \cite{Kudla1997}, \cite{Kudla2003} was to relate algebro-geometrical data, namely the degrees of some special divisors (correspondences between moduli spaces) to the coefficients of Fourier expansion of the central value of the derivative of an Eisenstein series. This has been successful in some special cases including \cite{Howard2009} 
for the case of CM elliptic curves, \cite{Howard2012} for the case of CM abelian varieties with CM by a field containing an imaginary quadratic field. In this paper, we consider general CM-fields (i.e. not necessarily containing an imaginary quadratic field) and CM abelian varieties having complex multiplication by this field and prove that degrees of some special divisors (a correspondence between the moduli space of CM abelian varieties with a fixed CM-type and the moduli space of CM abelian varieties with another fixed CM-type) are equal (up to a constant factor) to the Fourier coefficients of a Hilbert Eisenstein modular form.

Let $K$ be a CM-field with maximal totally real subfield $F$. Let $O_K$ be the ring of integers of $K$. We denote by $\tilde{K}$ the normal closure of the field $K$ and for a prime $\p$ of $\tilde{K}$, $\tilde{k}_{\p}$ be the residue field of $\tilde{K}$ at $\p$ and $\ktildebarp$ a choice of algebraic closure of it. Also we suppose that $K/\mathbb{Q}$ has a ``mild ramification condition'' for small primes. If $(A, \lambda_A)$ is an abelian variety with CM by $O_{K}$ with the $O_K$-principal polarization $\lambda_A$, $(B, \lambda_B)$ be an $O_K$-principally polarized abelian variety having action by $O_{K}$ and $O_{K}$-principal polarization $\lambda_B$, we make $\Hom_{O_{K}}(A,B)$ into a Hermitian space by letting $\langle f,g \rangle := \lambda_A^{-1} \circ g^{\vee} \circ \lambda_B \circ f$ where $g^{\vee}: B^{\vee} \rightarrow A^{\vee}$ is the dual of $g$. This Hermitian form is $O_{K}$-valued. We define a
Deligne-Mumford stack $\Z(\alpha)$ of triples $(A,B,f)$ where $A$ is an abelian scheme with CM by $O_{K}$ and 
$B$ is an $O_K$-principally polarized abelian scheme 
with CM by $O_K$ and $f \in \Hom_{O_{K}}(A,B)$ is 
an $O_{K}-$linear homomorphism such that $\langle f,f \rangle = \alpha$. We compute the Arakelov degree of this stack (as a stack over $\text{Spec} \tilde{O}$  (ring of integers of $\tilde{K}$)) defined as follows:
$$\widehat{\deg} \Z(\alpha) = \frac{1}{[\tilde{K}:\mathbb{Q}]} \sum_{\p \subset \tilde{O}} \log\; N(\p) \sum_{z \in \Z(\alpha)(\ktildebarp)} 
\frac{\text{length}(O^{\text{\'et}}_{\Z(\alpha),z})}{ \# \text{Aut}\; z} $$ 
where $O^{\text{\'et}}_{\Z(\alpha),z}$ is the strictly Henselian local ring at $z$. As CM points have a transitive action of a group, it follows that for a fixed prime $\p$ of $\tilde{O}$, above length is constant for all $z$ and the problem reduces to that of computing the length for just one point and $\# \Z(\alpha)(\ktildebarp)$. Let $\mathfrak{p}_F$ be the prime of $F$ below $\p$. For an fractional ideal $I$ of $O_F$ let $\rho(I)$ be the number of fractional ideals of $O_K$ with norm equal to $I$ in $F$, we will prove:
\begin{theorem}
\text{(Theorem 3.10)} Let $\alpha \in F^{\gg 0}$, assume that $K/F$ is not unramified at all finite primes, If $\mathfrak{p}$ is a prime such that $\mathfrak{p}_F$ is nonsplit, then
$$\# \Z(\alpha)(\ktildebarp) =\sum_{(A_1,A_2,f) \in Z(\alpha)(\ktildebarp)} \frac{1}{\# \text{Aut}(A_1,A_2,f)} = \frac{|C_K|}{w(K)} \rho(\alpha \mathfrak{p}_F^{-\epf} O_F)$$
\end{theorem}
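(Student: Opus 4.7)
The plan is to compute
$\#\Z(\alpha)(\ktildebarp) = \sum_z \frac{1}{\#\text{Aut}(z)}$
by disentangling three independent factors: the uniform automorphism count $w(K)$, the number $|C_K|$ of isomorphism classes of the ``source'' CM abelian variety, and an ideal count $\rho(\alpha \mathfrak{p}_F^{-\epf} O_F)$ arising from the Hermitian condition $\langle f, f\rangle = \alpha$.

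First, for any triple $z = (A_1, A_2, f)$ with $\alpha \neq 0$ (hence $f \neq 0$), an automorphism of $z$ is a pair $(\phi_1, \phi_2)$ of polarization-preserving $O_K$-automorphisms satisfying $\phi_2 \circ f = f \circ \phi_1$. In a CM field $K$, the units $u \in O_K^{\times}$ with $u \bar{u} = 1$ are precisely the roots of unity $\mu_K$ (by Kronecker's theorem), so $\text{Aut}_{O_K}(A_i,\lambda_{A_i}) = \mu_K$, acting as scalars. Since $f$ is $O_K$-linear and nonzero, the intertwining relation forces $\phi_1 = \phi_2$, and thus $\#\text{Aut}(z) = w(K)$ uniformly on $\Z(\alpha)(\ktildebarp)$. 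This extracts $\tfrac{1}{w(K)}$ from the sum and reduces the problem to counting isomorphism classes of triples.

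Next, by the main theorem of complex multiplication, over $\ktildebarp$ the isomorphism classes of $O_K$-principally polarized CM abelian varieties of type $\Phi_1$ form a torsor under $C_K$, producing $|C_K|$ choices of $A_1$. For a fixed $A_1$, any $O_K$-linear isogeny $f : A_1 \to A_2$ has $\ker f = A_1[\mathfrak{b}]$ for a unique integral $O_K$-ideal $\mathfrak{b}$, and the isomorphism class of $(A_2, f)$ over this $A_1$ is recovered from $\mathfrak{b}$ via $A_2 \cong A_1/A_1[\mathfrak{b}]$. Thus counting $(A_2, f)$ over a fixed $A_1$ reduces to counting integral $O_K$-ideals $\mathfrak{b}$ whose quotient inherits an $O_K$-principal polarization of type $\Phi_2$ and whose induced map $f_{\mathfrak{b}}$ satisfies $\langle f_{\mathfrak{b}}, f_{\mathfrak{b}}\rangle = \alpha$.

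The crux is to express these combined conditions as a single ideal-theoretic equation on $\mathfrak{b}$. Unwinding the definition, $\langle f_{\mathfrak{b}}, f_{\mathfrak{b}}\rangle = \lambda_{A_1}^{-1} \circ f_{\mathfrak{b}}^{\vee} \circ \lambda_{A_2} \circ f_{\mathfrak{b}}$ lies in $\text{End}_{O_K}(A_1) = O_K$ and in fact in $O_F$. I would show that as an ideal of $O_F$,
\[
\langle f_{\mathfrak{b}}, f_{\mathfrak{b}}\rangle O_F = N_{K/F}(\mathfrak{b}) \cdot \mathfrak{p}_F^{\epf},
\]
where the $N_{K/F}(\mathfrak{b})$ factor is the degree-theoretic contribution of the quotient isogeny $f_{\mathfrak{b}}$ and the shift by $\mathfrak{p}_F^{\epf}$ encodes the local discrepancy at $\mathfrak{p}_F$ between the two principal polarizations $\lambda_{A_1}$ and $\lambda_{A_2}$ dictated by the two nearby CM-types. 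Because $\Phi_1$ and $\Phi_2$ differ only at a single embedding lying above the nonsplit prime $\mathfrak{p}_F$, this discrepancy is concentrated at $\mathfrak{p}_F$ and governed by the ramification invariant $\epf$. Imposing $\langle f_{\mathfrak{b}}, f_{\mathfrak{b}}\rangle = \alpha$ then becomes $N_{K/F}(\mathfrak{b}) = \alpha \mathfrak{p}_F^{-\epf} O_F$, whose solution count is $\rho(\alpha \mathfrak{p}_F^{-\epf} O_F)$ by definition. Assembling the three factors gives the claimed formula. The main obstacle is the polarization computation at $\mathfrak{p}_F$: pinning down the exponent $\epf$ precisely requires a Dieudonn\'e-module analysis of the quotient $A_1 \to A_1/A_1[\mathfrak{b}]$ at the nonsplit (possibly ramified) prime, together with the nearby-types hypothesis to localize the correction at $\mathfrak{p}_F$; the global assumption that $K/F$ is not everywhere unramified is what ensures this local picture is compatible with the existence of the relevant $\mathfrak{b}$.
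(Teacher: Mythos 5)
Your overall architecture is reasonable and genuinely different from the paper's: the paper does not parametrize isogenies by kernel ideals at all, but instead fixes the Hermitian lattice $(\widehat{L}(A_1,A_2),\langle\,,\,\rangle)\cong(\widehat{O}_K,\beta x\bar y)$ with $\beta O_F=\mathfrak{p}_F^{\epf}$ (Theorem 3.8), writes the count as an adelic orbital integral over $H(F)\backslash H(\widehat F)/U$ for the norm-one torus $H$, and factors it into local orbital integrals whose product is $\rho(\alpha\beta^{-1}O_F)$. Your identification of the automorphism count $w(K)$ and of the $C_K$-torsor structure is correct and matches the paper. However, there are two genuine gaps.

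First, the parametrization ``$\ker f=A_1[\mathfrak{b}]$ for a unique integral $O_K$-ideal $\mathfrak{b}$'' fails at $\mathfrak{p}_F$ precisely in the case that produces the shift $\mathfrak{p}_F^{\epf}$. When $\mathfrak{p}_F$ is inert (unramified nonsplit) in $K$, the generator $f_0$ of the rank-one module $L(A_1,A_2)$ satisfies $\langle f_0,f_0\rangle O_K=\mathfrak{p}_F O_K$ (Proposition 2.3), so $f_0$ is an isogeny whose kernel at $p$ is a local-local subgroup scheme that is \emph{not} of the form $A_1[\mathfrak{b}]$ for any $O_K$-ideal $\mathfrak{b}$; it is a ``half'' of $A_1[\mathfrak{p}_K]$ visible only on Dieudonn\'e modules. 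So the set of isogenies is not in bijection with integral ideals; it is a torsor under the fractional ideals via $f=x\cdot f_0$, and the correction $\mathfrak{p}_F^{\epf}$ is a property of the distinguished generator $f_0$, not of a polarization discrepancy on the quotient. You gesture at the right local computation, but the stated bijection on which your count rests is false as stated.

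Second, and more seriously, you replace the scalar equation $\langle f,f\rangle=\alpha$ by the ideal equation $N_{K/F}(\mathfrak{b})=\alpha\mathfrak{p}_F^{-\epf}O_F$, which has $\rho(\alpha\mathfrak{p}_F^{-\epf}O_F)$ solutions by definition; but these are not equivalent conditions. For a given $\mathfrak{b}$ the value $\langle f_{\mathfrak{b}},f_{\mathfrak{b}}\rangle$ is some totally positive generator of the ideal, which need not equal $\alpha$, and the ambiguity (totally positive units modulo norms, together with the choice of $O_K$-linear principal polarization on $A_2$, which is itself part of the moduli data encoded in the pairs $(I,\zeta)$ defining $C_K$) must be resolved. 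This is exactly where the paper's genus-theoretic argument enters: the exact sequence $1\to C_K^{\circ}\to C_K\to\widehat O_F^{\times}/N_{K/F}\widehat O_K^{\times}\xrightarrow{\eta}\{\pm1\}$, the simple transitivity of $H(\widehat F)$ on the vectors of a given length, and the fact that $\chi_{K/F}(\beta)=1$ force exactly one class $\xi\in\ker\eta$ for which $\xi\alpha$ is represented, so that summing over the $C_K$-orbit of $A_2$ recovers exactly $\rho(\alpha\mathfrak{p}_F^{-\epf}O_F)$ rather than that number times a count of unit classes. Without this step your argument computes the answer only up to a factor measuring $\widehat O_F^{\times,\gg0}/N_{K/F}\widehat O_K^{\times}$, and the clean factor $|C_K|$ in the final formula (note $|C_K|=|\widehat O_F^{\times\gg0}/N_{K/F}\widehat O_K^{\times}|\,h(K)$, not $h(K)$) would not come out correctly.
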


After this theorem, to compute the lengths of the local rings $O^{\text{\'et}}_{\Z(\alpha),z}$, we relate it (using Serre-Tate theorem) to  ``CM $p$-divisible groups" defined over local rings and then use Grothendieck-Messing deformation theory.

\begin{theorem}
(Theorem 3.11) Let $\alpha \in F^{\gg 0}$, $\mathfrak{p}$ be a prime of $\tilde{K}$ such that $\mathfrak{p}_F$ is nonsplit in $K$, then at $z \in \mathcal{Z}(\alpha)(\ktildebarp)$:
$$length(O^{\text{\'{e}}t}_{\mathcal{Z}(\alpha),z}) = \frac{1}{2} e_{\mathfrak{p}_F}(\text{ord}_{\mathfrak{p}_F}(\alpha) + 1)$$
where $e_{\mathfrak{p}_F}$ is the ramification index of $\phispec(\mathfrak{p}_F)$ in $\tilde{K}/F$. 
\end{theorem}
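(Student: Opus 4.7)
The plan is to combine the Serre--Tate theorem with Grothendieck--Messing deformation theory. Let $p$ be the residue characteristic of $\p$. By Serre--Tate, the functor of deformations of $(A_1, A_2, f)$ at $z$ over Artinian local $\wtildep$-algebras is equivalent to the functor of deformations of the associated datum $(A_1[p^\infty], A_2[p^\infty], f[p^\infty])$ of $p$-divisible groups; hence the length to be computed equals the length of the universal deformation ring of this datum. The $O_K$-action gives a decomposition $A_i[p^\infty] \cong \prod_{\mathfrak{q} \mid p} A_i[\mathfrak{q}^\infty]$ indexed by primes $\mathfrak{q}$ of $O_F$ above $p$, compatible with $f$. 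Because $\Phi_1$ and $\Phi_2$ differ only at the distinguished embedding $\phispec$, which lies above $\mathfrak{p}_F$, at every $\mathfrak{q}\neq \mathfrak{p}_F$ the two CM-types agree on the embeddings above $\mathfrak{q}$ and the corresponding component of $f$ lifts uniquely to any Artinian thickening. The entire length is therefore concentrated in the $\mathfrak{p}_F$-factor.

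Next, I apply Grothendieck--Messing at $\mathfrak{p}_F$. Since $\mathfrak{p}_F$ is nonsplit in $K$, there is a unique prime of $K$ above it, and the Dieudonn\'e crystal $\D(A_i[\mathfrak{p}_F^\infty])$ decomposes into isotypic pieces indexed by embeddings $\phi$ of the completion $K_{\mathfrak{p}_F}$ into an algebraic closure. In each isotypic component the Hodge filtration is either zero or the whole module, according as $\phi$ does or does not belong to $\Phi_i$. Deformations correspond to compatible liftings of these Hodge filtrations, and the condition that $f_*$ preserve lifted filtrations is a nontrivial relation only in the $\phispec$-isotypic component, since this is the single embedding at which $\Phi_1$ and $\Phi_2$ disagree. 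Choosing $O_K$-equivariant bases, the universal deformation ring at $\mathfrak{p}_F$ is a formally smooth $\wtildep$-algebra $\wtildep[[t]]$, with $t$ parametrizing the Hodge filtration in the $\phispec$-direction, and the locus where $f$ lifts is cut out by an ideal $(t^N)$ for some integer $N$ to be determined.

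The remaining step, which I expect to be the main obstacle, is the explicit identification of $N$ from the Hermitian condition $\langle f,f\rangle = \lambda_A^{-1}\circ f^\vee \circ \lambda_B \circ f = \alpha$. The pairing must be translated, via the action on Dieudonn\'e modules, into a relation between the $\phispec$-matrix entry of $f_*$ and the $\mathfrak{p}_F$-adic valuation of $\alpha$. The factor $\tfrac{1}{2}$ should arise because $\langle f,f\rangle$ is quadratic in $f$, so that the obstruction on $f$ is of order half that on $\alpha$; the factor $e_{\mathfrak{p}_F}$ enters because the parameter $t$ lies in a deformation ring over $\wtildep$, so that the $\mathfrak{p}_F$-adic valuation of $\alpha$ must be converted into the $\p$-adic valuation via the ramification index of $\phispec(\mathfrak{p}_F)$ in $\tilde{K}/F$; and the $+1$ accounts for the initial deformation step, where even $\ord_{\mathfrak{p}_F}(\alpha)=0$ imposes one order of obstruction. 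Carrying this bookkeeping through carefully — tracking duals, polarizations, and the $O_K$-equivariance at the single embedding $\phispec$ — should yield exactly $N = \tfrac{1}{2} e_{\mathfrak{p}_F}(\ord_{\mathfrak{p}_F}(\alpha)+1)$, which is the stated length.
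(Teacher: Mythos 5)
Your overall skeleton --- Serre--Tate, then the decomposition $A_i[p^\infty]\cong\prod_{\mathfrak{q}\mid p}A_i[\mathfrak{q}^\infty]$, triviality of the deformation problem at $\mathfrak{q}\neq\mathfrak{p}_F$ because the two $p$-adic CM-types agree there, and Grothendieck--Messing at $\mathfrak{p}_F$ --- is exactly the paper's reduction. But there are two genuine problems with what you do at $\mathfrak{p}_F$. First, the claimed structure of the deformation space is wrong: a CM $p$-divisible group satisfying the $\Phi_i$-determinant condition is \emph{rigid}, i.e.\ the Hodge filtration is forced to be $J_{\Phi_i}\mathcal{D}_i(R)$ and each $(A_i,\kappa_i,\lambda_i)$ has a \emph{unique} deformation to every $R^{(k)}=\wtildep/\m^k$. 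There is no free parameter $t$, and the universal deformation ring of the pair is $\wtildep$ itself, not $\wtildep[[t]]$. The local ring of $\Z(\alpha)$ at $z$ is therefore a quotient $\wtildep/\m^k$, whose length is $k$; your ring $\wtildep[[t]]/(t^N)$ is one-dimensional and of infinite length, which would contradict the fact that $\Z(\alpha)$ has dimension $0$ and make the Arakelov degree diverge. The correct question is not ``what power of $t$ cuts out the lifting locus'' but ``to which $k$ does $f$ lift along the tower $R^{(k)}$,'' i.e.\ the determination of $L^{(k)}(A_1,A_2)$.

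Second, the identification of that $k$ is deferred entirely to heuristics (``should arise,'' ``should yield''), and this is precisely the technical heart of the theorem. The paper's computation requires: (i) showing $L(A_1,A_2)\cong O_{\K}$ with Hermitian form $\beta x\bar y$ where $\beta$ generates $\mathfrak{p}_{\F}O_{\K}$ in the unramified case and $O_{\K}$ in the ramified case --- this, not a ``quadratic in $f$'' heuristic, is where the $\tfrac12$ and the $+1$ actually come from; (ii) a base case showing an $O_{\K}$-generator lifts exactly to $L^{(\varepsilon)}$ (resp.\ $L^{(\varepsilon\,\ord_{O_\K}\mathfrak{D}_{\K/\F})}$), proved by evaluating the map $J_{\Phi_1}\mathcal{D}_1\to\mathcal{D}_2/J_{\Phi_2}\mathcal{D}_2$ on explicit generators $j(\psi,x)$ and using $\ord_{\psi_0}(s)=1$; and (iii) an induction step showing multiplication by $\pi_{\K}$ shifts the lifting order by exactly $\varepsilon$ in both directions. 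Moreover, all of this rests on the lemma that $R^{(b)}\to R^{(a)}$ is a PD-thickening when $b\le a+\varepsilon+1$, which is where the ramification hypothesis $(\ast)$ enters; without divided powers Grothendieck--Messing does not apply across these thickenings, and your argument never addresses this. As written, the proposal establishes the reduction to the local problem but not the local statement itself.
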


The {\bf main result} is the relation of this degree to the derivative of the Fourier coefficients of an $\text{SL}_2(F)$-Eisenstein series of parallel weight $(1,1,\cdots,1)$.

\begin{theorem}
(Theorem 5.2) Suppose that the following ramification conditions are satisfied:\\
1) $K/F$ is not unramified at all finite primes.

2) For every rational prime $l \leq \frac{[\tilde{K}:\mathbb{Q}]}{[K:\mathbb{Q}]}+1$, ramification index is less than $l$,
then we have
$$\widehat{\deg}\widehat{\mathcal{Z}}(\alpha) = \frac{-|C_K|}{w(K)} \frac{\sqrt{N_{F/\mathbb{Q}}(d_{K/F})}}{2^{r-1} [K:\mathbb{Q}]} b_{\Phi}(\alpha,y)$$
where $|C(K)| = |\widehat{O}_F^{\times \gg 0}/N_{K/F}\widehat{O}_K^{\times}| h(K)$ where $h(K)$ is the class number of $K$, $w(K)$ is the number of roots of unity in $K$, $d_{K/F}$ is the relative discriminant of $K/F$, $r$ is the number of places (including archimedean) ramified in $K$, and $b_{\Phi}(\alpha,y)$ is the  $\alpha^{\text{th}}$ coefficient of the Fourier expansion of the derivative of a Hilbert Eisenstein series.
\end{theorem}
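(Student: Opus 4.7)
The plan is to combine the local length and point-counting data from Theorems 3.10 and 3.11 with an archimedean Green function calculation, and then match the resulting expression place-by-place against the Fourier expansion of the derivative of an incoherent Hilbert Eisenstein series. First I would substitute Theorems 3.10 and 3.11 into the definition of $\widehat{\deg}\Z(\alpha)$. Primes $\mathfrak{p}_F$ of $F$ that split in $K$ contribute no CM points, so the sum reduces to nonsplit $\mathfrak{p}_F$. Summing over $\p \mid \mathfrak{p}_F$ in $\tilde{O}$ and using $\sum_{\p \mid \mathfrak{p}_F}\log N(\p) = [\tilde{K}:F]\log N(\mathfrak{p}_F)$, the prefactor $1/[\tilde{K}:\mathbb{Q}]$ collapses to $1/[F:\mathbb{Q}]$, and the geometric side becomes
$$\widehat{\deg}\Z(\alpha) = \frac{|C_K|}{w(K)} \cdot \frac{1}{2[F:\mathbb{Q}]} \sum_{\mathfrak{p}_F \text{ nonsplit}} e_{\mathfrak{p}_F}\bigl(\ord_{\mathfrak{p}_F}(\alpha)+1\bigr)\,\rho\bigl(\alpha\mathfrak{p}_F^{-\epf}O_F\bigr)\log N(\mathfrak{p}_F).$$

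Next I would include the archimedean contribution to $\widehat{\deg}\widehat{\Z}(\alpha)$, which comes from a Kudla-style Green function attached to $\alpha$ as in \cite{Howard2009} and \cite{Howard2012}; evaluating at a CM point is a standard exponential-integral computation producing the archimedean local factor of $b_{\Phi}(\alpha,y)$. On the analytic side, $E(g,s,\Phi)$ is the incoherent Hilbert Eisenstein series attached to the CM-type $\Phi$, so $E(g,0,\Phi)=0$ and the $\alpha$-th coefficient of $E'(g,0,\Phi)$ decomposes as a sum over places $v$ of the derivative of the local Whittaker function at $v$ times the product of the other local Whittaker values. Coherence forces every summand to vanish except at incoherent places, namely nonsplit finite primes and the archimedean places of the ``wrong'' signature. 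A direct local Whittaker computation along the lines of \cite{Howard2012} then shows that each finite nonsplit $\mathfrak{p}_F$ produces precisely $(\ord_{\mathfrak{p}_F}(\alpha)+1)\log N(\mathfrak{p}_F)\rho(\alpha\mathfrak{p}_F^{-\epf}O_F)$ up to a place-independent constant, while the archimedean Whittaker derivative matches the Green function calculation. The overall normalization $\sqrt{N_{F/\mathbb{Q}}(d_{K/F})}/(2^{r-1}[K:\mathbb{Q}])$ is then extracted from the standard functional-equation normalization of Hilbert Eisenstein series together with the count of incoherent archimedean places.

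The main obstacle is the detailed local Whittaker computation at a nonsplit \emph{ramified} prime $\mathfrak{p}_F$, where the ramification index $e_{\mathfrak{p}_F}$ of $\phispec(\mathfrak{p}_F)$ in $\tilde{K}/F$ appearing in Theorem 3.11 has to be tracked through the local functional equation and matched with the analytic side. Ramification hypothesis (2) on small primes is exactly the input needed to guarantee that the Grothendieck--Messing deformation calculation underlying Theorem 3.11 remains valid, and verifying that it suffices uniformly at every nonsplit place, so that the local identities combine into the global identity claimed, is the most delicate part of the argument.
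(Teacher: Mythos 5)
Your overall route is the same as the paper's: substitute Theorems 3.10 and 3.11 into the definition of $\widehat{\deg}$, add the archimedean Green-function term from Section 4, and match the result against the local Whittaker computation of Proposition 5.1 (which the paper performs by quoting Yang's explicit formulas). The genuine problem is in your assembly of the geometric side. Since $\tilde{K}/\mathbb{Q}$ is a normal closure, $\tilde{K}/F$ is Galois, so for a nonsplit $\mathfrak{p}_F$ one has
$$\sum_{\p\mid\mathfrak{p}_F}\log N(\p)\;=\;g_{\mathfrak{p}_F}\,f_{\mathfrak{p}_F}\,\log N(\mathfrak{p}_F)\;=\;\frac{[\tilde{K}:F]}{e_{\mathfrak{p}_F}}\,\log N(\mathfrak{p}_F),$$
not $[\tilde{K}:F]\log N(\mathfrak{p}_F)$ as you assert; the correct identity is $\sum_{\p\mid\mathfrak{p}_F}e_{\mathfrak{p}_F}\log N(\p)=[\tilde{K}:F]\log N(\mathfrak{p}_F)$. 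The resulting $1/e_{\mathfrak{p}_F}$ is precisely what cancels the ramification index $e_{\mathfrak{p}_F}$ produced by the length formula of Theorem 3.11, so that the geometric side (Theorem 3.12) reads
$$\widehat{\deg}\,\Z(\alpha)=\frac{|C_K|}{w(K)}\sum_{\substack{\mathfrak{p}\subset O_F\\ \text{nonsplit}}}\frac{\log N(\mathfrak{p})}{[K:\mathbb{Q}]}\bigl(\ord_{\mathfrak{p}}(\alpha)+1\bigr)\,\rho\bigl(\alpha\mathfrak{p}^{-\epsilon_{\mathfrak{p}}}\bigr),$$
with no residual $e_{\mathfrak{p}_F}$ (here $\epsilon_{\mathfrak{p}}\in\{0,1\}$ is the indicator of ramification of $K/F$ at $\mathfrak{p}$, which you must keep distinct from the index $e_{\mathfrak{p}_F}$ of $\tilde{K}/F$). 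Your displayed formula keeps an extra factor $e_{\mathfrak{p}_F}$ inside the sum, and that factor cannot be matched: Proposition 5.1(2) yields $(\ord_{\mathfrak{p}}\alpha+1)\rho(\alpha\mathfrak{p}^{-\epsilon_{\mathfrak{p}}})\log N(\mathfrak{p})$ with no such term. Hence the ``main obstacle'' you single out --- tracking $e_{\mathfrak{p}_F}$ through the local functional equation --- is a phantom; the cancellation happens entirely within the geometric side and never reaches the analytic one. Since the hypotheses force $K/F$ to ramify at some finite prime, places with $e_{\mathfrak{p}_F}>1$ genuinely occur and the discrepancy is not vacuous.

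A secondary omission: for a fixed incoherent datum $c$ and fixed $\alpha$, only the single place in $\mathrm{Diff}(\alpha,c)$ contributes to $\mathcal{E}'_{\alpha}(\tau,0,c,\psi_F)$. The full sum over nonsplit finite primes (and the archimedean term) on the analytic side only appears after summing over the classes $c\in\Xi$, which is exactly how the paper defines $b_{\Phi}(\alpha,y)$; your argument should make this summation over $\Xi$ explicit rather than attributing the sum over places to a single Eisenstein series.
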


Before this result, a special case was considered in \cite{Howard2012} where $K$ contains a quadratic imaginary field $K_0$. Let $K$ be a CM-field containing a quadratic imaginary field $K_0$ with maximal totally real subfield $F$ and $\Phi$ be a CM-type of $K$. By the signature of $\Phi$, we mean a pair of numbers $(r,s)$ such that $r$ is the number of elements of $\Phi$ that restrict to identity on $K_0$ and $s$ is the rest. Let $O_K, O_{K_{0}}$ be the ring of integers of $K, K_0$, respectively. We denote by $K_{\Phi}$ the reflex field of $\Phi$ and for a prime $\p$ of $K_{\Phi}$, $\kphip$ be the residue field of $K_{\Phi}$ at $\p$ and $\kphipbar$ a choice of algebraic closure of it. Also suppose that $F, K_0$ have relatively prime odd discriminants so in particular $O_K = O_{K_0} \otimes O_F$. If $(A, \lambda_A)$ is an elliptic curve with CM by $O_{K_0}$ with the principal polarization $\lambda_A$, $(B, \lambda_B)$ be an $O_K$-principally polarized abelian variety having action by $O_{K_0}$ and $O_{K_0}$-principal polarization $\lambda_B$, we turn $\Hom_{O_{K_0}}(A,B)$ into a Hermitian space by letting $\langle f,g \rangle := \lambda_A^{-1} \circ g^{\vee} \circ \lambda_B \circ f$ where $g^{\vee}: B^{\vee} \rightarrow A^{\vee}$ is the dual of $g$. This Hermitian form is $O_{K_0}$-valued, we can also define $\langle\; ,\; \rangle_{CM}$ to be the unique $K$-valued Hermitian form with the property that $\text{tr} \langle\; ,\; \rangle_{CM} = \langle \;,\; \rangle$. In the aforementioned paper, Howard defines a 
Deligne-Mumford stack $\Z(\alpha)$ of triples $(E,A,f)$ where $E$ is an elliptic curve (over 
the assigned scheme) with CM by $O_{K_0}$ and 
$A$ is an $O_K$-principally polarized abelian variety 
with CM by $O_K$ and $f \in \Hom_{O_{K_0}}(E,A)$ is 
an $O_{K_0}-$linear homomorphism such that $\langle f,f \rangle_{CM} = \alpha$. He computes the Arakelov degree of this stack (as a stack over $\text{Spec} O_{\Phi}$  (ring of integers of $K_{\Phi}$)) defined as follows:
$$\widehat{\deg} \Z(\alpha) = \frac{1}{[K_{\Phi}:\mathbb{Q}]} \sum_{\p \subset O_{\Phi}} \log\; N(\p) \sum_{z \in \Z(\alpha)(\kphipbar)} 
\frac{\text{length}(O^{\text{\'et}}_{\Z(\alpha),z})}{ \# \text{Aut}\; z} $$ 
where $O^{\text{\'et}}_{\Z(\alpha),z}$ is the strictly Henselian local ring at $z$. As CM points have a trasitive action of a group, it follows that for a fixed prime $\p$ of $O_{\Phi}$, above length is constant for all $z$ and the problem reduces to that of computing the length for just one point and $\# \Z(\alpha)(\kphipbar)$.

The main result of \cite{Howard2012} is the relation of this degree to the derivative of the Fourier coefficients of an $\text{SL}_2(F)$-Eisenstein series of parallel weight $(1,1,\cdots,1)$. Suppose that we have the Fourier expansion
$$E(\tau,s) = \sum_{\alpha \in F} a_{\alpha}(s,y) e^{2\pi i \text{Tr}_{F/\mathbb{Q}}(\alpha \tau)}$$
for this Eisenstein series where $x+iy = \tau \in \mathfrak{h}^{[F:\mathbb{Q}]}$ where $\mathfrak{h}$ is the upper-half plane and $s \in \mathbb{C}$, then

\begin{theorem}
(\cite{Howard2012}, Theorem 4.2.3)
For $\alpha \in F$ totally positive, we have the following equality:
\begin{equation}
\label{1} 
\widehat{\deg}\Z(\alpha) = -\frac{h(K_0)}{w(K_0)} \frac{\sqrt{N_{F/\mathbb{Q}}(d_{K/F})}a^{\prime}_{\alpha}(0,y)}{2^{r-1}[K:\mathbb{Q}]}
\end{equation}
where $w(K_0)$, $N(d_{K/F})$, $r$ are defined as the above  theorem.
\end{theorem}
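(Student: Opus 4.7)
The plan is to prove the identity by decomposing both sides into a sum of local contributions indexed by the places of $\tilde{K}$ and matching them place by place. On the geometric side, $\widehat{\deg}\widehat{\Z}(\alpha)$ splits as the finite-prime Arakelov degree defined in the introduction, plus an archimedean contribution from Green function integrals on the complex fiber of $\cm_{\Phi_1}\times\cm_{\Phi_2}$. On the analytic side, the derivative at $s=0$ of the $\alpha^{\text{th}}$ Fourier coefficient of the Hilbert Eisenstein series factors, up to an explicit global normalizing constant, as an Euler product of local Whittaker integrals with exactly one derivative distributed among them: at places where the unperturbed local Whittaker factor vanishes, the derivative contributes a $\log N(v)$ term, while the nonvanishing factors combine into a global counting function.

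For the finite part, I substitute Theorem 3.10 and Theorem 3.11 into the definition of $\widehat{\deg}\Z(\alpha)$ to obtain
\[
\widehat{\deg}\Z(\alpha) = \frac{|C_K|}{2\, w(K)\,[\tilde{K}:\mathbb{Q}]} \sum_{\p} \log N(\p)\; e_{\p_F}\bigl(\ord_{\p_F}(\alpha) + 1\bigr)\;\rho\bigl(\alpha\,\p_F^{-\epf}\,O_F\bigr),
\]
the sum being over primes $\p$ of $\tilde{O}$ whose restriction $\p_F$ is nonsplit in $K$. On the Eisenstein side, standard local computations for $\mathrm{SL}_2(F)$ Eisenstein series of parallel weight one (as in the work of Kudla--Rapoport--Yang and in \cite{Howard2012}) identify the $\p_F$-component of $a'_\alpha(0,y)$ with $e_{\p_F}(\ord_{\p_F}(\alpha)+1)\log N(\p_F)$ at nonsplit primes and with the local ideal-counting factor of $\rho(\alpha\,\p_F^{-\epf}O_F)$ at split primes. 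Converting from primes of $F$ to primes of $\tilde{K}$ and keeping track of the resulting multiplicities reproduces the claimed prefactor together with the constant $\sqrt{N_{F/\mathbb{Q}}(d_{K/F})}/(2^{r-1}[K:\mathbb{Q}])$, which arises from the normalization of the Eisenstein series and the $\Gamma$-factors at infinity.

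For the archimedean places I take the Green function defining $\widehat{\Z}(\alpha)$ to be the Kudla-type Green function attached to the Hermitian form on $\Hom_{O_K}(A,B)\otimes\mathbb{R}$. Its integral over the complex fiber is an exponential integral of the form $\int_1^\infty e^{-2\pi\alpha_\sigma y_\sigma t}\,dt/t$, which matches, place by place, the derivative at $s=0$ of the archimedean Whittaker factor of the normalized Eisenstein series; the factor $2^{r-1}$ and the discriminant term appear here naturally through the functional equation.

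The main obstacle is verifying that all local constants match in the general CM setting, where unlike in \cite{Howard2012} there is no imaginary quadratic subfield $K_0 \subset K$ to simplify the Hermitian lattice structure or the local Weil representation. In particular, at primes of $F$ ramified in $\tilde{K}$, the Grothendieck--Messing computation underlying Theorem 3.11 must be reconciled carefully with the local Whittaker derivative, and the mild ramification hypothesis $(2)$ in the statement is precisely what is needed to keep these $p$-divisible group calculations valid and to prevent anomalous contributions at small primes. Once the prime-by-prime match is established at both finite and archimedean places, summing over all places of $\tilde{K}$ and multiplying by the global normalization produces the claimed identity.
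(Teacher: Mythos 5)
First, note that the paper itself does not prove this statement: it is quoted verbatim from \cite{Howard2012} (Theorem 4.2.3) as background for the special case in which $K$ contains an imaginary quadratic field $K_0$, and the stack $\Z(\alpha)$ appearing in it parametrizes triples $(E,A,f)$ with $E$ a CM \emph{elliptic curve} with CM by $O_{K_0}$, $A$ a CM abelian variety for $K$, and $f\in\Hom_{O_{K_0}}(E,A)$ with $\langle f,f\rangle_{CM}=\alpha$, all over the reflex ring $O_{\Phi}$. Your general template --- decompose both sides over places, feed point counts and local lengths into the finite part, and match against local Whittaker derivatives --- is indeed the strategy of both Howard's proof and of this paper's own main theorem (Theorem 5.2), so the architecture is right.

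The genuine gap is that you prove the wrong theorem with it. You substitute Theorems 3.10 and 3.11 of the present paper, which are established for the stack of pairs $(A_1,A_2,f)$ with $A_i\in\cm_{\Phi_i}$ for two \emph{nearby CM-types of the same CM field} $K$, over $\tilde{O}$; the output is the constant $|C_K|/w(K)$ and a sum over primes of $\tilde{O}$. The statement at hand instead requires the constant $h(K_0)/w(K_0)$ and a sum over primes of $O_{\Phi}$, and these are not the same objects: $C_K$ is the group of pairs $(I,\zeta)$ with $\zeta I\bar I=O_K$, while $h(K_0)$ is the class number of the imaginary quadratic subfield, reflecting the transitive action of $\mathrm{Pic}(O_{K_0})$ on CM elliptic curves. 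The point count and deformation-length inputs needed here are Howard's own (built on the rank-$[F:\mathbb{Q}]$ Hermitian $O_{K_0}$-module $\Hom_{O_{K_0}}(E,A)$ and its induced $K$-valued form $\langle\,\cdot\,,\,\cdot\,\rangle_{CM}$), not the present paper's, and your proposal gives no argument that the two moduli problems yield the same local data --- indeed they coincide only when $F=\mathbb{Q}$. Finally, the archimedean paragraph is vacuous for this statement: since $\alpha$ is totally positive, the Green current contributes nothing and only the finite places enter.
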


The way to prove our main result is to compute both the coefficients and degrees separately and see that they match. To compute the coefficients we use Yang's formulas \cite{Yang2013}, to compute the degrees we use Serre-Tate and Grothendieck-Messing theory. Indeed, for a pair of CM abelian varieties, we consider their $p$-divisible groups (so that these $p$-divisible groups have actions by the ring of integers of a local field) and then we will compute explicitly the condition on lifting of these $p$-divisible groups and maps between them.

In section 2, we consider CM p-divisible groups and determine the liftings of them that will be used to compute the lengths of local rings appearing in the Arakelov degree. In section 3, we define the moduli stack and special divisors and compute the Arakelov degrees of them. In section 4, we extend the special divisors to arithmetic Chow group to be able to define more general special divisors that will be related to negative part of Fourier series of the Hilbert Eisenstein series. In section 5, we define the Eisenstein series that will be related to these Arakelov degrees and finish the proof of the main theorem.

{\textit{Acknowledgements}.} This is part of author's thesis and I would like to thank my advisor, Stephen Kudla, for continuous helpful conversations and insights. I would also like to thank Ben Howard  for helpful correspondence.

\section{Local computation}
         
         In this section we will compute the length of the local ring appearing in the Arakelov degree by local considerations and use of Grothendieck-Messing theory. Here we consider the two cases of different CM-types and same CM-types, because when we change two abelian varieties by their $p$-divisible groups, the $p$-adic CM-types (to be defined below) might be equal or have exactly one difference.

\subsection{$p$-divisible groups with different CM-types}

Let $p$ be a prime, $\fp$ be the field with $p$ elements and $\fpbar$ the algebraic closure of it, $\W = \W(\fpbar)$ be the Witt vectors over $\fpbar$ (equivalently, the completion of the ring of integers of maximal unramified extension of $\qp$), $\cp$ the completion of the algebraic closure of $\qp$, also in this paper by a scheme we always mean a locally Noetherian scheme and an algebraic stack means a Deligne-Mumford stack, $\F$ be an extension of $\qp$ of degree $n$, $\K/\F$ a quadratic field extension of local fields, $(O_\K,\p_\K), (O_\F,\p_\F)$ are their ring of integers and the nonzero prime ideals of $O_{\K}, O_{\F}$, respectively. $x \mapsto \bar{x}$ be the nontrivial automorphism of $\K$ over $\F$. If $\phi: \K \hookrightarrow \cp$ is an embedding, then $\bar{\phi}(x) = \phi(\bar{x})$. 

Suppose that $\Phi_1, \Phi_2$ be a pair of nearby $p$-adic CM-types (A $p$-adic CM-type consists of $n$ embeddings $\K \hookrightarrow \cp$ such that for an embedding $\phi: \K \hookrightarrow \cp$ either $\phi$ or $\bar{\phi}$ is in $\Phi$), i.e. $\# (\Phi_1 \cap \Phi_2) = n-1$, so there is a unique $\phi^{sp} \in \Phi_1$ such that $\overline{\phi^{sp}} \in \Phi_2$, Now
fix two triples $(A_1, \kappa_1, \lambda_1), (A_2, \kappa_2, \lambda_2)$ such that for $i=1,2$:\\
$\bullet$ $A_i$ is $p$-divisible group over $\fpbar$.\\
$\bullet$ $\kappa_i: O_\K \rightarrow \text{End} A_i$ satisfies $\Phi_i$-determinant condition, i.e. for $x \in O_\K$, $t-x$ acts on $\text{Lie}(A_i)$ with determinant $\prod_{\phi \in \Phi_i} (t - \phi(x))$.\\
$\bullet$ $\lambda_i: A_i \rightarrow {A}^{\vee}_i$ be an $O_\K$-linear isomorphism, i.e. for all $x \in O_{\K}$, $\lambda_i \circ x = \bar{x} \circ \lambda_i$.

First of all, $A_i$'s are supersingular (i.e. all slopes of their Dieudonn\'e modules are $\frac{1}{2}$) by proposition 2.1.1 of \cite{Howard2012}. Now we consider the $O_\K$-module $L(A_1,A_2)= \Hom_{O_\K}(A_1,A_2)$ of all $O_\K$-linear homomorphisms from $A_1$ to $A_2$ and define a Hermitian form on it:
$$ \langle f,g \rangle = \lambda_1^{-1} \circ {g}^{\vee} \circ \lambda_2 \circ f \in \text{End}_{O_\K} A_1 = O_\K,\;\;\; f,g \in L(A_1,A_2)$$

Define $S = O_\K \otimes_{\mathbb{Z}_p} \W$, $\text{Fr} \in \text{Aut} \W$ be Frobenius, 
then $\text{Fr}$ acts on $S$ by $x \otimes w \mapsto x \otimes 
w^{\text{Fr}}$. Let $\K^{u}, \F^{u}$ be the maximal unramified extensions of $\qp$ inside $\K, \F$, respectively and $O^u_{\K}$, $O^u_{\F}$ their rings of integers. For each $\psi: O_\K^{u} \rightarrow \W$, there is an 
idempotent $e_{\psi} \in S$ such that $(x \otimes 1)e_{\psi} = (1 \otimes 
\psi(x))e_{\psi}$ for all $x \in O_\K^{u}$. They satisfy $
(e_{\psi})^{\text{Fr}} = e_{\text{Fr} \circ \psi}$. Also, that $S = 
\prod_{\psi} e_{\psi} S$ where $e_{\psi} S \cong \W_\K$ (ring of integers of the completion of maximal unramified extension of $\K$) is a DVR, so we 
also have the maps $\text{ord}_{\psi}: S \rightarrow \mathbb{Z}^{\geq 0} \cup \lbrace 0 \rbrace$. Put $m(\psi, \Phi_i) = \# \lbrace \phi \in \Phi_i : \phi|_{O_\K^{u}} = \psi \rbrace$. Let $\D(A_i)$ be the Dieudonn\'e module of $A_i$.  Here, we state lemma 2.3.1 of \cite{Howard2012}. 

\begin{lemma}
For $i= 1 ,2$, 
we have an isomorphism of $S$-modules $\D(A_i) \cong S$. $F,V \in \text{\normalfont End} \D(A_i)$ act on $S$ by $F = a_i \circ \text{\normalfont Fr}$, $V= b_i \circ \text{\normalfont Fr}^{-1}$,  for some $a_i, b_i \in S$ such that $a_ib_i^{\text{\normalfont Fr}} = p$ and $\text{ord}_{\psi} (b_i) = m(\psi, \Phi_i)$ for all $\psi: O_\K^{u} \rightarrow \W$.
\end{lemma}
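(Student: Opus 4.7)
The lemma is the structure theorem for covariant Dieudonn\'e modules carrying a CM action, and I would follow the plan of \cite{Howard2012} in three steps.

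\emph{Step 1: $\D(A_i)$ is free of rank one over $S$.} Both $\D(A_i)$ and $S = O_{\K} \otimes_{\mathbb{Z}_p} \W$ are $\W$-free of rank $2n = [\K:\qp]$. Decomposing via the orthogonal idempotents $e_{\psi}$, one has $\D(A_i) = \bigoplus_{\psi} e_{\psi}\D(A_i)$, and each summand is a finite, torsion-free module over the discrete valuation ring $e_{\psi}S \cong \W_{\K}$. A rank-count summand by summand (using $[\K:\K^{u}]\cdot[\K^{u}:\qp] = 2n$) forces each $e_{\psi}\D(A_i)$ to be free of rank one over $e_{\psi}S$; assembling local generators produces an $S$-basis of $\D(A_i)$.

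\emph{Step 2: Form of $F$ and $V$.} Under the identification $\D(A_i) \cong S$, the Frobenius $F$ is an $O_{\K}$-linear, $\text{Fr}$-semilinear self-map of $S$, and any such map is of the shape $s \mapsto a_i s^{\text{Fr}}$ for a unique $a_i \in S$; similarly, $V$ is $O_{\K}$-linear and $\text{Fr}^{-1}$-semilinear, hence $s \mapsto b_i s^{\text{Fr}^{-1}}$ for a unique $b_i \in S$. The relation $FV = p$ on $\D(A_i)$ expands to
$$a_i b_i^{\text{Fr}} s = ps \quad \text{for all } s \in S,$$
yielding $a_i b_i^{\text{Fr}} = p$.

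\emph{Step 3: $\text{ord}_{\psi}(b_i) = m(\psi, \Phi_i)$.} Identify $\text{Lie}(A_i) = \D(A_i)/V\D(A_i)$ with $S/b_i S$; its $\psi$-isotypic component is $e_{\psi}S/b_i e_{\psi}S$, a cyclic module over the DVR $e_{\psi}S$ whose $\fpbar$-dimension equals $\text{ord}_{\psi}(b_i)$. The $\Phi_i$-determinant condition, evaluated on $x \in O_{\K}^{u}$, reads
$$\det(t - x \mid \text{Lie}(A_i)) = \prod_{\psi}(t - \psi(x))^{m(\psi, \Phi_i)},$$
so the same $\psi$-component has $\fpbar$-dimension $m(\psi, \Phi_i)$. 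Matching the two computations gives the claimed equality.

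\emph{Main obstacle.} The delicate step is Step 1: the freeness of $\D(A_i)$ over $S$ depends on the supersingularity of $A_i$ (guaranteeing $F, V$ are injective so that the $\W$-ranks match without degeneracy) together with careful bookkeeping of ramification and residue degrees inside the unramified subextensions $\K^{u}, \F^{u}$. Once the free rank-one structure is in hand, Steps 2 and 3 are formal consequences of the semilinearity of $F$ and $V$ and the compatibility of the idempotents with the $O_{\K}^{u}$-action on $\text{Lie}(A_i)$.
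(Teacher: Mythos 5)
The paper itself does not prove this lemma: it is quoted verbatim as Lemma 2.3.1 of \cite{Howard2012}, so there is no internal argument to compare against. Your three-step plan is the standard proof of that result, and Steps 2 and 3 are correct and essentially complete: $O_\K$-linearity plus $\text{Fr}^{\pm 1}$-semilinearity pin down $F$ and $V$ as $a_i\circ\text{Fr}$ and $b_i\circ\text{Fr}^{-1}$ with $a_i=F(1)$, $b_i=V(1)$, and $FV=p$ gives $a_ib_i^{\text{Fr}}=p$; then comparing $\dim_{\fpbar}e_\psi(\D(A_i)/V\D(A_i))=\dim_{\fpbar}\bigl(e_\psi S/b_ie_\psi S\bigr)=\text{ord}_\psi(b_i)$ with the $\Phi_i$-determinant condition evaluated at an $x\in O_\K^{u}$ whose images $\psi(x)$ are pairwise distinct yields $\text{ord}_\psi(b_i)=m(\psi,\Phi_i)$.

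Step 1, however, has a gap as written. Each $e_\psi\D(A_i)$ is indeed finitely generated and torsion-free, hence free of some rank $r_\psi$ over the DVR $e_\psi S$, but the total rank count only gives $\sum_\psi r_\psi=d$, where $d=[\K^{u}:\qp]$ is the number of idempotents; it does not by itself force every $r_\psi$ to equal $1$ (a priori one component could have rank $2$ and another rank $0$). The missing ingredient is that $F$ is injective --- which follows for any $p$-divisible group from $VF=p$ and $p$-torsion-freeness of $\D(A_i)$, and has nothing to do with supersingularity --- and is $\text{Fr}$-semilinear, so it carries $e_\psi\D(A_i)$ injectively into $e_{\text{Fr}\circ\psi}\D(A_i)$, whence $r_\psi\leq r_{\text{Fr}\circ\psi}$. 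Since $\text{Fr}$ permutes the $\psi$ in a single cycle, all the $r_\psi$ are equal, and only then does $\sum_\psi r_\psi=d$ force $r_\psi=1$ for every $\psi$; assembling local generators (e.g.\ via Nakayama) then gives $\D(A_i)\cong S$. With that argument inserted, your proof is complete and agrees with the one in the cited source.
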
 

$L(A_1,A_2) \subseteq \text{Hom}_{O_\K \otimes_{\mathbb{Z}_p} \W}(\D(A_1),\D(A_2)) \cong \text{Hom}_S (S,S) \cong S$ as $S$-modules, so by the above $L(A_1,A_2) = \lbrace s \in S | (b_1s)^{\text{Fr}} = sb_2^{\text{Fr}} \rbrace$ (these are elements of $S$ that are compatible with $V$ (and so $F$)).

\begin{lemma}
\label{lemma2.2}
$\langle\; ,\; \rangle$ on $L(A_1,A_2)$ is identified with $\langle s_1,s_2\rangle = \xi s_1 \overline{s_2}$ (with identification as a subset of $S$) with $\xi \in S$ satisfying:\\
1) $\bar{\xi} = \xi$\\
2) $\xi S = S \;\;(\text{i.e. } \xi \in S^{\times})$\\
3) $(b_1\overline{b_1})^{\text{\normalfont{Fr}}} \xi = \xi^{\text{\normalfont{Fr}}} (b_2\overline{b_2})^{\text{\normalfont{Fr}}}$
\end{lemma}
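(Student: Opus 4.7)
The plan is to translate the defining formula $\langle f,g\rangle=\lambda_1^{-1}\circ g^{\vee}\circ\lambda_2\circ f$ through the Dieudonn\'e functor and the rank-one identification $\D(A_i)\cong S$ of Lemma~2.1, then read the three conditions off the resulting expression. First, I would recast each polarization as a pairing: composing $\D(\lambda_i)$ with the canonical $\W$-valued duality pairing between $\D(A_i)$ and $\D(A_i^{\vee})$ produces a perfect $\W$-bilinear pairing $\langle\cdot,\cdot\rangle_i\colon\D(A_i)\times\D(A_i)\to\W$. The relation $\lambda_i\circ x=\bar x\circ\lambda_i$ makes this pairing $O_{\K}$-Hermitian, $\langle xm,n\rangle_i=\langle m,\bar x n\rangle_i$, and since $\D(A_i)$ is free of rank one over $S$, the pairing must be of the form $\langle m,n\rangle_i=\mathrm{Tr}_{S/\W}(\xi_i m\bar n)$ for a unique $\xi_i\in S$ with $\bar\xi_i=\xi_i$; principality of $\lambda_i$ forces $\xi_i\in S^{\times}$. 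Feeding $F=a_i\mathrm{Fr}$, $V=b_i\mathrm{Fr}^{-1}$, and $a_i b_i^{\mathrm{Fr}}=p$ into the standard compatibility $\langle Fm,n\rangle_i=\langle m,Vn\rangle_i^{\mathrm{Fr}}$ rearranges to the single-variety relation $(b_i\bar b_i)^{\mathrm{Fr}}\xi_i^{\mathrm{Fr}}=p\xi_i$.

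Second, I would apply $\D$ to the definition of $\langle f,g\rangle$. Naturality of the $A/A^{\vee}$ duality under $\D(g^{\vee})$ gives the adjunction
\[
\langle \D(f)(m),\D(g)(n)\rangle_2=\langle m,\langle g,f\rangle\cdot n\rangle_1,\qquad m,n\in\D(A_1).
\]
Writing $\D(f),\D(g)$ as multiplication by elements of $S$, each side becomes the trace of an explicit element of $S$, and non-degeneracy of $\mathrm{Tr}_{S/\W}$ pins down the Hermitian form on $L(A_1,A_2)\subseteq S$ as $(u,v)\mapsto(\xi_2/\xi_1)\,u\bar v$. Setting $\xi:=\xi_2/\xi_1$ therefore yields the stated formula $\langle s_1,s_2\rangle=\xi s_1\bar s_2$; conditions (1) and (2) are inherited directly from $\bar\xi_i=\xi_i$ and $\xi_i\in S^{\times}$. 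Condition (3) drops out by dividing the $i=2$ instance of $(b_i\bar b_i)^{\mathrm{Fr}}\xi_i^{\mathrm{Fr}}=p\xi_i$ by the $i=1$ instance: the factor $p$ cancels and the quotient rearranges to exactly $(b_1\bar b_1)^{\mathrm{Fr}}\xi=\xi^{\mathrm{Fr}}(b_2\bar b_2)^{\mathrm{Fr}}$.

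The main delicate step is keeping straight the interplay between the canonical $A_i/A_i^{\vee}$ duality pairing, the twist imposed by $\lambda_i\circ x=\bar x\circ\lambda_i$ on the $O_{\K}$-action on $\D(A_i^{\vee})$, and the Frobenius--Verschiebung intertwining; once these conventions are fixed, the remaining verification is a short formal computation on the rank-one $S$-module $S$.
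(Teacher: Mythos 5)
Your argument is essentially the paper's: express each polarization as a trace pairing $\mathrm{tr}(\zeta_i\,m\bar n)$ on $\D(A_i)\cong S$, extract the relation $p\zeta_i=(\zeta_i b_i\overline{b_i})^{\mathrm{Fr}}$ from the compatibility of the pairing with $F$ and $V$, and set $\xi=\zeta_2/\zeta_1$. Two of your intermediate normalizations are off, though neither affects the conclusion: the polarization pairing is alternating, so $\bar\zeta_i=-\zeta_i$ rather than $\bar\zeta_i=\zeta_i$ (the ratio is still fixed by conjugation since the signs cancel), and principality forces $\zeta_iS=\mathfrak{D}^{-1}S$ with $\mathfrak{D}$ the different of $\K$, rather than $\zeta_i\in S^{\times}$ --- again the ratio is still a unit, so conditions (1) and (2) survive.
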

\begin{proof}
By polarization we have $\W$-symplectic maps $\lambda_i: S \times S 
\rightarrow \W$ ($i = 1,2$) satisfying $\lambda_i(sx,y) = \lambda_i(x,\bar{s}y)$, $\lambda_i(F x,y) = \lambda_i(x,Vy)^{\text{Fr}}$
so we can find $\zeta_i \in S \otimes \mathbb{Q}$ such that $
\lambda_i(s_1,s_2) = \text{tr}_{\K/\qp} (\zeta_i s_1 \bar{s_2})$ such 
that $$\bar{\zeta} = -\zeta, \; p\zeta_i = (\zeta_ib_i
\overline{b_i})^{\text \normalfont{Fr}},\; \zeta_i S = \mathfrak{D}^{-1} S$$
where $\mathfrak{D}$ is the different of $\K$,
so that $\xi = \zeta_1^{-1} \zeta_2$ has the above properties.
\end{proof}
\begin{proposition}
For some $\beta \in \F^{\times}$ satisfying
\[\beta O_\K =  \begin{cases} 
      \mathfrak{p}_\F O_\K & \text{if}\; \K/\F\;  is \; \text{unramified} \\
      O_\K & \text{if}\; \K/\F\; is \; \text{ramified}\; \\ 
   \end{cases}
\]
we have $L(A_1,A_2) \cong O_\K$ with $\langle x,y \rangle = \beta x\bar{y}$ on $O_\K$.
\end{proposition}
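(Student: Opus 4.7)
The plan is to use the explicit description of $L(A_1, A_2)$ in Lemma \ref{lemma2.2} together with the idempotent decomposition $S = \prod_{\psi} e_\psi S$ to exhibit a generator $s_0$ of $L(A_1, A_2)$ as a free $O_\K$-module of rank one, and then compute $\beta := \langle s_0, s_0 \rangle = \xi s_0 \overline{s_0}$ directly. First I would rewrite the condition of Lemma \ref{lemma2.2} in the form $s^{\text{Fr}} = s \cdot (b_2^{\text{Fr}}/b_1^{\text{Fr}})$ in $S \otimes \mathbb{Q}$. Once a single nonzero $s_0 \in L(A_1, A_2)$ is produced with $\min_\psi \text{ord}_\psi(s_0) = 0$, every solution is of the form $s_0 \cdot t$ with $t \in (S \otimes \mathbb{Q})^{\text{Fr}} = \K$, and the requirement $s_0 t \in S$ together with the normalization of $s_0$ forces $t \in O_\K$; so $L(A_1, A_2)$ is cyclic over $O_\K$.

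Passing to the component $e_\psi S$ and using $\text{ord}_\psi(b_i) = m(\psi, \Phi_i)$, the equation $b_1^{\text{Fr}} s^{\text{Fr}} = s b_2^{\text{Fr}}$ becomes the valuation recursion
\[
\text{ord}_{\text{Fr} \circ \psi}(s) - \text{ord}_\psi(s) = d(\psi), \qquad d(\psi) := m(\psi, \Phi_1) - m(\psi, \Phi_2).
\]
Because $\Phi_1, \Phi_2$ differ only in $\phi^{sp} \in \Phi_1$ and $\overline{\phi^{sp}} \in \Phi_2$, the function $d$ takes the value $+1$ at $\psi_0 := \phi^{sp}|_{O_{\K}^u}$, the value $-1$ at $\overline{\psi_0}$, and vanishes elsewhere; the consistency requirement $\sum_\psi d(\psi) = 0$ around the single Frobenius orbit of embeddings is therefore automatic. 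Together with the normalization $\min_\psi \text{ord}_\psi(s_0) = 0$, this determines $(\text{ord}_\psi(s_0))_\psi$ uniquely and pins down $s_0$ up to $O_\K^\times$.

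With $s_0$ in hand I would set $\beta = \xi s_0 \overline{s_0}$. Property (3) of $\xi$ from Lemma \ref{lemma2.2} combined with the defining equation for $s_0$ yields $\beta^{\text{Fr}} = \beta$ (so $\beta \in O_\K$), and property (1) gives $\overline{\beta} = \beta$ (so $\beta \in O_\F$); non-vanishing follows from $\xi \in S^{\times}$ and from $s_0$ being nonzero componentwise. Under the identification $L(A_1, A_2) \cong O_\K$ sending $s_0 \mapsto 1$ the form becomes $\langle x, y\rangle = \beta x \overline{y}$, so it remains to determine $\text{ord}_\F(\beta)$, which via the embedding $O_\F \hookrightarrow e_\psi S$ satisfies both
\[
\text{ord}_\psi(\beta) = e(\K/\F) \cdot \text{ord}_\F(\beta) \qquad \text{and} \qquad \text{ord}_\psi(\beta) = \text{ord}_\psi(s_0) + \text{ord}_{\overline{\psi}}(s_0).
\]

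The final step is a case analysis on ramification. In the unramified case $\overline{\psi} \neq \psi$ for every $\psi$, complex conjugation acts on $O_\K^u$ as $\text{Fr}^{f_\F}$ with $f_\F := [\F^u:\qp]$, so $\overline{\psi_0}$ sits exactly halfway around the Frobenius orbit from $\psi_0$; a short bookkeeping of the recursion on this orbit yields $\text{ord}_\psi(s_0) + \text{ord}_{\overline{\psi}}(s_0) = 1$ independently of $\psi$, whence $\text{ord}_\F(\beta) = 1$ and $\beta O_\K = \mathfrak{p}_\F O_\K$. In the ramified case $\K^u = \F^u$, so $\overline{\psi} = \psi$ for every $\psi$ and both $\phi^{sp}, \overline{\phi^{sp}}$ restrict to the same $\psi_0$, forcing $d \equiv 0$; then $s_0$ is a unit in $S$, $\text{ord}_\psi(\beta) = 0$ everywhere, and $\beta O_\K = O_\K$. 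The delicate point is the unramified case: one must verify that $\text{ord}_\psi(\beta)$ comes out constant in $\psi$ as required by $\beta \in O_\F$, and this is precisely where the antipodal position of $\overline{\psi_0}$ relative to $\psi_0$ on the Frobenius orbit is used.
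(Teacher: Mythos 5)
Your proof is correct and, for the main computation, coincides with the paper's: both reduce to the valuation recursion $\ord_{\text{Fr}\circ\psi}(s)-\ord_{\psi}(s)=m(\psi,\Phi_1)-m(\psi,\Phi_2)$ on the components $e_{\psi}S$, and both conclude by the same dichotomy --- in the ramified case the two special embeddings restrict to the same $\psi_0$, so the generator is a unit of $S$ and $\beta O_\K=O_\K$, while in the unramified case $\overline{\psi_0}=\text{Fr}^{d/2}\circ\psi_0$ sits antipodally on the Frobenius orbit and $\ord_{\psi}(s_0)+\ord_{\overline{\psi}}(s_0)=1$ for every $\psi$, which is exactly the content of the paper's table. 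Where you genuinely differ is in establishing that $L(A_1,A_2)$ is free of rank one over $O_\K$: the paper gets $L(A_1,A_2)\otimes\qp\cong\K$ from supersingularity of the $A_i$, the identification of $\text{End}(A_i)\otimes\qp$ with a matrix algebra over the quaternion division algebra, and Noether--Skolem, whereas you argue purely inside $S$, dividing any two solutions of the semilinear equation to land in $(S\otimes\mathbb{Q})^{\text{Fr}}=\K$ and using the normalization $\min_{\psi}\ord_{\psi}(s_0)=0$ to force the ratio into $O_\K$. Your route is more elementary and self-contained; its one unstated ingredient is the existence of a nonzero $s_0$ with the prescribed valuations, which requires not only the consistency $\sum_{\psi}d(\psi)=0$ around the orbit but also solvability of the resulting unit equation $u^{\text{Fr}}=u\cdot(\text{unit})$ in $\W_\K^{\times}$ (a Lang-type surjectivity, valid because the residue field is algebraically closed); the paper's isogeny argument supplies the nonvanishing of $L(A_1,A_2)$ for free. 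With that existence point made explicit, your argument is complete.
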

\begin{proof}
First of all, as both $A_1,A_2$ are supersingular, we have an isogeny $A_2 \rightarrow A_1$, also that $\text{End} A_i = M_n(H)$, where $H$ is the quaternion division algebra over $\qp$. Now by Noether-Skolem, we can change this isogeny by some $h\circ f$ for $h \in M_n(H)$ so that it becomes $O_\K$-linear, so:
$$L(A_1,A_2) \otimes \qp \cong \text{Hom}_{O_\K}(A_1,A_1) \otimes \qp \cong \K$$
Consequently $L(A_1,A_2)$ is free of rank 1 over $O_\K$ (freeness is trivial from the definition).\\
Let $s$ be an $O_\K$-module generator of $L(A_1,A_2)$, such that $\beta = \xi s \bar{s}$ ($\xi$ is as in Lemma \ref{lemma2.2}), so that $\xi S = S$.\\
Now we have to determine $s\bar{s}S$.
Suppose that $d = [\K^{u}:\qp]$, and $$\lbrace \psi^0, \psi^1, \cdots, \psi^{d-1} \rbrace$$ be the set of embeddings $O_\K^u \rightarrow W$ such that $\text{Fr} \circ \psi^i = \psi^{i+1}$ (in a cyclic way). Now the relation $(b_1 s)^{\text{Fr}} = b_2^{\text{Fr}} s$ implies that
\begin{equation}
\ord_{\psi^{i+1}}(s) = \ord_{\psi^i}(s) - \ord_{\psi^i}(b_2) + \ord_{\psi^i}(b_1) =\\
\ord_{\psi^i}(s) - m(\psi^i, \Phi_2) + m(\psi^i, \Phi_1)
\end{equation}\\
Now we have two cases:\\
$\bullet$ If $\K/\F$ is ramified (same as $\K^u \subseteq \F$) then $x \mapsto \bar{x}$ acts trivially on $\K^u$, so both $\phi^{sp}$ and $\overline{\phi^{sp}}$ restrict to the same $\psi^i$ on $O_\K^u$ (call it $\psi^0$), so by the formula above, $\ord_{\psi^{i+1}}(s) = \ord_{\psi^i}(s)$, also $s$ is a generator, so $\ord_{\psi^i}(s) = 0$ for all $i$, so that $s \in S^{\times}$ and $\beta S = \xi s\bar{s} S = S$  \\
$\bullet$ If $\K/\F$ is unramified, then there are $i \neq  j$ not equal such that $\phi^{sp}|_{\K^u} = \psi^i,\; \overline{\phi^{sp}}|_{\K^u} = \psi^j$, now $\text{Fr}^{j-i}$ gives us the conjugation $x \mapsto \bar{x}$ on $\K^u$, as this is an involution, we have $j-i = \frac{d}{2} \mod d$, also as $s$ is a generator we have $\ord_\nu(s) = 0$ for some $\nu$ so that we can compute $\ord_{\psi^r}(s)$ for all $r$ to get the table above.

$$\ord_{\psi^r}(s\bar{s}) = \ord_{\psi^r}(s) + \ord_{\psi^{r+j-i}}(s) = 1$$
so $\beta S = s\bar{s} S =  \mathfrak{p}_\F S$
\end{proof}
Now suppose that $\tilde{\K}$ is a local field containing the normal closure of $\K/\qp$ and let $\tilde{\W}$ be the ring of integers of the completion of the maximal unramified extension of $\tilde{\K}$.
Let $\m \subset \tilde{\W}$ be the maximal, $R^{(k)} := \tilde{\W}/\m^k$, there's a unique deformation (by Grothendieck-Messing) $(A_i^{(k)}, \kappa_i^{(k)}, \lambda_i^{(k)})$ of $(A_i, \kappa_i, \lambda_i)$ to $R^{(k)}$. $$L^{(k)}(A_1,A_2) = \text{Im}(\text{Hom}_{O_\K}(A_1^{(k)}, A_2^{(k)}) \rightarrow \text{Hom}_{O_\K}(A_1,A_2)) \subseteq L(A_1,A_2)$$\\
From now on we assume the following ramification condition:\\
($\ast$) If $p \leq [\tilde{\K}:\phispec(\K)] + 1$ then the ramification index of $\tilde{\K}/\qp$ is less than $p$.
\begin{theorem}
\label{Thm2.4}
$f \in L(A_1,A_2)$ with $\langle f,f\rangle = \alpha$ can be lifted to $L^{(k)}$ but not to $L^{(k+1)}$, where 
$$k = \frac{1}{2}\ord_{O_{\tilde{\K}}}(\alpha \mathfrak{p}_\F)$$
\end{theorem}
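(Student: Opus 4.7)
I would proceed by translating the lifting problem into Dieudonné theory via Grothendieck--Messing, whose applicability in our range of deformations is exactly what the ramification condition $(\ast)$ guarantees. Specifically, $f \in L(A_1,A_2)$ lifts to $L^{(k)}(A_1,A_2)$ if and only if the induced map $\mathcal{D}_1^{(k)}(R^{(k)}) \to \mathcal{D}_2^{(k)}(R^{(k)})$ carries the Hodge filtration $J_{\Phi_1}\mathcal{D}_1^{(k)}(R^{(k)})$ into $J_{\Phi_2}\mathcal{D}_2^{(k)}(R^{(k)})$; equivalently, the induced composition $\jphi{k}$ vanishes. Both Hodge filtrations are the canonical CM filtrations determined by the $\Phi_i$, and are uniquely pinned down on each $R^{(k)}$ by the CM structure, so the question becomes a purely linear-algebraic one on $S\otimes_{\W} R^{(k)}$.

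Using Lemma 2.1 to identify $\mathcal{D}(A_i)\cong S$, together with the decomposition $S=\prod_\psi e_\psi S$ into DVR components $e_\psi S \cong \W_\K$, I would describe the Hodge filtrations component by component. On each $e_\psi S$, the mod-$p$ filtration is generated by a uniformizer power of order $m(\psi,\Phi_i)$, and the Grothendieck--Messing CM lift gives an explicit generator of $J_{\Phi_i}\mathcal{D}_i^{(k)}(R^{(k)})$ in $e_\psi S\otimes_{\W} R^{(k)}$. Under the embedding $L(A_1,A_2)\hookrightarrow S$ from the discussion above Lemma~\ref{lemma2.2}, the map $f$ acts on the crystals as multiplication by the element $s \in S$ arising in the Proposition.

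Because $\Phi_1$ and $\Phi_2$ differ only at $\phispec$, the filtrations $J_{\Phi_1}$ and $J_{\Phi_2}$ coincide in every idempotent component $e_\psi S$ whose index $\psi$ is not the restriction of $\phispec$ or of $\overline{\phispec}$ to $O_\K^u$; in those components multiplication by $s$ trivially respects the filtrations and contributes no obstruction. At the ``special'' component(s) the obstruction is an explicit element of $e_\psi S\otimes_{\W} R^{(k)}$ whose vanishing is a divisibility condition on $s$ with respect to a uniformizer $\tilde\pi$ of $\tilde\W$. In the ramified case a single special component $\psi^0$ contributes directly, while in the unramified case two conjugate components contribute, involving $s$ in one and $\overline{s}$ in the other. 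Combining with $\alpha = \beta s\bar s$ from the Proposition, using $\beta O_\K = \mathfrak{p}_\F O_\K$ (unramified case) resp. $\beta O_\K = O_\K$ (ramified case), and converting the valuation at $\tilde\pi$ to $\text{ord}_{O_{\tilde\K}}$ via the ramification index $e_{\mathfrak{p}_\F}$ of $\phispec(\mathfrak{p}_\F)$ in $\tilde\K/\F$, both cases uniformly produce $k=\tfrac{1}{2}\text{ord}_{O_{\tilde\K}}(\alpha\mathfrak{p}_\F)$.

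The main obstacle is the valuation bookkeeping: one must simultaneously track the behaviour of conjugation on $\K^u$ (trivial when $\K/\F$ is ramified, swapping two components otherwise), the dichotomy in $\beta$ supplied by the Proposition, and the conversion between valuations in $\tilde\W$ and those in $\tilde\K$ through the ramification of $\tilde\K/\F$. Condition $(\ast)$ enters essentially, since it is precisely what makes Grothendieck--Messing apply to lifts of the required Artinian length.
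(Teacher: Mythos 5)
Your overall strategy --- reduce lifting to the vanishing of an induced map of Hodge filtrations via Grothendieck--Messing and then do valuation bookkeeping on $S$ --- is the right framework, but as written it has a genuine gap: you treat lifting to $L^{(k)}$ as a single linear-algebra problem on $S\otimes_{\W}R^{(k)}$ for arbitrary $k$. Grothendieck--Messing functoriality is only available across PD-thickenings, and $R^{(b)}\to R^{(a)}$ carries the needed divided power structure only when $b\le a+\varepsilon+1$; this bound is exactly where the ramification condition $(\ast)$ enters, and it does \emph{not} make arbitrarily long thickenings of the special fibre accessible. So for $k$ beyond roughly $\varepsilon+1$ you cannot identify $\mathcal{D}_i(R^{(k)})$ with $S\otimes_{\W}R^{(k)}$ relative to $A_i$ over $\fpbar$, and your ``one-shot, purely linear-algebraic'' criterion is not defined. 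The paper circumvents this by an induction on the $O_{\K}$-order of $f$: a base case for a generator $f_0$ (which lifts exactly to level $\varepsilon$ in the unramified case and to level $\varepsilon\,\ord_{O_\K}\mathfrak{D}_{\K/\F}$ in the ramified case), and an induction step showing that $\pi_{\K}\cdot$ raises the maximal lifting level by exactly $\varepsilon$. That step rests on the identity $(x\otimes 1-1\otimes\phispbar(x))J_{\Phi_1}\subseteq J_{\Phi_2}$, which lets one replace multiplication by $\pi_{\K}$ on the obstruction map by multiplication by the scalar $\phispbar(\pi_{\K})$. Your proposal contains no mechanism playing the role of this induction step, and without it the passage from a generator to a general $f=\pi_{\K}^m f_0$ is unjustified.

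A second, smaller issue: in the ramified case the generator corresponds to a unit $s\in S^{\times}$ (all $\ord_{\psi}(s)=0$), so the obstruction is \emph{not} a divisibility condition on $s$. There the obstruction is governed by the failure of $\phispec$ and $\phispbar$ to agree modulo $\m^{k}$, i.e.\ by the relative different $\mathfrak{D}_{\K/\F}$, and one needs tameness (supplied by the ramification hypothesis, which excludes $p=2$ here) to get $\ord_{O_\K}\mathfrak{D}_{\K/\F}=1$ and hence the stated value of $k$. Reading the obstruction off from $\ord_{\psi}(s)$ at the special component, as you propose, would give the wrong answer in that case.
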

We prove this by induction on the order of $f$ in $L(A_1,A_2) \cong O_\K$.

\begin{table}[]
\begin{tabular}{|l|l|l|l|l|l|l|l|l|l|l|l|l|}
\hline
$r$              & 0 & 1 & $\cdots$ & $i-1$ & $i$ & $i+1$ & $\cdots$ & $j-1$ & $j$ & $j+1$ & $\cdots$ & $d-1$ \\ \hline
$\ord_{\psi^r}s$ & 0 & 0 & 0        & 0     & 0   & 1     & 1        & 1     & 1   & 0     & 0        & 0     \\ \hline
\end{tabular}
\end{table}

First we prove a lemma about PD-thickening of $R^{(k)}$'s, let $\varepsilon$ be the ramification index of $\tilde{\K}/\phispec(\K)$.
\begin{lemma}
If $a \leq b \leq a+\varepsilon+1$, then $R^{(b)} \rightarrow R^{(a)}$ is a PD-thickening.
\end{lemma}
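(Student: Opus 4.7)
The kernel of $R^{(b)} \twoheadrightarrow R^{(a)}$ is the ideal $J = \m^a/\m^b \subset R^{(b)}$, so the task is to equip it with divided powers $\{\gamma_n\}$. The plan is to split along the two alternatives permitted by the ramification hypothesis $(\ast)$: either $e := e_{\tilde{\K}/\qp} \leq p-1$, or $p > [\tilde{\K}:\phispec(\K)]+1$.

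In the first case, the standard formula $\gamma_n(x) := x^n/n!$ already defines a PD-structure on the full maximal ideal $\m \subset \tilde{\W}$: for $x \in \m$,
\begin{equation*}
v_\pi(x^n/n!) = n\,v_\pi(x) - e\,v_p(n!) \geq n\,v_\pi(x) - (n-1) \geq 1,
\end{equation*}
using $v_p(n!) \leq (n-1)/(p-1)$ together with $e \leq p-1$. The same estimate shows that $\m^b$ is a sub-PD-ideal, so the structure descends to $J$ inside $R^{(b)}$.

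In the second case, since $[\tilde{\K}:\phispec(\K)] \geq \varepsilon$, one has $p \geq \varepsilon+2$, hence $(p-1)a \geq p-1 \geq \varepsilon+1 \geq b-a$ for any $a \geq 1$, giving $pa \geq b$. Consequently every $x \in J$ satisfies $x^p = 0$ in $R^{(b)}$. Since $n!$ is a unit in $R^{(b)}$ for $1 \leq n \leq p-1$, I would define
\begin{equation*}
\gamma_n(x) := \frac{x^n}{n!} \quad (1 \leq n \leq p-1), \qquad \gamma_n(x) := 0 \quad (n \geq p),
\end{equation*}
with $\gamma_n(x) \in \m^{na}/\m^b \subseteq J$. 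For $n < p$ the PD-axioms reduce to universal identities valid over $\mathbb{Z}[(p-1)!^{-1}]$; for $n \geq p$ both sides of each axiom vanish, because every monomial in elements of $J$ of total degree $\geq p$ has $\pi$-valuation $\geq pa \geq b$ and thus dies in $R^{(b)}$.

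The main point requiring care is the sum axiom $\gamma_n(x+y) = \sum_{i+j=n} \gamma_i(x) \gamma_j(y)$ in the second case at $n \geq p$: the potentially surviving terms $x^i y^j/(i!j!)$ with $i,j < p$ must each vanish in $R^{(b)}$, which they do because $v_\pi(x^i y^j) \geq (i+j)a = na \geq pa \geq b$. The composition axiom $\gamma_m(\gamma_n(x)) = \tfrac{(mn)!}{m!(n!)^m}\gamma_{mn}(x)$ in the range $m,n<p$ with $mn\geq p$ is handled analogously, since $v_\pi(x^{mn}) \geq mn\,a \geq pa \geq b$ forces both sides to be zero.
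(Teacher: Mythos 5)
Your construction is correct as far as it goes, and it takes a genuinely different route from the paper's: the paper splits on whether $\m^a \subseteq (p)$ (if so, it restricts the canonical divided powers of $(p)/\m^b$ to $\m^a/\m^b$ and estimates valuations; if not, it shows $((p)/\m^b)^2=0$ and uses $x\mapsto x^n/n!$), whereas you split directly along the two alternatives of the ramification hypothesis $(\ast)$. Your first case ($e\leq p-1$) is clean and in fact proves more, namely that $R^{(b)}\to R^{(a)}$ is a PD-thickening for \emph{all} $a\leq b$, with no constraint involving $\varepsilon$; your second case is a careful and correct verification of the axioms for a $p$-truncated divided power structure, and the key inequality $pa\geq b$ is correctly extracted from $p\geq \varepsilon+2$ and $a\geq 1$.

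There is, however, one missing verification. In this context ``PD-thickening'' must mean divided powers \emph{compatible with the canonical divided powers} $px\mapsto \frac{p^n}{n!}x^n$ on $(p)/\m^b$ --- the paper makes this part of the definition, and it is what allows the crystals $\mathcal{D}_i$ to be evaluated on $R^{(b)}\to R^{(a)}$ and identified with $S\otimes_\W R^{(b)}$ in the propositions that follow. In your first case compatibility is automatic, since your $\gamma_n$ is given by the same formula $x^n/n!$ on all of $\m/\m^b$. In your second case it is not: on $J\cap (p)R^{(b)}$ and for $n\geq p$ you have decreed $\gamma_n=0$, whereas the canonical structure gives $\frac{p^n}{n!}x^n$, which need not vanish in a general quotient (for instance with $p=3$, $e=1$, $a=1$, $b=3$ one has $\frac{p^3}{3!}x^3=\frac{9}{2}x^3\neq 0$ in $\W/\m^3$ for $x$ a unit, so the truncated structure there is a valid but \emph{non-compatible} PD structure). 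The argument is salvageable because in the regime where your second case is actually forced --- namely when the first alternative fails, so the absolute ramification index $e$ of $\tilde{\K}$ satisfies $e\geq p$ --- one can check that $\ord_{\pi}\bigl(\frac{p^n}{n!}x^n\bigr)\geq b$ for all $px\in \m^a$ and all $n\geq p$, so the two structures do agree on the intersection; but this estimate is an essential step and is absent from your write-up. The cleanest fix is to take the second case to be ``$e\geq p$'' (rather than ``$p>[\tilde{\K}:\phispec(\K)]+1$''), which is the true complement of the first case under $(\ast)$, and then add the compatibility computation using that inequality.
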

\begin{proof}
Its kernel is equal to $I = \m^a/\m^b$. To prove that this is a PD-thickening we have to (by definition) find some functions $\gamma_n: \m^a/\m^b \rightarrow \m^a/\m^b$ for $n > 0$ that behave like $\frac{x^n}{n!}$ such that they are the maps $\gamma_n : px \mapsto \frac{p^n}{n!}x^n$ when restricted to $\frac{(p)}{\m^b}$. \\
We know that $\m = (\pi)$ for some uniformizer $\pi$ of $\tilde{\K}$. Suppose that $(p) = (\pi)^{r\varepsilon}$ where $r$ is the ramification index of $\K/\qp$. Now we have two cases:\\
$\bullet$ $a \geq r\varepsilon$ then $I \subseteq (p)/\m^b$, so that we can put canonical divided powers $\gamma_n: px \rightarrow \frac{p^n}{n!}x^n$. Now to prove that $\gamma_n$  is valued in $I$, we compute the the order of its values (Suppose that $px \in I$):
$$\ord_{\pi}(\frac{p^n}{n!}x^n) = n\; \ord_{\pi}(px) - \ord_{\pi}(n!) \geq na - \ord_{\pi}n! = na - r\varepsilon(\left \lfloor{\frac{n}{p}}\right \rfloor + \cdots ) =$$ $$=a + (n-1)a - r\varepsilon(\left \lfloor{\frac{n}{p}}\right \rfloor + \cdots ) \geq a + r\varepsilon(n-1 - \left \lfloor{\frac{n	}{p}}\right \rfloor - \cdots) \geq a + r\varepsilon(n-1 - \frac{n-1}{p-1}) \geq a$$
$\bullet$ $a < r\varepsilon$, By the statement of the lemma and the case we are considering, we have
$$2r\varepsilon + 1 \geq r\varepsilon + \varepsilon + 1 > a + \varepsilon + 1 \geq b \implies 2r\varepsilon \geq b$$
so that
$$2r\varepsilon \geq b \Leftrightarrow \m^{2r\varepsilon}/(\m^b \cap \m^{2r\epsilon}) = 0 \Leftrightarrow ((p)/\m^b)^2 = 0 $$
so we can still use the divided power structure $\gamma_n: \pi x \mapsto \frac{\pi^n}{n!}x^n$, because on the ideal $(p)/\m^b$ we just need $\gamma_1$ which is the identity and $\gamma_i$ for $i \geq 2$ have to be zero maps on $(p)/\m^b$ (the fact that images of $\gamma_n$'s lie in $I$ uses ramification condition $(\ast)$).
\end{proof}
 First we prove the base case:
\begin{proposition}
If $f$ is an $O_\K$-module generator of $L(A_1,A_2)$ and $\mathfrak{D}_{\K/\F}$ be the relative different of $\K/\F$.\\
1) If $\K/\F$ is unramified, $f$ is in $L^{(\varepsilon)}(A_1,A_2)$ but not in $L^{(\varepsilon+1)}(A_1,A_2)$.\\
2) If $\K/\F$ is ramified, $f$ is in $L^{(k)}(A_1,A_2)$ but not in $L^{(k+1)}(A_1,A_2)$ where $k = \varepsilon\text{\normalfont ord}_{O_\K} \mathfrak{D}_{\K/\F}$. 
\end{proposition}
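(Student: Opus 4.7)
The plan is to apply Grothendieck--Messing deformation theory step by step, using the preceding lemma to ensure that each transition $R^{(k+1)} \to R^{(k)}$ is a PD-thickening. Under this setup, $f$ lifts from $L^{(k)}$ to $L^{(k+1)}$ iff the induced map $\tilde f : \D(A_1) \otimes R^{(k+1)} \to \D(A_2) \otimes R^{(k+1)}$ carries the Hodge filtration $\omega_1^{(k+1)}$ of the unique CM lift $A_1^{(k+1)}$ into $\omega_2^{(k+1)}$. The problem therefore reduces to writing down these filtrations explicitly and computing the $\m$-adic valuation of the resulting obstruction.

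Concretely, I would decompose $S \otimes_{\W} \tilde{\W} \cong \prod_{\psi} S_{\psi}$ with $S_{\psi} = O_{\K} \otimes_{O_{\K}^{u},\psi} \tilde{\W}$, using the idempotents $e_{\psi}$. Since $A_i^{(k)}$ is the unique CM lift determined by the $\Phi_i$-determinant condition, $\omega_i^{(k)}$ decomposes compatibly with this product. Because $\Phi_1, \Phi_2$ differ only by the swap $\phispec \leftrightarrow \overline{\phispec}$, the two filtrations agree on every $\psi$-component except those lying over $\phispec|_{O_{\K}^{u}}$. Writing $f$ as multiplication by its generator $s \in S$ (satisfying $(b_1 s)^{\text{Fr}} = s\, b_2^{\text{Fr}}$), the obstruction localizes to these distinguished factors, and the lifting condition becomes the inclusion $s \cdot \omega_1^{(k+1)} \subseteq \omega_2^{(k+1)}$ inside a single component (in the ramified case) or two symmetric components (in the unramified case).

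In the unramified case, $\phispec|_{O_{\K}^{u}}$ and $\overline{\phispec}|_{O_{\K}^{u}}$ are the distinct embeddings $\psi^{i}, \psi^{j}$, and the valuations of $s$ along the cycle are precisely those tabulated. The two filtrations in these factors are spanned by lifts of $\phispec$ and $\overline{\phispec}$ whose difference in $\tilde{\W}$ has $\m$-adic valuation equal to $\varepsilon$, the ramification index of $\tilde{\K}/\phispec(\K)$; a direct calculation then shows the obstruction vanishes through level $\varepsilon$ but not beyond. In the ramified case, $\phispec|_{O_{\K}^{u}} = \overline{\phispec}|_{O_{\K}^{u}} = \psi^{0}$ and everything concentrates in the single component $S_{\psi^{0}}$, which is itself a ramified DVR over $\tilde{\W}$. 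Here $s$ is a unit in $S$, but the two filtrations differ by an element whose valuation is governed by the relative different $\mathfrak{D}_{\K/\F}$: the involution $x \mapsto \bar x$ fixes $O_{\F}$ but acts nontrivially on $O_{\K}$, so the discrepancy between the $\phispec$- and $\overline{\phispec}$-directions inside $S_{\psi^{0}}$ is (up to a unit) a generator of $\mathfrak{D}_{\K/\F}$, pulled to $\tilde{\W}$ via $\phispec$ and thereby picking up the factor $\varepsilon$. The main obstacle is precisely this last identification, i.e., verifying that the obstruction in $S_{\psi^{0}}$ is a unit multiple of a generator of $\mathfrak{D}_{\K/\F}$ transported through $\phispec$; once this is in place, the valuation count $k = \varepsilon \cdot \ord_{O_{\K}} \mathfrak{D}_{\K/\F}$ falls out from the formulas of Lemma \ref{lemma2.2}.
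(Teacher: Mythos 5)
Your overall framework for part 1 matches the paper's: reduce to the Grothendieck--Messing criterion over the PD-thickening $R^{(\varepsilon)}\to R^{(1)}$, decompose along the idempotents $e_\psi$, and observe that the obstruction concentrates in the component(s) over $\phispec|_{O_\K^u}$ and $\phispbar|_{O_\K^u}$. But you misplace the source of the valuation $\varepsilon$. In the unramified case the ``difference of the two filtration directions'' is a \emph{unit}: one exhibits $a\in O_\K^u$ with $\bar a\neq a$, and since $\K/\F$ unramified means the residue fields differ, $\psi_0(\bar a-a)^{m(\psi_0,\Phi_1)}$ is a unit of $\tilde{\W}$. The entire valuation $\varepsilon$ of the obstruction comes instead from the generator $s$ itself: the table gives $\ord_{\psi_0}(s)=1$, so $\phispbar(s)$ has $\m$-adic valuation exactly $\varepsilon$ in $\tilde{\W}$. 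Your sentence asserting that the two filtrations differ by an element of valuation $\varepsilon$ is false as stated, even though you cite the correct table; the conclusion survives only because the correct mechanism happens to give the same number.

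The real gap is part 2. You propose to run the same explicit obstruction computation in the single component $S_{\psi^0}$ and you yourself flag the decisive step --- that the obstruction is a unit multiple of $\phispec(\mathfrak{D}_{\K/\F})$ --- as ``the main obstacle,'' without proving it. That step is not routine along your route: the image of the generator $j(\psi^0,x)$ under $\phispbar$ is the full product $\prod_{\phi\in\Phi_1,\ \phi|_{O_\K^u}=\psi^0}(\phispbar(x)-\phi(x))$, and in the ramified case the factors with $\phi\neq\phispec$ are differences of conjugate uniformizers and need not be units, so the naive valuation count does not isolate $\varepsilon\,\ord_{O_\K}\mathfrak{D}_{\K/\F}$. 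Moreover, $k=\varepsilon\,\ord_{O_\K}\mathfrak{D}_{\K/\F}$ can exceed $\varepsilon+1$, so $R^{(k)}\to R^{(1)}$ need not be a PD-thickening and a single evaluation of the crystal of the special fibre does not reach level $k$; one must bootstrap through the deformations $A_i^{(j)}$, whose Hodge filtrations are only identified after each successive lift. The paper avoids all of this with a different argument: since the generator is a unit $s\in S^\times$ in the ramified case, $f$ is an isomorphism of Dieudonn\'e modules, so $A_1\cong A_2$ as $O_\K$-linear $p$-divisible groups and the only discrepancy between the two deformation problems is the determinant condition; these agree modulo $\m^k$ precisely when $\phispec\equiv\phispbar \bmod \m^k$, i.e.\ when $k\le\varepsilon\,\ord_{O_\K}\mathfrak{D}_{\K/\F}$ (here your observation that $x-\bar x$ generates $\mathfrak{D}_{\K/\F}$ is exactly what is used), and uniqueness of CM lifts then produces $f^{(k)}$. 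You should either adopt this softer argument or actually carry out, and justify, the obstruction computation you only sketch.
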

\begin{proof}
Let $J_{\Phi_i}$ be the kernel of the $\tilde{\W}$-algebra  map $O_\K \otimes_{\mathbb{Z}_p} \tilde{\W} \rightarrow \prod_{\phi \in \Phi_i} \cp(\phi)$ (sending $x \otimes 1$ to $(\phi(x))_{\phi \in \Phi_i}$) also let $\mathcal{D}_1,\mathcal{D}_2$ the Grothendieck-Messing crystals of $A_1,A_2$ respectively.\\
By the above lemma, the map $R^{(\varepsilon)} \rightarrow R^{(1)}$ is a PD-thickening so that:
$$\mathcal{D}_1(R^{(\varepsilon)}) = S \otimes_\W R^{(\varepsilon)}$$
$$\mathcal{D}_2(R^{(\varepsilon)}) = S \otimes_\W R^{(\varepsilon)}$$
by Theorem 2.1.3 of \cite{Howard2012}, Hodge filtrations of these are $J_{\Phi_1}\mathcal{D}_1(R^{(\varepsilon)})$ and $J_{\Phi_2}\mathcal{D}_2({R^{(\varepsilon)}})$, respectively, So $f$ lifts to $A_1^{(\varepsilon)} \rightarrow A_2^{(\varepsilon)}$ iff $$f: J_{\Phi_1}\mathcal{D}_1(R^{(\varepsilon)}) \rightarrow \mathcal{D}_2(R^{(\varepsilon)})/J_{\Phi_2}\mathcal{D}_2(R^{(\varepsilon)})$$ is trivial.

Now we prove parts 1 and 2 of the proposition:\\
1) Suppose that $f$ is $s \in S$ when writing $L(A_1,A_2)$ as a subset of $S$ as before, then we must prove that multiplication by $s$ is zero $\mod \m^{\varepsilon}$ in the first map below: 
\begin{equation}
 J_{\Phi_1}(S \otimes_\W \tilde{\W}) \xrightarrow{\text{s .}}
(S \otimes_\W \tilde{\W})/J_{\Phi_2}(S \otimes_\W 
\tilde{\W}) \xrightarrow{\overline{\phi^{sp}}} \tilde{\W}
 \end{equation}
 \label{eq2.2}
By lemma 2.1.2 of \cite{Howard2012}, we have that
$J_{\Phi_1}$ is generated by $$j(\psi,x) = e_{\psi}\prod_{\substack{\phi \in \Phi_1\\ \phi|{O_{\K}^u} = \psi} } (x \otimes 1 - 1 \otimes \phi(x))$$ for all $x \in O_\K$ and $\psi: O_\K^{u} \rightarrow \W$,
Now under multiplication by $s$, all $j(\psi,x)$ go to $J_{\Phi_2}(S \otimes_\W \tilde{\W})$ except for $j(\psi_0,x)$'s where $\psi_0 = \overline{\phi^{sp}}|_{O_\K^u}$ and $x \in O_\K$, these elements go to 
$$\phispbar(s) \prod_{\substack{\phi \in \Phi_1\\ \phi|{O_{\K}^u} = \psi_0} } (\phispec(\bar{x}) - \phi(x)) \in \tilde{\W}$$
after the last map. Now I claim that there's $a \in O_\K^{u}$ such that 
$$\prod_{\substack{\phi \in \Phi_1\\ \phi|{O_{\K}^u} = \psi_0} } (\phispec(\bar{a}) - \phi(a))$$
 is a unit in $\tilde{\W}$, indeed, by unramifiedness of $\K/\F$ (which is equivalent to $\K^u \nsubseteq \F$) we know that there exists $a \in O_\K^{u}$ such that $\bar{a} \neq a$, now 
$$\phi^{sp}(\bar{a})- \phi(a) = \psi_0(\bar{a}- a)$$
so that
$$j(\psi_0,a)= \psi_0(\bar{a}-a)^{m(\psi_0,\Phi_1)}$$
Thus $a-\bar{a}$ is a unit which implies that $j(\psi_0,a)$ is a unit.\\
Now we have, by the table in proposition 2.3, that $\ord_{\psi_0} s = 1$, and so that the map 2.2 is zero mod $\m^{\varepsilon}$ and nonzero mod $ \m^{\varepsilon+1}$, Now the map $$S \otimes_\W \tilde{\W} / J_{\Phi_2}(S \otimes_\W \tilde{\W}) \xrightarrow{(\phi(x))_{\phi \in \Phi_2}} \cp^n$$ is injective (because we killed the kernel), but the restriction of this map to image of multiplication by $s$ in equation \ref{eq2.2} sends $x$ to $(0,\cdots,0,\phispbar(x),0,\cdots,0)$, so that the image of $\phispbar$ determines everything and we can extend $f$ to $L^{(\varepsilon)}$.

Also, we have that $R^{(\varepsilon + 1)} \rightarrow R^{(1)}$ is a PD-thickening, so that the map $$f: J_{\Phi_1}\mathcal{D}_1(R^{(\varepsilon+1)}) \rightarrow \mathcal{D}_2(R^{(\varepsilon+1)})/J_{\Phi_2}\mathcal{D}_2(R^{(\varepsilon+1)})$$ is just equation \ref{eq2.2} $\mod \m^{\varepsilon + 1}$ which is nonzero so we are not able to extend $f$ to $L^{(\varepsilon+1)}$.\\
2) Suppose that we have extended $f$ to $L^{(k)}$,
in this case we will get that $f$ is equal to some $s \in S^{\times}$, by proposition 1, so $f: D(A_1) \rightarrow D(A_2)$ (these are Dieudonn\'e modules) is an isomorphism, If we check the $\Phi_i$-determinant condition on $D(A_1)/VD(A_1) \rightarrow D(A_2)/VD(A_2)$, we will get $\phi^{sp} = \overline{\phi^{sp}} \mod \m^k$, so $k \leq \varepsilon \text{\normalfont ord}_{O_\K} \mathfrak{D}_{\K/\F}$.\\
Now if $k \leq \varepsilon \text{\normalfont ord}_{O_\K} \mathfrak{D}_{\K/\F}$, then $f:A_1 \rightarrow A_2$ is an isomorphism of $p$-divisible groups with $O_\K$-action and respecting $\Phi_i$-determinant condition up to $\m^k$, now because deformations are unique and above map is an isomorphism we get an $O_\K$-linear map
$${f^{(k)}}: A_1^{(k)} \rightarrow A_2^{(k)}$$
that lifts $f$, so $f \in L^{(k)}(A_1,A_2)$ but not in $L^{(k+1)}(A_1,A_2)$. 
\end{proof}
Now we have the induction step in the next proposition:
\begin{proposition}
Let $\pi_{\K}$ be the uniformizer of $O_{\K}$. If $f \in L^{(k)}$, then $\pi_{\K} f \in L^{(k+\varepsilon)}$, also $\pi_{\K} .: L^{(k)}/L^{(k+1)} \rightarrow L^{(k+\varepsilon)}/L^{(k+\varepsilon+1)}$ is injective.
\end{proposition}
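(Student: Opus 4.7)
The plan is to apply Grothendieck--Messing theory using the fact that $R^{(k+\varepsilon+1)} \twoheadrightarrow R^{(k)}$ is a PD-thickening (which it is by the preceding lemma, since $\varepsilon+1 \leq \varepsilon+1$), and to transport the explicit obstruction computation from Proposition 2.6 into the induction step. First, I would note that $L^{(k)}$ is automatically an $O_\K$-submodule of $L(A_1,A_2)$, since $\Hom_{O_\K}(A_1^{(k)},A_2^{(k)})$ carries an $O_\K$-module structure via post-composition and reduction mod $\m^k$ is $O_\K$-linear; hence $\pi_\K f \in L^{(k)}$ is automatic, and the content is the sharper statement about the levels $k+\varepsilon$ and $k+\varepsilon+1$.

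By Grothendieck--Messing and Theorem 2.1.3 of \cite{Howard2012}, whether $\pi_\K f$ extends from $R^{(k)}$ to $R^{(k')}$ (for $k < k' \leq k+\varepsilon+1$) is controlled by the vanishing of the induced Hodge-filtration map
\[
\pi_\K f : J_{\Phi_1}\mathcal{D}_1(R^{(k')}) \longrightarrow \mathcal{D}_2(R^{(k')})/J_{\Phi_2}\mathcal{D}_2(R^{(k')}).
\]
Following the proof of Proposition 2.6, all generators $j(\psi,x)$ of $J_{\Phi_1}$ with $\psi \neq \psi_0 := \phispec|_{O_\K^u}$ already lie in $J_{\Phi_2}$ (outside $\psi_0$ the defining products for $\Phi_1$ and $\Phi_2$ coincide, since $\Phi_1$ and $\Phi_2$ differ only by the pair $\phispec, \phispbar$, both of which restrict to $\psi_0$), and hence map to zero in the target after multiplication by anything. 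After composing with the $\phispbar$-projection to $\tilde{\W}/\m^{k'}$, the obstruction therefore reduces to the single scalar
\[
\phispbar(\pi_\K f)\cdot \prod_{\substack{\phi \in \Phi_1\\ \phi|_{O_\K^u}=\psi_0}}(\phispbar(a)-\phi(a)) \in \tilde{\W}/\m^{k'},
\]
for an $a \in O_\K$ chosen, as in Proposition 2.6, so that the product factor is a unit in $\tilde{\W}$. This scalar equals $\phispbar(\pi_\K)$ times the analogous obstruction scalar for $f$.

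The arithmetic key is the identity $\ord_{\tilde{\K}}(\phispbar(\pi_\K)) = \varepsilon$. Since conjugation is a $\qp$-automorphism of $\K$, $\phispbar(\K) = \phispec(\K) \subset \tilde{\K}$, and by definition of $\varepsilon$ the extension $\tilde{\K}/\phispec(\K)$ has ramification index $\varepsilon$; so any uniformizer $\pi_\K$ of $\K$ is sent to an element of $\tilde{\K}$-valuation exactly $\varepsilon$. Thus multiplication by $\phispbar(\pi_\K)$ shifts $\m$-adic valuations of obstruction scalars by exactly $\varepsilon$. Since $f \in L^{(k)}$ forces the obstruction scalar for $f$ to lie in $\m^k$, the one for $\pi_\K f$ lies in $\m^{k+\varepsilon}$ and vanishes in $\tilde{\W}/\m^{k+\varepsilon}$, giving $\pi_\K f \in L^{(k+\varepsilon)}$. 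If moreover $f \notin L^{(k+1)}$, the obstruction for $f$ has valuation exactly $k$, so the obstruction for $\pi_\K f$ has valuation exactly $k+\varepsilon$, showing $\pi_\K f \notin L^{(k+\varepsilon+1)}$ and yielding injectivity of $\pi_\K \cdot : L^{(k)}/L^{(k+1)} \to L^{(k+\varepsilon)}/L^{(k+\varepsilon+1)}$.

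The main obstacle I anticipate is uniformity between the ramified and unramified cases: Proposition 2.6 handled the ramified case via the $\Phi_i$-determinant condition rather than this Hodge-filtration obstruction, so I would need to verify that the latter framework, applied uniformly here, detects the correct valuations even when $\K^u = \F^u$ collapses the possible $\psi$'s, and that a suitable $a \in O_\K$ making the $\psi_0$-product a unit in $\tilde{\W}$ can still be exhibited in that case.
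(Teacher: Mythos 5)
Your skeleton is the right one --- the PD-thickening $R^{(k+\varepsilon+1)} \to R^{(k)}$, the identity ``obstruction of $\pi_\K f$ equals $\phispbar(\pi_\K)$ times the obstruction of $f$'', the valuation count $\ord_{\tilde{\K}}(\phispbar(\pi_\K)) = \varepsilon$, and the reduction mod $\m^{k+1}$ for injectivity. But the central step, reducing the obstruction to the single explicit scalar $\phispbar(\pi_\K f)\prod(\phispbar(a)-\phi(a))$, has a genuine gap. That computation lives on $S \otimes_\W R^{(k')}$ with $f$ acting as multiplication by the Dieudonn\'e element $s$, and it is available in Proposition 2.6 only because $R^{(\varepsilon+1)} \to R^{(1)} = \fpbar$ is a PD-thickening, so the crystal of $A_i/\fpbar$ itself can be evaluated there. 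For general $k$ the surjection $R^{(k+\varepsilon+1)} \to \fpbar$ is not a PD-thickening; one must work with the crystals $\mathcal{D}_i^{(k)}$ of the deformations $A_i^{(k)}$, which you have not identified with $S \otimes_\W R^{(k')}$, and on which $f^{(k)}$ is not ``multiplication by $s$'' in any sense you have established (even granting freeness over $O_\K \otimes_{\mathbb{Z}_p} R^{(k')}$, the multiplier is only pinned down mod $\m$, which does not determine the exact valuation you need). There is also a slip: the component carrying the obstruction is $\psi_0 = \phispbar|_{O_\K^u}$, not $\phispec|_{O_\K^u}$; with your choice, in the unramified case every generator $j(\psi_0,x)$ already lies in $J_{\Phi_2}$ and the obstruction you compute is identically zero.

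The paper closes exactly this gap by a purely formal observation that makes the explicit scalar unnecessary: $(x \otimes 1 - 1 \otimes \phispbar(x))J_{\Phi_1} \subseteq J_{\Phi_2}$ for all $x \in O_\K$, hence on the image of \emph{any} $O_\K \otimes_{\mathbb{Z}_p}\tilde{\W}$-linear map $J_{\Phi_1}M \to N/J_{\Phi_2}N$ the $O_\K$-action is scalar multiplication through $\phispbar$. Combined with the commutative diagram comparing the crystal evaluated at $R^{(k+\varepsilon)}$ (resp.\ $R^{(k+\varepsilon+1)}$) with its reduction to $R^{(k)}$ --- which shows the obstruction map of $f^{(k)}$ takes values in $\m^{k}\,\mathcal{D}_2^{(k)}(\cdot)/J_{\Phi_2}\mathcal{D}_2^{(k)}(\cdot)$ --- this gives both assertions uniformly: $\phispbar(\pi_\K)\m^{k} = 0$ in $R^{(k+\varepsilon)}$ yields the lift, and the fact that $\phispbar(\pi_\K)\cdot(\text{image}) = 0$ in $R^{(k+\varepsilon+1)}$ forces the image into $\m^{k+1}$ yields injectivity. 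In particular the ramified/unramified dichotomy you flag as an obstacle at the end never arises in the induction step: no unit-valued product and no choice of $a$ is needed there. As written, your argument rests on an identification that is unavailable for $k \geq 2$ and, by your own admission, leaves the ramified case open.
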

\begin{proof}
First of all, we have if $j \in J_{\Phi_1}$ then
$$(x \otimes 1 - 1 \otimes \overline{\phi^{sp}}(x))j \in J_{\Phi_2} = \ker(O_\K \otimes_{\mathbb{Z}_p} \tilde{\W} \rightarrow \prod_{\phi \in \Phi_2} \cp(\phi))$$
for all $x \in O_\K$, Henceforth, if we have an $O_\K \otimes_{\mathbb{Z}_p} \tilde{\W}$-linear map $f: J_{\Phi_1}M  \rightarrow N/J_{\Phi_2}N$ for $O_\K \otimes_{\mathbb{Z}_p} \wphi$-modules $M,N$, then
$$(x \otimes 1 - 1 \otimes \overline{\phi^{sp}}(x)) \circ f = 0$$ for all $x \in O_\K$.\\
Suppose that $f$ lifts to $f^{(k)}: A_1^{(k)} \rightarrow A_2^{(k)}$, then write $\mathcal{D}_1^{(k)}, \mathcal{D}_2^{(k)}$ for Grothendieck-Messing crystals of $A_1^{(k)}, A_2^{(k)}$, respectively. Now because we have the PD-thickening $R^{(k+\varepsilon)} \rightarrow R^{(k)}$,

\[ \begin{tikzcd}
J_{\Phi_1}\mathcal{D}_1^{(k)}(R^{(k+\varepsilon)}) \arrow{r}{f^{(k)}} \arrow[swap]{d}{} & \mathcal{D}_2^{(k)}(R^{(k+\varepsilon)})/J_{\Phi_2}(\mathcal{D}_2^{(k)}(R^{(k+\varepsilon)})) \arrow{d}{} \\%
J_{\Phi_1}\mathcal{D}_1^{(k)}(R^{(k)}) \arrow{r}{f^{(k)}}& \mathcal{D}_2^{(k)}(R^{(k)})/J_{\Phi_2}(\mathcal{D}_2^{(k)}(R^{(k)}))
\end{tikzcd}
\]
where the bottom row is 0 map, so that the top map becomes zero after $\otimes_{R^{(k+\varepsilon)}} R^{(k)}$, so its image is annihilated by $\m^\varepsilon$, and so $\pi_\K f^{(k)} = \phispbar(\pi_\K) f^{(k)}$ is zero on the top row and so it can be lifted to $L^{(k+\varepsilon)}$

Now suppose that $f \in L^{(k)}$ but not in $L^{(k+1)}$, we have the PD-thickening $R^{(k+\varepsilon + 1)} \rightarrow R^{(k)}$, so we have a map 
$$J_{\Phi_1}\mathcal{D}_{1}^{(k)}(R^{(k+\varepsilon+1)}) \rightarrow \mathcal{D}_2^{(k)}(R^{(k+\varepsilon+1)})/J_{\Phi_2}\mathcal{D}_2^{(k)}(R^{(k+\varepsilon+1)})$$
If $\pi_\K f^{(k)}$ lifts to $L^{(k+\varepsilon+1)}$, then we must have that $$\pi_\K f^{(k)}:J_{\Phi_1}\mathcal{D}_{1}^{(k)}(R^{(k+\varepsilon+1)}) \rightarrow \mathcal{D}_2^{(k)}(R^{(k+\varepsilon+1)})/J_{\Phi_2}\mathcal{D}_2^{(k)}(R^{(k+\varepsilon+1)})$$
is trivial, but by above $\pi_\K f^{(k)} = \overline{\phi^{sp}}(\pi_\K) f^{(k)}$, so 
$\phispbar(\pi_\K)f^{(k)}$ takes value in $$\m^{k+\varepsilon+1}\mathcal{D}_2^{(k)}(R^{(k+\varepsilon+1)})/J_{\Phi_2}\mathcal{D}_2^{(k)}(R^{(k+\varepsilon+1)})$$ so $f^{(k)}$ takes value in $\m^{k+1}\mathcal{D}_2^{(k)}(R^{(k+\varepsilon+1)})/J_{\Phi_2}\mathcal{D}_2^{(k)}(R^{(k+\varepsilon+1)})$ and so the map
$$f^{(k)}: \jphi{k+1} $$
is also trivial which means that $f$ can be lifted to $L^{(k+1)}$, which is a contradiction.
\end{proof}
Now we finish the proof of theorem 2:
\begin{proof}
$\beta$ be as before, $f = \pi_\K^{m}f_0$, $f_0$ an $O_\K$-module generator of $L(A_1,A_2)$. Suppose that $\K/\F$ is unramified, $f = \pi_\K^m f_0$ where $f_0$ is an $O_\K$-module generator of $\text{Hom}_{O_\K}(A_1,A_2)$, then \[\langle f_0,f_0 \rangle O_\K = \beta O_\K  =   
      \mathfrak{p}_\F O_\K
\]
Now by proposition 2, we have $f_0 \in L^{(\varepsilon)}$ but not in $L^{(\varepsilon+1)}$, so that $f \in L^{((m+1)\varepsilon)}$ but not in $L^{((m+1)\varepsilon + 1)}$.
Now $$\alpha = \langle f,f \rangle = \pi_\K^{2m} \langle f_0,f_0\rangle \implies \alpha O_\K = \pi_\K^{2m} \langle f_0,f_0\rangle O_\K = 
 \pi_\K^{2m}  
      \mathfrak{p}_\F O_\K
$$ $\implies k = (m+1)\varepsilon = \frac{1}{2}\text{ord}_{O_{\tilde{\K}}}(\alpha \mathfrak{p}_\F).$\\
Now If $\K/\F$ is ramified, then by proposition 2 we have that $f_0$ is in $L^{(\varepsilon\text{\normalfont ord}_{O_\K} \mathfrak{D}_{\K/\F})}$ but not in $L^{(\varepsilon\text{\normalfont ord}_{O_K} \mathfrak{D}_{\K/\F} + 1)}$, so that $f$ is in $L^{(\varepsilon(\text{\normalfont ord}_{O_\K} \mathfrak{D}_{\K/\F} + m))}$ and cannot be lifted more,
Here we have two cases:\\
$\bullet$ $p$ is odd: In this case, $\K/\F$ is tamely ramified and so $\text{\normalfont ord}_{O_\K} \mathfrak{D}_{\K/\F}$ is 1 and we get that the above implies  $$\alpha O_\K = \pi_\K^{2m} \langle f_0,f_0 \rangle O_\K = 
 \pi_\K^{2m}  
       O_\K
$$
 $\implies k = (m+1)\varepsilon = \frac{1}{2}\text{ord}_{O_{\tilde{\K}}}(\alpha \mathfrak{p}_\F).
$ \\
$\bullet$ $p = 2$, this case never happens by the ramification condition!
\end{proof}

\subsection{$p$-divisible groups with similar CM-types}
Here we have the same notation as before except that $\K$ is just quadratic \'etale extension over $\F$.

Now for another case of having two triples $(A_1,\kappa_1,\lambda_1),(A_2,\kappa_2,\lambda_2)$ $p$-divisible groups/$\fpbar$ with action of $O_\K$ and polarization, but now with the same CM-type $\Phi$. Let $\art$ be the category of local Artinian $\tilde{\W}$-algebras with residue field $\fpbar$.

\begin{proposition}
\label{prop2.8}
$R$ be an object of $\art$, $A_{i}^{\prime}$ be the unique deformation of $A_i$ to $R$, then
$$\text{Hom}_{O_\K}(A_1^{\prime}, A_2^{\prime}) \rightarrow \text{Hom}_{O_\K}(A_1,A_2)$$ is a bijection.
\end{proposition}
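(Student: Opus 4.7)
The plan is to apply Grothendieck-Messing deformation theory, exploiting the hypothesis that $A_1$ and $A_2$ share the same CM-type $\Phi$: this forces the Hodge filtrations on their Dieudonn\'e crystals to coincide as the \emph{same} ideal, so filtration-compatibility becomes automatic for any $O_\K$-linear map.

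First I would reduce to small extensions. Filter $R$ as an iterated tower of surjections $R = R_n \twoheadrightarrow R_{n-1} \twoheadrightarrow \cdots \twoheadrightarrow R_0 = \fpbar$ with each kernel a square-zero ideal, which therefore carries the trivial PD-structure. By induction on length, it suffices to prove bijectivity for each step $R_{j+1} \twoheadrightarrow R_j$. In particular, no auxiliary PD-thickening lemma nor ramification hypothesis on $\tilde{\K}/\qp$ is needed here, unlike in Theorem \ref{Thm2.4}.

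Next I would invoke the Grothendieck-Messing equivalence. Write $A_i^{(j)}$ for the unique deformation of $A_i$ to $R_j$ and $\mathcal{D}_i$ for the Grothendieck-Messing crystal of $A_i$. The equivalence identifies $\Hom_{O_\K}(A_1^{(j+1)}, A_2^{(j+1)})$ with the subset of $f \in \Hom_{O_\K}(A_1^{(j)}, A_2^{(j)})$ whose induced crystal map $\mathcal{D}_1(R_{j+1}) \to \mathcal{D}_2(R_{j+1})$ carries the Hodge filtration of $A_1^{(j+1)}$ into that of $A_2^{(j+1)}$. By Theorem 2.1.3 of \cite{Howard2012}, these Hodge filtrations are both equal to $J_\Phi \mathcal{D}_i(R_{j+1})$, using the \emph{same} ideal $J_\Phi$ since $\Phi_1 = \Phi_2 = \Phi$. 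Because the induced crystal map is $O_\K \otimes_{\mathbb{Z}_p} \tilde{\W}$-linear and $J_\Phi$ is an ideal of $O_\K \otimes_{\mathbb{Z}_p} \tilde{\W}$, the filtration condition is automatic, so every $f$ lifts; this gives surjectivity. Injectivity is part of the same equivalence (two lifts of $f$ induce the same crystal map, hence coincide by full faithfulness).

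Unlike the case of distinct CM-types handled in the preceding subsection, there is essentially no obstacle: the coincidence $\Phi_1 = \Phi_2$ collapses the filtration obstruction entirely, and no computation of orders in $S$ or lengths of local rings is needed. The only care required is in the initial reduction to small extensions and in quoting Theorem 2.1.3 of \cite{Howard2012} correctly to pin down the Hodge filtration of the unique CM-lift.
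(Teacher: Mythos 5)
Your proof is correct and follows essentially the same route as the paper: reduce to square-zero (hence trivially PD) extensions, identify both Hodge filtrations with $J_\Phi$ times the crystal via Theorem 2.1.3 of \cite{Howard2012}, observe that $O_\K\otimes_{\mathbb{Z}_p}\tilde{\W}$-linearity of the crystal map makes the filtration condition automatic since $J_\Phi$ is an ideal, and conclude by induction on the length of $R$. The paper's proof is the same argument, phrased as the containment $J_{\Phi}(O_\K\otimes\tilde{\W})\subseteq J_{\Phi}$.
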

\begin{proof}
Let $f: S \rightarrow R$ be a surjection in $\art$ such that $(\text{Ker}f)^2 = 0$, so that we can define a trivial divided power structure on it, consequently if $f \in \text{Hom}_{O_\K}(A_1,A_2)$, let $\mathcal{D}_1, \mathcal{D}_2$ be Grothendieck-Messing crystals of $A_1,A_2$. As before let $J_{\Phi}$ be the kernel of the $\tilde{\W}$-algebra  map $O_\K \otimes_{\mathbb{Z}_p} \tilde{\W} \rightarrow \prod_{\phi \in \Phi} \cp(\phi)$ (sending $x \otimes 1$ to $(\phi(x))_{\phi \in \Phi}$), now we have 
$$J_{\Phi}(O_\K \otimes \tilde{\W}) \subseteq J_{\Phi}$$
(they are equal!), so that on
$$f: \mathcal{D}_1(S) \rightarrow \mathcal{D}_2(S)$$ 
Hodge filtrations map to each other ($J_{\Phi}$'s are Hodge filtrations by the Theorem 2.1.3 of \cite{Howard2012}), so by Grothendieck-Messing theory we can extend $f$ (uniquely) to a map $A_1^{S} \rightarrow A_2^{S}$ ($A_i^{S}$ means the deformation of $A_i$ to $S$), so we can do this inductively (induction on the length of $R$) to get the result.
\end{proof}

\section{Global computation}

In this section, we will define the special divisors on $\cm_{\Phi_1} \times \cm_{\Phi_2}$, compute the Arakelov degree of them with putting together a computation of the number of stacky points of the special divisors over residue fields and the lengths of local rings appearing in the Arakelov degree.

Let's fix some notation first. Let $F$ be a totally real extension of $\mathbb{Q}$ of degree $n$, $K$ be a CM-field having $F$ as its maximal totally real field, $O_K$, $O_F$ be the ring of integers of $K$, $F$ respectively, $\Phi_1, \Phi_2$ be a pair of nearby $p$-adic CM-types of $K$ (i.e. their intersection has $n-1$ elements and $\phi^{sp} \in \Phi_1$, $\overline{\phi^{sp}} \in \Phi_2$) and $\infty^{sp} := \phi^{sp}|_F = \overline{\phi^{sp}}|_F$. $\tilde{K}$ be the normal closure of $K/\mathbb{Q}$ (so it can be regarded as a subfield of $\mathbb{C}$), $\tilde{O}$ be the ring of integers of $\tilde{K}$. $\chi_{K/F}: \mathbb{A}_{F}^{\times}  \rightarrow \lbrace \pm 1 \rbrace$ be the quadratic character of the quadratic extension $K/F$. Let $\p$ be a prime of $\tilde{O}$ and $\p_K$ the prime in $O_K$ below $\p$ by the embedding $\phi^{sp}: K \hookrightarrow \tilde{K}$ (the use of $\phi^{sp}$ or $\overline{\phi^{sp}}$ does not matter in our case because we only care about nonsplit primes of $K/F$), $\tilde{k}_\p$
the residue field of $\tilde{K}$ at $\p$ and fix an algebraic closure $\overline{\tilde{k}}_\p$. $\tilde{\mathbb{W}}_\p$ be the ring of integers of the completion of the maximal unramified extension of $\tilde{K}_\p$.

Now we define the algebraic stack of CM abelian varieties and CM-type $\Phi$:
$\cm_{\Phi}$ is the algebraic stack over $\tilde{O}$ such that for an $\tilde{O}$-scheme $S$, $\cm_{\Phi}(S)$ is the groupoid of all triples $(A,\kappa,\lambda)$ with \\
$\bullet$ $A \rightarrow S$ is an abelian scheme of relative dimension $n$.\\
$\bullet$ $\kappa: O_K \rightarrow \text{End}A$ satisfies the $\Phi$-determinant condition.\\
$\bullet$ $\lambda: A \rightarrow A^{\vee}$ is an $O_K$-linear principal polarization.

$O_K$-linear polarization means that for all $x \in O_K$, we have $\lambda \circ x = \bar{x} \circ \lambda$.
Now for $(A_1,A_2) \in (\cm_{\Phi_1} \times_{\tilde{O}} \cm_{\Phi_2})(S)$, let $L(A_1,A_2) := \Hom_{O_K}(A_1,A_2)$ with the $O_K$-valued Hermitian form defined by 
$$f,g \in L(A_1,A_2); \;\;\; \langle f,g \rangle := \lambda_{A_1}^{-1} \circ g^{\vee} \circ \lambda_{A_2} \circ f \in \text{End}_{O_K}A_1 = O_K$$
By proposition 3.1.2 of \cite{Howard2012}, for $i= 1, 2$, every $(A,\kappa,\lambda) \in \cm_{\Phi}(\ktildebarp)$ has a unique deformation to $\wtildep$, we call this deformation the canonical lift of $(A,\kappa,\lambda)$. Assume the ramification conditions below:

1) $K/F$ is not unramified at all finite primes.

2) For every rational prime $l \leq \frac{[\tilde{K}:\mathbb{Q}]}{[K:\mathbb{Q}]}+1$, ramification index is less than $l$

\begin{proposition}
If $K/F$ is not unramified at all finite primes, then for any CM-type $\Phi$ of $K$, $\cm_{\Phi}(\mathbb{C})$ is nonempty.
\end{proposition}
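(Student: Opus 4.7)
The plan is to construct an element of $\cm_\Phi(\co)$ explicitly as a complex torus. I take $A := \co^\Phi / \Phi(\mathfrak{a})$ for a fractional $O_K$-ideal $\mathfrak{a}$ embedded via $a \mapsto (\phi(a))_{\phi \in \Phi}$, equipped with its natural $O_K$-action; the $\Phi$-determinant condition is then automatic, and $A$ is an abelian variety over $\co$. A principal $O_K$-linear polarization on $A$ corresponds bijectively to an element $\xi \in K^\times$ with $\bar\xi = -\xi$, with $\mathrm{Im}(\phi(\xi)) > 0$ for every $\phi \in \Phi$, and with $\xi\,\mathfrak{a}\bar{\mathfrak{a}} = \mathfrak{D}_{K/\mathbb{Q}}^{-1}$, where $\mathfrak{D}_{K/\mathbb{Q}}$ is the absolute different. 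The problem thus reduces to producing a compatible pair $(\mathfrak{a},\xi)$.

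The sign conditions on $\xi$ are straightforward: fix any $\eta \in K^\times$ with $\bar\eta = -\eta$ (which exists as $K/F$ is imaginary quadratic), so that every totally imaginary element is of the form $f\eta$ for some $f \in F^\times$; weak approximation at the real places of $F$ then lets us choose the signs of $f$ to ensure $\mathrm{Im}(\phi(f\eta)) > 0$ on all of $\Phi$. The harder issue is the ideal equality. Using $\mathfrak{a}\bar{\mathfrak{a}} = N_{K/F}(\mathfrak{a})\,O_K$ and the factorization $\mathfrak{D}_{K/\mathbb{Q}} = \mathfrak{D}_{K/F} \cdot \mathfrak{D}_{F/\mathbb{Q}}\,O_K$, the ideal condition splits into two pieces: (a) the $\sigma$-invariant $O_K$-ideal $(\xi)\,\mathfrak{D}_{K/F}$ should be extended from an $O_F$-ideal $\mathfrak{c}$, and (b) $\mathfrak{c}$ should satisfy $\mathfrak{c}\,\mathfrak{D}_{F/\mathbb{Q}}\,N_{K/F}(\mathfrak{a}) = O_F$.

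Condition (a) holds automatically: it is only nontrivial at the ramified primes $\mathfrak{P}$ of $K/F$, and for any totally imaginary $\xi$ a direct local computation (using that $K_\mathfrak{P}/F_\mathfrak{p}$ is monogenic, so the different has a generator of the form $2\eta_1 - t$ with $\eta_1$ a monogenic generator) shows $v_\mathfrak{P}(\xi) \equiv v_\mathfrak{P}(\mathfrak{D}_{K/F}) \pmod 2$; hence $v_\mathfrak{P}((\xi)\,\mathfrak{D}_{K/F})$ is always even, as required. For condition (b), one must realize $\mathfrak{c}^{-1}\mathfrak{D}_{F/\mathbb{Q}}^{-1}$ as the norm of a fractional $O_K$-ideal, and the image of $N_{K/F}$ on fractional ideals consists exactly of those ideals whose valuation at every inert prime is even. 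Replacing $\xi$ by $f\xi$ for $f \in F^\times_{\gg 0}$ modifies $\mathfrak{c}$ by the principal ideal $(f)$, so that parities at any preassigned finite set of inert primes can be altered.

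The main obstacle, and the one place where the hypothesis of the proposition is consumed, is the global patching at this last step. The obstruction to solvability lies in the cokernel of $N_{K/F}$ on ideal class groups, which by class field theory injects into $\mathrm{Gal}(K/F) = \{\pm 1\}$ via the quadratic character $\chi_{K/F}$. When $K/F$ has no finite ramified prime, $\chi_{K/F}$ is unramified at all finite primes and the resulting obstruction is a genuine $\mathbb{Z}/2$-parity that cannot in general be killed. Under the hypothesis that some finite prime $\mathfrak{q}$ of $F$ is ramified in $K$, however, $\chi_{K/F,\mathfrak{q}}$ is ramified, giving the extra freedom to modify $\xi$ by a unit $u \in O_{F,\mathfrak{q}}^\times$ with $\chi_{K/F,\mathfrak{q}}(u) = -1$; Hilbert reciprocity then forces the obstruction class to vanish and a suitable $f$ to exist. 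With such $\xi$ in hand, a fractional ideal $\mathfrak{a}$ satisfying $N_{K/F}(\mathfrak{a}) = \mathfrak{c}^{-1}\mathfrak{D}_{F/\mathbb{Q}}^{-1}$ can be chosen, yielding the desired triple $(A,\kappa,\lambda) \in \cm_\Phi(\co)$.
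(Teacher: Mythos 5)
Your proposal is correct and follows essentially the same route as the paper: both realize the abelian variety as $\co^\Phi/\Phi(\mathfrak{a})$ with Riemann form $\mathrm{tr}_{K/\mathbb{Q}}(\xi x\bar y)$, reduce to solving $\xi\,\mathfrak{a}\bar{\mathfrak{a}}=\mathfrak{D}^{-1}$ after fixing the signs of $\xi$ by weak approximation, and kill the obstruction via the class-field-theoretic fact that the norm map onto the narrow class group of $F$ is surjective precisely because $K/F$ ramifies at a finite prime. Your parity check at the ramified primes (condition (a)) is a detail the paper leaves implicit, but it is the same argument.
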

\begin{proof}
Consider $\mathbb{C}^n$ constructed from the $\phi \in \Phi$. Let $\zeta \in K^{\times}$ satisfy $\bar{\zeta} = -\zeta$, also (using weak approximation theorem) assume that $\phi(\zeta)i > 0$ for all $\phi \in \Phi$. Now $\lambda(x,y) = \text{tr}_{K/\mathbb{Q}}(\zeta x \bar{y})$ is an $\mathbb{R}$-symplectic form on $K_{\mathbb{R}} \cong \mathbb{C}^n$ and $\lambda(ix,x)$ is positive definite. By class field theory and the ramification condition in the hypothesis, the norm map from ideal class group of $K$ to narrow ideal class group of $F$ is surjective, so there is a fractional $O_K$-ideal $I$ and $u \in F^{\gg 0}$ and that $uI\bar{I} = \zeta^{-1} \mathfrak{D}^{-1}$ where $\mathfrak{D}$ is the different of $K/\mathbb{Q}$. So by replacing $\zeta$ with $\zeta u^{-1}$ we have $\zeta I \bar{I} = \mathfrak{D}^{-1}$, so that 
$$I=\lbrace x \in K_{\mathbb{R}} | \lambda(x,I) \subseteq \mathbb{Z} \rbrace $$
Now $\lambda$ is a Riemann form on $K_{\mathbb{R}}/I$, so it is an abelian variety with the given CM-type.
\end{proof}

\begin{proposition}
Suppose that $k$ is an algebraically closed field and $(A_1,A_2) \in (\cm_{\Phi_1} \times_{\tilde{O}} \cm_{\Phi_2})(k)$, if $f \in V(A_1,A_2) := L(A_1,A_2) \otimes \mathbb{Q}$ is such that $\langle f,f \rangle \neq 0$, then $k$ has nonzero characteristic and $A_1, A_2$ are $O_K$-isogenous.
\end{proposition}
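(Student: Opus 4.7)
The plan is to first show that $\langle f,f\rangle\neq 0$ forces $f$ to be an $O_K$-linear isogeny, and then rule out characteristic $0$ by comparing the Lie algebra decompositions of $A_1$ and $A_2$ dictated by the $\Phi_i$-determinant conditions.

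First I would reduce to the case where $f$ is an honest homomorphism: scaling by a positive integer preserves $O_K$-linearity and keeps $\langle f,f\rangle$ nonzero, so I may assume $f\in L(A_1,A_2)$ and $\alpha:=\langle f,f\rangle\in O_K\setminus\{0\}$. The key observation is that $\kappa_1(\alpha)$ is an isogeny of $A_1$: indeed, $\alpha$ divides $N_{K/\mathbb{Q}}(\alpha)\in\mathbb{Z}\setminus\{0\}$ inside $O_K$, and multiplication by a nonzero integer is an isogeny, so $\kappa_1(\alpha)$ has finite kernel. On the other hand $\kappa_1(\alpha)=\lambda_{A_1}^{-1}\circ f^{\vee}\circ\lambda_{A_2}\circ f$. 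If $f$ were not surjective, its image would be a proper abelian subvariety of $A_2$, so the whole composition would factor through an abelian variety of dimension strictly less than $n=\dim A_1$, forcing a positive-dimensional kernel in $\kappa_1(\alpha)$ — contradicting the previous sentence. Thus $f$ is surjective, and since $\dim A_1=\dim A_2=n$, $f$ is an isogeny. This already gives the second conclusion: $A_1$ and $A_2$ are $O_K$-isogenous.

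To finish, suppose for contradiction $\operatorname{char}k=0$. Since $k$ is algebraically closed of characteristic $0$, I may fix an embedding $\tilde O\subseteq\bar{\mathbb{Q}}\subseteq k$, so that every $\phi\in\Phi_1\cup\Phi_2$ gives a ring map $O_K\to k$. The $\Phi_i$-determinant condition, together with the fact that $O_K\otimes_{\mathbb{Z}}k\cong\prod_{\phi\colon K\hookrightarrow k}k$ is \'etale semisimple, yields an $O_K\otimes_{\mathbb{Z}}k$-module decomposition
\[
\operatorname{Lie}(A_i)\;\cong\;\bigoplus_{\phi\in\Phi_i}k_{\phi},
\]
where each summand is a $1$-dimensional $k$-vector space on which $O_K$ acts through $\phi$. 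The isogeny $f$ would then induce an $O_K$-linear isomorphism $\operatorname{Lie}(A_1)\xrightarrow{\sim}\operatorname{Lie}(A_2)$; but the $\phi^{sp}$-isotypic component is one-dimensional on the left and zero on the right, a contradiction. Hence $\operatorname{char}k>0$.

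The main obstacle is the isogeny step: carefully justifying that a nonzero element of $O_K$ always acts as an isogeny on $A_1$, and that the factorization of $\langle f,f\rangle$ through $\operatorname{image}(f)$ forces $f$ itself to be surjective. Once this is established, the $\Phi_i$-eigenspace comparison ruling out characteristic $0$ is essentially automatic.
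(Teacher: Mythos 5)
Your argument is correct, and it differs from the paper's in execution even though both hinge on the same identity $\langle f,f\rangle=\lambda_{A_1}^{-1}\circ f^{\vee}\circ\lambda_{A_2}\circ f$. The paper passes to the rational $\ell$-adic Tate modules ($\ell\neq\operatorname{char}k$) and uses the adjoint relation $\Lambda_1(x,f_l^{\dagger}y)=\Lambda_2(f_lx,y)$ to conclude $f_l^{\dagger}\circ f_l=\langle f,f\rangle\neq 0$, hence $f_l$ is injective and $f$ is an isogeny; the characteristic-zero exclusion is then just asserted from the difference of CM-types. You instead stay on the abelian varieties themselves: you note that $\kappa_1(\alpha)$ is an isogeny because $\alpha$ divides its nonzero norm, and that a non-surjective $f$ would force the composition to factor through a lower-dimensional abelian subvariety, which is a clean dimension count avoiding Tate modules altogether (and works uniformly in all characteristics, whereas the Tate-module route quietly fixes one auxiliary $\ell$). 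You also spell out the Lie-algebra eigenspace comparison that the paper leaves implicit: in characteristic zero the isogeny induces an $O_K\otimes k$-linear isomorphism of Lie algebras, and the $\phi^{sp}$-isotypic components have different dimensions. Both proofs are sound; yours is more self-contained and makes the characteristic-zero step rigorous, while the paper's Tate-module formulation sets up machinery it reuses later (e.g.\ in Proposition 3.4).
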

\label{prop3.2}
\begin{proof}
Suppose that $f \in L(A_1,A_2)$ and let $T_l(A_i)$ be the Tate module at a prime $l \neq \text{char} k$ and $T_l^{\circ}(A_i) := T_l(A_i) \otimes \mathbb{Q}$, consider the Weil pairing induced by polarizations:
$$i=1, 2; \;\;\;\Lambda_i: T_{l}^{\circ}(A_i) \times T_{l}^{\circ}(A_i) \rightarrow \mathbb{Q}_l(1)$$
By existence of $f$, we get a map $$f_l: T_{l}^{\circ}(A_1) \rightarrow T_{l}^{\circ}(A_2)$$
Consider the adjoint of $f_l$:
$$\Lambda_1(x,f_l^{\dag}y) = \Lambda_2(f_lx,y)$$
and we get that $f_l^{\dag} \circ f_l = \langle f,f \rangle$ as elements in $F_l \subseteq \text{End}(T_l^{\circ} A_1) \otimes \mathbb{Q}$, so that $f_l$
 is injective and an isogeny, so we have $O_K$-linear isogeny $A_1 \sim A_2$. Now, because CM-types of $A_1, A_2$ are different, $\text{char}\; k > 0$.
 \end{proof}
 
 From now on, we fix some $(A_1,A_2) \in (\cm_{\Phi_1} \times_{\tilde{O}} \cm_{\Phi_2})(\ktildebarp)$.
\begin{remark}
If $(\cm_{\Phi_1} \times_{\tilde{O}} \cm_{\Phi_2})(\ktildebarp)$ is nonempty with an $f$ as above, then $\mathfrak{p}_F$ is nonsplit in $K$. Indeed, On the level of Lie algebras, using the isogeny $f$, we must have $\phi^{sp} = \overline{\phi^{sp}}$, so that $x = \bar{x} \mod \mathfrak{p}$ for all $x \in K$, so $\mathfrak{p} = \bar{\mathfrak{p}}$, so that $\mathfrak{p}_K = \overline{\mathfrak{p}_K} $ and $\mathfrak{p}_F$ is nonsplit in $K$.
\end{remark}
\begin{proposition}
Let $k$ be an algebraically closed field and $\text{char } k > 0$ with $f \in L(A_1,A_2)$ such that $\langle f,f\rangle \neq 0$, then $L(A_1,A_2)$ is a projective $O_K$-module of rank 1. Let $q$ be a rational prime and $\mathfrak{q}$ above it in $F$, then the map
 $$L(A_1,A_2) \otimes_{O_{F}} O_{F,\mathfrak{q}} \rightarrow \text{Hom}_{O_{K}}(A_1[q^{\infty}],A_2[\mathfrak{q}^{\infty}])$$
is an isomorphism.
\end{proposition}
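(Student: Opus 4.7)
The plan is to establish the two assertions in turn. For the first, I would combine the preceding proposition, which provides an $O_K$-linear isogeny $A_1\sim A_2$, with the fact that $K$ is its own centralizer in the endomorphism algebra. The isogeny yields $L(A_1,A_2)\otimes_{\mathbb{Z}}\mathbb{Q} \cong \text{End}_{O_K}(A_1)\otimes_{\mathbb{Z}}\mathbb{Q}$ as $K$-vector spaces; since $K$ has $\mathbb{Q}$-degree $2\dim A_1$ and embeds in the semisimple algebra $\text{End}^0(A_1)$, it is a maximal commutative subalgebra and hence equals its own centralizer, so $\text{End}_{O_K}^0(A_1)=K$ and $L(A_1,A_2)\otimes\mathbb{Q}$ is one-dimensional over $K$. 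Because $L(A_1,A_2)$ sits inside the finitely generated abelian group $\Hom(A_1,A_2)$, it is a finitely generated torsion-free module over the Dedekind domain $O_K$, hence projective, of rank one by the generic computation.

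For the second assertion, I first note that $O_K$-linearity forces any map $A_1[q^{\infty}]\to A_2[\mathfrak{q}^{\infty}]$ to factor through the $\mathfrak{q}$-summand of $A_1[q^{\infty}]$, since $O_K$ acts on $A_2[\mathfrak{q}^{\infty}]$ through the quotient $O_K\otimes_{O_F}O_{F,\mathfrak{q}}=\prod_{\mathfrak{q}_K\mid\mathfrak{q}}O_{K,\mathfrak{q}_K}$. Both source and target then split as products indexed by primes $\mathfrak{q}_K$ of $K$ above $\mathfrak{q}$, and it suffices to identify corresponding factors, each of which is a module over the DVR $O_{K,\mathfrak{q}_K}$. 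Injectivity is immediate by embedding both sides into the one-dimensional $K\otimes_F F_{\mathfrak{q}}$-vector space $V(A_1,A_2)\otimes_F F_{\mathfrak{q}}$.

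The heart of the argument is surjectivity. Since CM abelian varieties in positive characteristic are defined over a finite field, $A_1$ and $A_2$ descend to some $\mathbb{F}_{p^r}\subseteq k$, and Tate's theorem---the $\ell$-adic version for $q\neq p$, and the full-faithfulness theorem for $p$-divisible groups over finite fields when $q=p$---gives $\Hom_{\mathbb{F}_{p^r}}(A_1,A_2)\otimes\mathbb{Z}_q \cong \Hom_{G_{\mathbb{F}_{p^r}}}(A_1[q^{\infty}],A_2[q^{\infty}])$. Passing to the colimit over finite extensions of $\mathbb{F}_{p^r}$ removes the Galois equivariance condition, yielding $\Hom(A_1,A_2)\otimes\mathbb{Z}_q\cong\Hom(A_1[q^{\infty}],A_2[q^{\infty}])$ over $k$. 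Restricting to $O_K$-linear maps and projecting to the $\mathfrak{q}$-component via the decomposition above produces the claimed isomorphism. The main obstacle is the $q=p$ case, which requires Tate's theorem for $p$-divisible groups and careful tracking of how the decomposition $A_i[p^{\infty}]=\prod_{\mathfrak{q}_K\mid p}A_i[\mathfrak{q}_K^{\infty}]$ matches the $O_F$-adic decomposition of $L(A_1,A_2)$; once those are aligned, comparison of ranks over each $O_{K,\mathfrak{q}_K}$ forces the injective natural map to be an isomorphism.
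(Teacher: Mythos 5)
The first assertion and the injectivity half of the second are fine and essentially agree with the paper. The genuine gap is in your surjectivity argument: the claim that passing to the colimit over finite extensions of $\mathbb{F}_{p^r}$ ``removes the Galois equivariance condition'' and yields $\Hom(A_1,A_2)\otimes\mathbb{Z}_q\cong\Hom(A_1[q^{\infty}],A_2[q^{\infty}])$ over $k$ is false. That colimit computes the union $\bigcup_n\Hom_{G_{\mathbb{F}_{p^{rn}}}}(A_1[q^\infty],A_2[q^\infty])$, i.e.\ the homomorphisms fixed by \emph{some} power of Frobenius, which by Tate's theorem is $\Hom_{\overline{\mathbb{F}}_p}(A_1,A_2)\otimes\mathbb{Z}_q$; for $q\neq p$ this has $\mathbb{Z}_q$-rank $\dim_{\mathbb{Q}}\Hom(A_1,A_2)\otimes\mathbb{Q}\leq 4n^2$, with equality only in the supersingular case, whereas the target $\Hom(T_qA_1,T_qA_2)$ always has rank $4n^2$. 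In the setting of this proposition only the $\mathfrak{p}_F$-divisible groups are forced to be supersingular; away from $\mathfrak{p}_F$ the abelian varieties can be ordinary (e.g.\ when another prime of $F$ over $p$ splits in $K$), and then your intermediate isomorphism fails. The conclusion survives only because you restrict to $O_K$-linear maps, but you must do so \emph{before} taking invariants: $\Hom_{O_K}(T_qA_1,T_qA_2)$ is free of rank one over $O_K\otimes\mathbb{Z}_q$, Frobenius acts on it by $f\mapsto\pi_2 f\pi_1^{-1}$ with $\pi_i\in O_K$ the Frobenius endomorphisms (they lie in $O_K$ since $\text{End}_{O_K}(A_i)=O_K$), and once the $O_K$-isogeny $A_1\sim A_2$ is defined over the finite field one gets $\pi_1=\pi_2$, so Frobenius acts trivially and the colimit of the $O_K$-linear invariants is everything. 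Your closing appeal to ``comparison of ranks'' also cannot substitute for this: an injection of free rank-one modules over a DVR need not be surjective.

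For comparison, the paper's proof is more elementary and uniform in $q$: the $O_K$-isogeny $A_2\to A_1$ embeds $\Hom_{O_K}(A_1[q^\infty],A_2[q^\infty])$ into $\text{End}_{O_K}(A_1[q^\infty])=O_K\otimes\mathbb{Z}_q$ with finite cokernel, so the target is projective of rank one over $O_K\otimes\mathbb{Z}_q$, the same rank as the source; the natural map is injective with $\mathbb{Z}_q$-torsion-free cokernel (if $qf$ extends to a homomorphism $g$ of abelian varieties, then $g$ kills $A_1[q]$, hence $g=qh$ with $h$ lifting $f$), and an injection with torsion-free cokernel between modules of equal finite rank is an isomorphism. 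If you prefer the Tate-theorem route, the repair indicated above works, but the rank-counting argument avoids Tate's theorem entirely.
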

\begin{proof}
By the last proposition, we get an $O_K$-linear isogeny $A_2 \rightarrow A_1$, so we get
 $$\text{Hom}_{O_K}(A_1,A_2) \rightarrow \text{Hom}_{O_K}(A_1,A_1) \cong O_K$$
 which is injective and has finite cokernel,
 so $\text{Hom}_{O_K}(A_1,A_2)$ is a (nonzero) fractional ideal of $O_K$ (as $0 \neq f \in \Hom_{O_K}(A_1,A_2)$). Also, the map $A_2[q^{\infty}] \rightarrow A_1[q^{\infty}]$ will give us the injective map
 $$\text{Hom}_{O_K}(A_1[q^{\infty}],A_2[q^{\infty}]) \rightarrow \text{Hom}_{O_K}(A_1[q^\infty],A_1[q^\infty]) = O_K \otimes \mathbb{Z}_q$$
 with finite cokernel, so that $\text{Hom}_{O_K}(A_1[q^{\infty}],A_2[q^{\infty}]) \subseteq O_K \otimes \mathbb{Z}_q$ is a projective $O_K \otimes \mathbb{Z}_q$-module of rank 1.
 
 The map
 $$\text{Hom}_{O_K}(A_1,A_2) \otimes_{\mathbb{Z}} \mathbb{Z}_{q} \rightarrow \text{Hom}_{O_K}(A_1[q^{\infty}],A_2[q^{\infty}])$$
 is injective with $\mathbb{Z}_q$-torsion free cokernel and as the $\mathbb{Z}_q$-ranks of domain and codomain are equal (by the fact just mentioned), this is an isomorphism.
 \\
 So after taking $\mathfrak{q}$-parts, $L(A_1,A_2) \otimes_{O_F} O_{F,\mathfrak{q}} \rightarrow \text{Hom}_{O_K}(A_1[q^{\infty}],A_2[\mathfrak{q}^{\infty}])$
 is an isomorphism.
 \end{proof}

 Now consider the group $C_K = \mathcal{I}_K/\mathcal{P}_K$ where $\mathcal{I}_K$ consists of pairs  $(I,\zeta)$ such that $I$ is a fractional $O_K$-ideal and $\zeta \in F^{\gg 0}$ that $\zeta I \bar{I} = O_K$, multiplication is given by $(I_1,\zeta_1)(I_2,\zeta_2) = (I_1I_2, \zeta_1\zeta_2)$ and $\mathcal{P}_K = \lbrace (z^{-1}O_K, z\bar{z})| z \in K^{\times} \rbrace$ is a subgroup of it. Now $C_K$ acts on the set of all $(L,H)$ (where $L$ is an $O_K$-fractional ideal and $H$ is an $O_K$-valued $O_K$-Hermitian form on $L$) as follows: If $(I,\zeta) \in \mathcal{I}_K$, change this element by an element of $\mathcal{P}_K$ so that $\zeta \in O_K$ , and define the action by 
 $$(I,\zeta).(L,H) = (IL, \zeta H)$$
 Now for an abelian scheme $A \in \cm_{\Phi_i}(S)$ and for $I$ a fractional ideal of $O_K$, write $A^I$ for the Serre construction $I \otimes_{O_K} A \in \cm_{\Phi_i}(S)$.\\
 For a ring $R$, denote $R \otimes \hat{\mathbb{Z}}$ by $\hat{R}$ and for an $O_K$-module $M$, we denote $M \otimes_{O_K} \widehat{O}_K$ by $\hat{M}$ (so that $\widehat{L}(A_1,A_2) = L(A_1,A_2) \otimes_{O_K} \widehat{O}_K$
 Now we have
 \begin{proposition}
 Let $S$ be a connected $\tilde{O}$-scheme and $(A_1,A_2) \in (\cm_{\Phi_1} \times_{\tilde{O}} \cm_{\Phi_2})(\mathbb{C})$, then for any $(I,\zeta) \in \mathcal{I}_K$, we have an isomorphism of $\widehat{O}_K$-modules $\widehat{L}(A_1,A_2) \cong \widehat{L}(A_1,A_2^I)$ such that if $\langle\; ,\; \rangle^I$ is the Hermitian form of $\widehat{L}(A_1,A_2^{I})$, we have $$ \langle\; ,\; \rangle^{I} = \zeta z \bar{z}\langle\; ,\; \rangle$$ where $z$ is a finite idele of $K$ such that $z O_K = I$.
 \end{proposition}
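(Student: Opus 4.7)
The plan is to unwind how the Serre construction $A_2^I = I\otimes_{O_K} A_2$ acts on homomorphism modules and on polarizations, and then to transport the Hermitian form through these identifications. Since both sides of the sought isomorphism have been $\widehat{O}_K$-completed, I would argue place by place using the preceding proposition to identify $\widehat{L}(A_1, A_j) \cong \Hom_{\widehat{O}_K}\bigl(\widehat{T}(A_1), \widehat{T}(A_j)\bigr)$ for $A_j \in \{A_2, A_2^I\}$, where $\widehat{T}$ denotes the adelic Tate module.

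First, I would produce the bare $\widehat{O}_K$-module isomorphism. By the defining property of the Serre construction, $\widehat{T}(A_2^I) \cong \widehat{I}\otimes_{\widehat{O}_K}\widehat{T}(A_2)$, and any finite idele $z$ with $z\widehat{O}_K = \widehat{I}$ realizes multiplication by $z$ as an $\widehat{O}_K$-linear isomorphism $\widehat{T}(A_2) \xrightarrow{\sim} \widehat{T}(A_2^I)$. Postcomposition then gives $\widehat{L}(A_1, A_2) \xrightarrow{\sim} \widehat{L}(A_1, A_2^I)$, sending $\tilde f$ to $f := z\tilde f$.

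Next, I would compare polarizations. Cartier duality yields an $O_K$-equivariant isomorphism $(I\otimes_{O_K} A_2)^\vee \cong \bar I^{-1}\otimes_{O_K} A_2^\vee$, the conjugate appearing because the $O_K$-action on the dual goes through $x\mapsto \bar x$. The condition $\zeta I\bar I = O_K$ then realizes multiplication by $\zeta$ as an $O_K$-isomorphism $I \xrightarrow{\sim} \bar I^{-1}$, so that the canonical principal polarization on $A_2^I$ is $\lambda_{A_2^I} = \zeta\otimes\lambda_{A_2}$.

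Finally, I would compute $\langle f, g\rangle^I = \lambda_{A_1}^{-1}\circ g^\vee\circ \lambda_{A_2^I}\circ f$ with $f = z\tilde f$ and $g = z\tilde g$. The $z$-twist built into $f$ combined with the $\zeta$-twist in $\lambda_{A_2^I}$ contributes a factor $\zeta z$, while taking the dual $g^\vee$ and using $\bar I^{-1} = \bar z^{-1}\widehat{O}_K$ to identify $(A_2^I)^\vee$ with $A_2^\vee$ contributes a factor $\bar z$, yielding
\[
\langle f, g\rangle^I = \zeta\, z\bar z\,\langle \tilde f, \tilde g\rangle.
\]
The main obstacle will be tracking the conjugations correctly in $(I\otimes A_2)^\vee \cong \bar I^{-1}\otimes A_2^\vee$, so that $g^\vee$ contributes $\bar z$ (rather than $z$ or $z^{-1}$) and the resulting scalar is precisely $\zeta z\bar z$, consistent with the normalization $\zeta I\bar I = O_K$ and with the well-definedness of the $\mathcal{C}_K$-action on pairs $(L,H)$.
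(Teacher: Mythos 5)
The paper states this proposition without any proof, so there is nothing to compare your argument against line by line; judged on its own, your sketch is the standard argument and is correct in substance. The three ingredients you isolate are exactly the right ones: $\widehat{T}(A_2^I)\cong \widehat{I}\otimes_{\widehat{O}_K}\widehat{T}(A_2)$ with the trivialization given by a generator $z$ of $\widehat{I}$; the duality $(I\otimes_{O_K}A_2)^\vee\cong \bar{I}^{-1}\otimes_{O_K}A_2^\vee$ (the conjugate appearing because the $O_K$-structure on the dual is twisted by $x\mapsto\bar x$ so that polarizations are $O_K$-linear); and the identity $\zeta I\bar I=O_K$, which makes $\zeta\otimes\lambda_{A_2}$ a principal polarization on $A_2^I$. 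Your bookkeeping of the conjugations is right: the dual of multiplication by $z$ acts as $\bar z$ under the sesquilinear Weil pairing, so the scalar is $\zeta z\bar z$, which lies in $\widehat{O}_F^{\times}$ precisely because of the normalization of $\mathcal{I}_K$ --- a useful sanity check you could add.

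One point deserves more care than your appeal to ``the preceding proposition.'' That proposition identifies $L(A_1,A_2)\otimes_{O_F}O_{F,\mathfrak{q}}$ with the Hom of $p$-divisible groups only over an algebraically closed field of \emph{positive} characteristic. Over $\mathbb{C}$ the literal module $L(A_1,A_2)=\Hom_{O_K}(A_1,A_2)$ vanishes, since $\Phi_1\neq\Phi_2$ forces any nonzero $O_K$-linear map to be an isogeny between abelian varieties with different CM-types. So the statement only has content if $\widehat{L}$ is read as $\Hom_{\widehat{O}_K}(\widehat{T}(A_1),\widehat{T}(A_2))$, equivalently as the completion of the Betti module $L_{Betti}$ that the paper introduces immediately afterwards and actually uses when it writes $(I,\zeta).(L_{Betti}(A_1,A_2),\langle\;,\;\rangle)\cong(L_{Betti}(A_1,A_2^I),\langle\;,\;\rangle^I)$. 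Your decision to work directly with adelic Tate modules is therefore not just a convenience but the correct reading of the statement; you should say this explicitly rather than route it through a proposition that does not apply in characteristic zero. With that adjustment the proof is complete.
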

 
 Now for $(A_1,A_2) \in (\cm_{\Phi_1} \times_{\tilde{O}} \cm_{\Phi_2})(\mathbb{C})$, define $L_{Betti}(A_1,A_2) = \Hom_{O_K}(H_1(A_1,\mathbb{C}),H_1(A_2,\mathbb{C}))$.
 
 \begin{proposition}
 Let $(A_1,A_2) \in (\cm_{\Phi_1} \times_{\tilde{O}} \cm_{\Phi_2})(\mathbb{C})$, then there exists $\beta \in \widehat{O}_F^{\times}$ and an isomorphism 
 $$(\widehat{L}_{Betti}(A_1,A_2),\langle\; , \;\rangle) \cong (\widehat{O}_K, \beta x\bar{y})$$
  Also, $\langle\; ,\; \rangle$ is negative definite at $\infty^{sp}$ and positive definite at others.
 \end{proposition}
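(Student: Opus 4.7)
My plan is to split the proposition into three pieces: the integral/adelic identification $\widehat{L}_{Betti}\cong \widehat{O}_K$, the fact that the Hermitian form is $\beta x\bar y$ with $\beta\in\widehat{O}_F^{\times}$, and finally the archimedean signature. For the first piece, I would begin by identifying the integral Betti lattice $H_1(A_i,\mathbb{Z})$ as a rank-one projective $O_K$-module (equivalently, a fractional ideal $\mathfrak{a}_i\subset K$): indeed $A_i$ has CM by $O_K$ and relative dimension $n=[F:\mathbb{Q}]$, so its $\mathbb{Z}$-rank is $2n=[K:\mathbb{Q}]$ and the action makes it $K$-free of rank one. Then $L_{Betti}(A_1,A_2)\cong \Hom_{O_K}(\mathfrak{a}_1,\mathfrak{a}_2)\cong \mathfrak{a}_1^{-1}\mathfrak{a}_2$ is itself rank-one projective over $O_K$. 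Completing, $\widehat{L}_{Betti}(A_1,A_2)$ becomes projective of rank one over the semi-local ring $\widehat{O}_K$, hence free; choosing a generator gives $\widehat{L}_{Betti}(A_1,A_2)\cong\widehat{O}_K$ and expresses the Hermitian form as $\beta x\bar y$ for some $\beta\in\widehat{O}_K$.

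The second piece is then short. The Hermitian symmetry $\overline{\langle f,g\rangle}=\langle g,f\rangle$ forces $\bar\beta=\beta$, so $\beta\in\widehat{O}_F$. The fact that both $\lambda_i$ are principal polarizations means the induced Weil pairings on $H_1(A_i,\widehat{\mathbb{Z}})$ are perfect, so the composition $\langle f,g\rangle=\lambda_1^{-1}\circ g^{\vee}\circ\lambda_2\circ f$ gives a perfect pairing on $\widehat{L}_{Betti}(A_1,A_2)$; the discriminant of this pairing is $\beta$ (up to a unit), which therefore lies in $\widehat{O}_F^{\times}$.

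The main work is the archimedean signature calculation, which I would carry out by mimicking the Dieudonn\'e-level computation of Lemma \ref{lemma2.2} in the complex-analytic setting. Using the uniformization $A_i(\mathbb{C})=\mathbb{C}^{\Phi_i}/\Phi_i(\mathfrak{a}_i)$, each principal polarization $\lambda_i$ arises from a Riemann form $E_i(x,y)=\text{tr}_{K/\mathbb{Q}}(\zeta_i x\bar y)$ with $\zeta_i\in K^{\times}$ satisfying $\bar\zeta_i=-\zeta_i$ and the positivity condition $-i\,\phi(\zeta_i)>0$ for every $\phi\in\Phi_i$. The same formal manipulation as in Lemma \ref{lemma2.2} identifies the archimedean Hermitian form on $L_{Betti}(A_1,A_2)\otimes\mathbb{R}$ as $\xi x\bar y$ with $\xi=\zeta_1^{-1}\zeta_2$, and $\bar\xi=(-\zeta_1)^{-1}(-\zeta_2)=\xi$ shows $\xi\in F^{\times}$. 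For a real place $\sigma$ of $F$, picking any embedding $\phi:K\hookrightarrow\mathbb{C}$ above $\sigma$ gives $\sigma(\xi)=\phi(\zeta_1)^{-1}\phi(\zeta_2)\in\mathbb{R}^{\times}$. At each $\sigma\neq\infty^{sp}$ the unique $\phi\in\Phi_1$ above $\sigma$ coincides with the unique element of $\Phi_2$ above $\sigma$, so $-i\phi(\zeta_1)$ and $-i\phi(\zeta_2)$ are both positive, yielding $\sigma(\xi)>0$. At $\sigma=\infty^{sp}$ one has $\phi^{sp}\in\Phi_1$ but $\overline{\phi^{sp}}\in\Phi_2$, and the identity $\phi^{sp}(\zeta_2)=-\overline{\phi^{sp}}(\zeta_2)$ coming from $\bar\zeta_2=-\zeta_2$ flips the sign, giving $-i\phi^{sp}(\zeta_2)<0$ and hence $\sigma(\xi)<0$.

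The hard part is really just the sign bookkeeping in this last step: keeping track of $\bar\zeta_i=-\zeta_i$, the CM-type positivity conditions, and the distinction between $\phi^{sp}$ and $\overline{\phi^{sp}}$. Once one has carefully matched the analytic Hermitian form to the formula $\xi x\bar y$ (which parallels the $p$-adic calculation that has already been carried out in Lemma \ref{lemma2.2}), the signature reads off directly from the positivity hypotheses on the $\zeta_i$.
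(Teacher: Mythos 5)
Your proposal is correct and follows essentially the same route as the paper's (very terse) sketch: uniformize, write the two principal polarizations as Riemann forms $\mathrm{tr}_{K/\mathbb{Q}}(\zeta_i x\bar y)$, identify the Hermitian form on $L_{Betti}$ as $\zeta_1^{-1}\zeta_2\, x\bar y$ with $\zeta_1^{-1}\zeta_2$ generating the unit ideal (self-duality of the lattices), and read the signature off the CM-type positivity conditions, with the sign flip at $\infty^{sp}$ coming from $\overline{\phi^{sp}}\in\Phi_2$ and $\bar\zeta_2=-\zeta_2$. You supply more detail than the paper does (rank-one projectivity, freeness over the semi-local ring $\widehat{O}_K$, conjugate-symmetry forcing $\beta\in\widehat{O}_F$), and the only point to tighten is that the unit claim is cleanest via the explicit ideal computation $\zeta_iO_K=(\mathfrak{a}_i\bar{\mathfrak{a}}_i\mathfrak{D})^{-1}$ rather than an appeal to "discriminant up to a unit."
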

 \begin{proof} We give a sketch below:
 
 We get the polarizations $\lambda_1(x,y) = \text{tr}_{K/\mathbb{Q}}(\xi x \bar{y})$, $\lambda_2(x,y) = \text{tr}_{K/\mathbb{Q}}(\zeta x \bar{y})$, then $\zeta \xi^{-1} O_K = O_K$. $L_{B}(A_1,A_2)$ has Hermitian form $\zeta \xi^{-1} x\bar{y}$, So that $\phi(\zeta \xi^{-1}) > 0$ for $\phi \neq \phi^{sp}, \phispbar$ and $\phi^{sp}(\zeta \xi^{-1})<0, \overline{\phi^{sp}}(\zeta \xi^{-1})<0$. \end{proof}
 Now consider all pairs $(L,H)$ such that\\
 $\bullet$ $L$ is a fractional ideal of $O_K$.\\
 $\bullet$ $H$ is an $O_K$-valued $O_K$-Hermitian form on $L$.\\
 $\bullet$ $(\widehat{L},H) \cong (O_K,\beta x \bar{y})$ with $\beta \in \widehat{O}_F^{\times}$.\\
 $\bullet$ $(L,H)$ has signature -1 at $\infty^{sp}$ and +1 at other embeddings.\\
 The set of these Hermitian spaces has a $C_K$-action and we have
 $$(I,\zeta).(L_{Betti}(A_1,A_2),\langle\; , \;\rangle) \cong (L_{Betti}(A_1,A_2^I),\langle \;, \;\rangle^I) $$\\
 From now on, we fix an isomorphism $\cp = \mathbb{C}$.
 \begin{theorem}
 Let $\mathfrak{p}$ be a prime of $\tilde{K}$ such that $\mathfrak{p}_F$ is nonsplit and $(A_1,A_2) \in (\cm_{\Phi_1} \times \cm_{\Phi_2})(\ktildebarp)$ and there exists $f \in L(A_1,A_2)$ such that $\langle f,f\rangle \in F^{\times}$, then 
 $$(\widehat{L}(A_1,A_2),\langle\; ,\; \rangle) \cong (\widehat{O}_K,\beta x\bar{y})$$ where $\beta \in \widehat{F}^{\times}$ and $\beta O_F = \mathfrak{p}_F^{\epf}$ where
  \[ \epf  =  \begin{cases} 
      1 & \text{if}\; K/F \; is \; \text{unramified} \\
      0 & \text{if}\; K/F\; is \; \text{ramified} \\ 
   \end{cases}
\]
 \end{theorem}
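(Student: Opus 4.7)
The plan is to compute the Hermitian form prime by prime over $F$ and then patch the results. Decompose $\widehat{L}(A_1,A_2) = \prod_{\mathfrak{q}} L(A_1,A_2) \otimes_{O_F} O_{F,\mathfrak{q}}$ with $\mathfrak{q}$ running over finite primes of $F$; by Proposition 3.4 the $\mathfrak{q}$-factor is identified with $\Hom_{O_K}(A_1[q^\infty], A_2[\mathfrak{q}^\infty])$. It then suffices to determine, at each $\mathfrak{q}$, the local Hermitian $O_{K,\mathfrak{q}}$-module structure; standard patching of locally free rank-one Hermitian modules will assemble these into the claimed global isomorphism.

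At any $\mathfrak{q}$ not above $p$ (the residue characteristic of $\mathfrak{p}$), the prime-to-$p$ Tate module is unchanged by passage from $(A_1,A_2)$ to its canonical lift supplied by Proposition 3.1.2 of \cite{Howard2012}, and further by the specialization $\wtildep \hookrightarrow \cp = \mathbb{C}$. Consequently the $\mathfrak{q}$-factor of $\widehat{L}$ is identified with the $\mathfrak{q}$-factor of $\widehat{L}_{Betti}$ of the complex lift, and Proposition 3.6 gives a form $\beta_\mathfrak{q} x\bar{y}$ with $\beta_\mathfrak{q} \in O_{F,\mathfrak{q}}^\times$. For a prime $\mathfrak{q} \mid p$ with $\mathfrak{q} \neq \mathfrak{p}_F$, the local $p$-adic CM-types $\Phi_{1,\mathfrak{q}}$ and $\Phi_{2,\mathfrak{q}}$ agree because the distinguishing embedding $\phi^{sp}$ factors through $\mathfrak{p}_F$; Proposition 2.8 then applies to $A_1[\mathfrak{q}^\infty]$ and $A_2[\mathfrak{q}^\infty]$, so the hom space lifts bijectively to the canonical lift over $\wtildep$, and the same reduction to the Betti setting produces $\beta_\mathfrak{q} \in O_{F,\mathfrak{q}}^\times$.

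The decisive case is $\mathfrak{q} = \mathfrak{p}_F$. The pair $(A_1[\mathfrak{p}_F^\infty], A_2[\mathfrak{p}_F^\infty])$ with its $O_{K,\mathfrak{p}_F}$-actions and polarizations exactly fits the setup of Section 2.1 upon taking $\K = K_{\mathfrak{p}_F}$ and $\F = F_{\mathfrak{p}_F}$ with nearby $p$-adic CM-types $\Phi_{1,\mathfrak{p}_F}, \Phi_{2,\mathfrak{p}_F}$; the nonsplit hypothesis makes $K_{\mathfrak{p}_F}/F_{\mathfrak{p}_F}$ a quadratic field extension. Proposition 2.3 then yields an isomorphism of Hermitian $O_{K,\mathfrak{p}_F}$-modules $L(A_1[\mathfrak{p}_F^\infty], A_2[\mathfrak{p}_F^\infty]) \cong O_{K,\mathfrak{p}_F}$ with form $\beta_{\mathfrak{p}_F} x\bar{y}$, where $\beta_{\mathfrak{p}_F} \in F_{\mathfrak{p}_F}^\times$ and $\beta_{\mathfrak{p}_F} O_{F,\mathfrak{p}_F} = \mathfrak{p}_F^{\epf} O_{F,\mathfrak{p}_F}$. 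Combining with the unit local forms at the remaining primes and choosing a compatible global generator gives $(\widehat{L}(A_1,A_2), \langle\,,\,\rangle) \cong (\widehat{O}_K, \beta x\bar{y})$ with $\beta O_F = \mathfrak{p}_F^{\epf}$.

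The hardest step I expect is the primes $\mathfrak{q} \mid p$ with $\mathfrak{q} \neq \mathfrak{p}_F$: one must verify that the canonical lift produces a compatible comparison between the Dieudonné-theoretic Hermitian form on $A_i[\mathfrak{q}^\infty]$ and the Betti form on the generic fiber. This requires Proposition 2.8 to bijectively lift homomorphisms, compatibility of the polarizations under the canonical lift, and implicitly the ramification hypothesis $(\ast)$ of Section 2. A secondary technical point is that each local $\beta_\mathfrak{q}$ must actually lie in $F_\mathfrak{q}^\times$ rather than only in $K_\mathfrak{q}^\times$; this follows from the Hermitian symmetry $\beta_\mathfrak{q} = \overline{\beta_\mathfrak{q}}$ together with Lemma \ref{lemma2.2}.
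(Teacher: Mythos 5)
Your proposal follows essentially the same route as the paper: decompose $\widehat{L}(A_1,A_2)$ place by place, compare with the Betti lattice of the complex lift at every $\mathfrak{q}\neq\mathfrak{p}_F$ (via $q$-adic Tate modules when $q\neq p$ and via Proposition 2.8 when $\mathfrak{q}\mid p$), and invoke Proposition 2.3 at $\mathfrak{q}=\mathfrak{p}_F$ to produce $\beta_{\mathfrak{p}_F}$ with $\ord_{\mathfrak{p}_F}\beta_{\mathfrak{p}_F}=\epf$. The comparison you flag as the hardest step is closed in the paper by Tate's theorem on homomorphisms of $p$-divisible groups, which identifies $\Hom$ over $\wtildep$ with the Galois-invariants of $\Hom$ over $\cp$ and then concludes by a rank count on free $O_{K,\mathfrak{q}}$-modules of rank one; apart from supplying that tool, your argument matches the paper's.
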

 \begin{proof}
 There exists a unique lift of $(A_1,A_2)$ to $\cp$ and then the isomorphism of $\tilde{K}$-algebras $\cp \cong \mathbb{C}$, we may view the unique lift as a pair $(A_1^{\prime},A_2^{\prime})$ in $(\cm_{\Phi_1} \times \cm_{\Phi_2})(\mathbb{C})$. Let $\mathfrak{q} \subseteq O_F$ be a prime with the rational prime $q$ below it, then there are isomorphisms of Hermitian $O_{K,\mathfrak{q}}$-modules :
 $$L_{Betti}(A_1^{\prime},A_2^{\prime}) \otimes_{O_K} O_{K,\mathfrak{q}} \cong \text{Hom}_{O_K}(A_1^{\prime}[q^\infty], A_2^{\prime}[\mathfrak{q^{\infty}}])$$
 $$L(A_1,A_2) \otimes_{O_K} O_{K,\mathfrak{q}} \cong \text{Hom}_{O_K}(A_1[q^\infty],A_2[\mathfrak{q^{\infty}}])$$
 Now we have the
 \begin{lemma}
 With the notation as above, let $\mathfrak{q} \subseteq O_F$ be a prime $\neq \mathfrak{p}_F$, we have $O_K$-linear isomorphisms
 $$\text{Hom}_{O_K}(A_1^{\prime}[q^\infty],A_2^{\prime}[\mathfrak{q}^\infty]) \cong \text{Hom}_{O_K}(A_1[q^\infty], A_2[\mathfrak{q}^{\infty}])$$
 respecting Hermitian forms.
 \end{lemma}
 \begin{proof}
 Let $q$ be the rational prime below $\mathfrak{q}$, If $q \neq p$, then the $q$-adic Tate modules of $A_i^{\prime}, A_i$ are isomorphic, so that it is true.\\
For $q=p$ and $i= 1, 2$, consider $\Phi_i(\mathfrak{q})$ to be all of $\phi \in \Phi_i$ such that the inverse image of maximal ideal of $O_{\cp}$ in
$K \hookrightarrow \cp$ is $\mathfrak{q}$. Now we have that $\Phi_1(\mathfrak{q}) = \Phi_2(\mathfrak{q})$ because $\phi^{sp}$ cannot be in $\Phi_1(\mathfrak{q})$ and similarly $\overline{\phi^{sp}}$ cannot be in $\Phi_2(\mathfrak{q})$ (as $\mathfrak{q}$ is not $\p_F$ by hypothesis). So by passing to p-divisible groups $A_i[\mathfrak{q}^{\infty}]$ with the action of $O_{K,\mathfrak{q}}$, we are in the situation of proposition 2.8 and letting $A_i^{can}$ be the canonical lifting of $A_i$ to $\tilde{\mathbb{W}}_{\p}$, we have
$$\Hom_{O_K}(A_1^{can}[p^{\infty}], A_2^{can}[\mathfrak{q}^{\infty}]) \rightarrow \Hom_{O_K}(A_1[p^{\infty}], A_2[\mathfrak{q}^{\infty}])$$
is an isomorphism, now by the base change $\wtildep \rightarrow \cp$ we get a map
\begin{equation}
\label{eq3.1}
\Hom_{O_K}(A_1^{can}[p^{\infty}], A_2^{can}[\mathfrak{q}^{\infty}]) \rightarrow \Hom_{O_K}(A_1^{\prime}[p^{\infty}], A_2^{\prime}[\mathfrak{q}^{\infty}])
\end{equation}
Now we have Tate's theorem that says for two $p$-divisible groups $G, H$ with Tate modules $TG, TH$ (over specific types of rings $R$, which includes $\wtildep$ and $\cp$ with $E = Frac(R)$) the map
$$\Hom(G,H) \rightarrow \Hom_{\text{Gal}(\bar{E}/E)}(TG, TH)$$
is an isomorphism. So that the map in equation \ref{eq3.1} is injective with image the submodule of $\text{Aut}(\cp/Frac(\wtildep))$-invariants of $\Hom_{O_K}(A_1^{\prime}[p^{\infty}], A_2^{\prime}[\mathfrak{q}^{\infty}])$, so that its cokernel is $\mathbb{Z}_p$-torsion free. Now both sides of the composite map $\Hom_{O_K}(A_1[p^{\infty}],A_2[\mathfrak{q}^{\infty}]) \rightarrow \Hom_{O_K}(A_1^{\prime}[p^{\infty}],A_2^{\prime}[\mathfrak{q}^{\infty}])$ are free of rank 1 over $O_{K,\mathfrak{q}}$, so that it is an isomorphism. It is obvious that the isomorphism respects Hermitian forms. 
 \end{proof}
 Now to prove the theorem, we just collect the propositions we proved to get:\
 If $\mathfrak{q} \neq \mathfrak{p}_F$, then 
 $$L(A_1,A_2) \otimes_{O_F} O_{F,\mathfrak{q}} \cong O_{K,\mathfrak{q}}$$ with Hermitian form $\beta_\mathfrak{q} x \bar{y} $ with $\beta_{\mathfrak{q}} \in O_{F,\mathfrak{q}}^{\times}$.\\
If $\mathfrak{q} = \mathfrak{p}_F$, then we use prop 2.3 to get 
 $\beta_\mathfrak{q} O_{F,\mathfrak{q}} = \mathfrak{p}_F^{\epf}$.\\
 Now set $\beta = \prod_{\mathfrak{q}} \beta_{\mathfrak{q}}$, so that the ideal of $L(A_1,A_2)$ is $\beta O_F = \mathfrak{p}_F^{\epf}$. So we have that $\widehat{L}(A_1,A_2)$ isomorphic to $\widehat{O}_K$ with Hermitian form given by $\beta x \bar{y}$. Now to prove $\chi_{K/F}(\beta)=1$, we have that $L(A_1,A_2) \otimes_{O_K} K$ is $K$ with the Hermitian form given by $c x \bar{y}$ with $c \in F^{\gg 0}$ and $\beta$ differs by norm with $c$ at each place of $K$, so that
 $$\chi_{K/F}(\beta) = \chi_{K/F}(c) = 1$$
 \end{proof}
Let $S$ be an $\tilde{O}$-scheme, then to each $(A_1,A_2) \in (\cm_{\Phi_1} \times_{\tilde{O}} \cm_{\Phi_2})(S)$, we have $(L(A_1,A_2),\langle\; ,\; \rangle)$.\\
Let $\mathcal{Z}(\alpha)$ (for $\alpha \in F^{\times}$ totally positive) be the algebraic stack that for an $\tilde{O}$-scheme $S$ assigns the groupoid of triples $(A_1,A_2,f)$, where $(A_1,A_2)\in (\cm_{\Phi_1} \times_{\tilde{O}} \cm_{\Phi_2})(S)$ and $f$ is an element of $L(A_1,A_2)$ with $\langle f,f \rangle = \alpha$.
\begin{proposition} Let $\alpha \in F^{\times}$,\\
1) Suppose that $\alpha \gg 0$, then the stack $\mathcal{Z}(\alpha)$ has dimension 0 and is supported in nonzero characteristics.\\
2) If $\mathfrak{p}$ is a prime in $\tilde{K}$ for which $\mathcal{Z}(\alpha)(\ktildebarp)$ is nonempty, then $\mathfrak{p}_F$ is nonsplit.
\end{proposition}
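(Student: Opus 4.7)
The plan is to deduce both parts from Proposition 3.2 and the Remark 3.3 that follows it, combined with the structural results already established in the section.

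For part 2, I take $(A_1,A_2,f) \in \Z(\alpha)(\ktildebarp)$. Since $\alpha \in F^{\times}$, we have $\langle f,f\rangle = \alpha \neq 0$, so Proposition 3.2 produces an $O_K$-linear isogeny $A_1 \to A_2$. Reducing the induced $O_K$-equivariant map of Lie algebras modulo $\mathfrak{p}$ and combining with the $\Phi_i$-determinant conditions on $A_i$, the fact that $\Phi_1$ and $\Phi_2$ differ only in $\phispec$ versus $\phispbar$ forces $\phispec(x) \equiv \phispbar(x) \pmod{\mathfrak{p}}$ for every $x \in O_K$, hence $\mathfrak{p}_K = \overline{\mathfrak{p}_K}$, i.e.\ $\mathfrak{p}_F$ is nonsplit in $K$. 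This is precisely the argument spelled out in Remark 3.3.

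For part 1, the claim that $\Z(\alpha)$ has support in positive characteristic is immediate from Proposition 3.2: any geometric point supplies an $f$ with $\langle f,f\rangle = \alpha \neq 0$ (as $\alpha \gg 0$), so the residue characteristic cannot be zero. For dimension zero, by part 2 it suffices to bound $\Z(\alpha)(\ktildebarp)$ at primes $\mathfrak{p}$ with $\mathfrak{p}_F$ nonsplit. Finiteness of each $\cm_{\Phi_i}(\ktildebarp)$ is classical (CM abelian varieties of fixed CM-type form finitely many isomorphism classes). For a fixed pair $(A_1,A_2)$, Proposition 3.4 together with Theorem 3.7 identifies $L(A_1,A_2)$ as a projective $O_K$-module of rank one carrying the Hermitian form $\beta x \bar{y}$ for some $\beta \in \widehat{F}^{\times}$, so the count reduces to solutions of $\beta x \bar{x} = \alpha$ in a rank-one $O_K$-module. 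Any two such solutions differ by a norm-one unit of $O_K$, which must be a root of unity of $K$, and these form a finite group. So $\Z(\alpha)$ has $0$-dimensional support concentrated in closed fibres of $\mathrm{Spec}\,\tilde{O}$.

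The only real subtlety is the determinant-condition argument underlying Remark 3.3: one has to carefully exploit both the $O_K$-linearity of the isogeny and the $\Phi_i$-determinant conditions on the Lie algebras to extract the congruence $\phispec \equiv \phispbar \pmod{\mathfrak{p}}$. Once that step is in hand, the proposition reduces to routine bookkeeping about primes of $K$ above $\mathfrak{p}_F$ and the known finiteness properties of CM moduli.
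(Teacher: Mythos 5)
Your proposal is correct, and two of its three claims are proved exactly as in the paper: part 2 is Remark 3.3 verbatim, and the positive-characteristic claim in part 1 is the direct appeal to Proposition 3.2. Where you genuinely diverge is the dimension-zero claim. The paper argues locally: the forgetful map $\Z(\alpha) \rightarrow \cm_{\Phi_1} \times_{\tilde{O}} \cm_{\Phi_2}$ is unramified (rigidity of homomorphisms of abelian schemes), so each strictly Henselian local ring $O^{\text{\'et}}_{\Z(\alpha),z}$ is a quotient of $\wtildep$, and since $\Z(\alpha)$ has no characteristic-zero points this quotient is proper, hence Artinian, giving dimension $0$; this local description is also what makes the length computation of Theorem 3.11 meaningful later. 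You instead argue globally, showing each fibre $\Z(\alpha)(\ktildebarp)$ is finite via finiteness of $\cm_{\Phi_i}(\ktildebarp)$ and the rank-one Hermitian structure of $L(A_1,A_2)$; combined with the absence of characteristic-zero points, every irreducible component is then a closed point of a closed fibre, so the conclusion follows. Your route essentially anticipates the counting of Theorem 3.10, while the paper's anticipates the deformation theory of Theorem 3.11; both are valid, though the paper's is shorter and self-contained at this stage. One small imprecision on your side: two solutions of $\langle f,f\rangle = \alpha$ in the rank-one module $L(A_1,A_2)$ differ a priori by a norm-one element of $K^{\times}$, not by a unit of $O_K$; finiteness still holds because the norm-one elements form a compact set at the archimedean places and $L(A_1,A_2)$ is a lattice (equivalently, a norm-one element carrying the module into itself is a root of unity), so this does not affect the argument.
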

\begin{proof}
1) Second part of the statement comes from proposition 3.2. For the first part, we have that the map of stacks $\Z(\alpha) \rightarrow \cm_{\Phi_1} \times_{\tilde{O}} \cm_{\Phi_2}$ sending $(A_1,A_2,f)$ to $(A_1,A_2)$ is unramified, so that each of $\widehat{O}_{\Z(\alpha),z}$ is a quotient of $\wtildep$, but it can not be the whole $\wtildep$ because $\Z(\alpha)$ does not have points in characteristic 0, so it has to be of dimension 0, and so $\Z(\alpha)$ has dimension zero.\\
2) See Remark 3.3.
\end{proof}

Now we count the number of $\ktildebarp$ points of the stack $\Z(\alpha)$. For an fractional ideal $I$ of $O_F$ let $\rho(I)$ be the number of fractional ideals of $O_K$ with norm equal to $I$ in $F$, then
\begin{theorem}
Let $\alpha \in F^{\gg 0}$, assume that $K/F$ is not unramified at all finite primes, If $\mathfrak{p}$ is a prime such that $\mathfrak{p}_F$ is nonsplit, then
$$\# \Z(\alpha)(\ktildebarp) =\sum_{(A_1,A_2,f) \in Z(\alpha)(\ktildebarp)} \frac{1}{\# \text{Aut}(A_1,A_2,f)} = \frac{|C_K|}{w(K)} \rho(\alpha \mathfrak{p}_F^{-\epf} O_F)$$
\end{theorem}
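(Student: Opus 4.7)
The proof will proceed by a class-group orbit argument parallel to Howard's approach in the imaginary quadratic case (\cite{Howard2012}). The skeleton iterates over isomorphism classes of $A_1 \in \cm_{\Phi_1}(\ktildebarp)$, then of $A_2$ with $A_1$ fixed, then of $f \in L(A_1,A_2)$. By CM theory and the canonical lift (Proposition 3.1.2 of \cite{Howard2012}), $\cm_{\Phi_1}(\ktildebarp)$ has $|C_K|$ isomorphism classes each with $\text{Aut} = \mu(K)$ of order $w(K)$; once $A_1$ is fixed, an extension of Proposition 3.5 to $\ktildebarp$ via canonical lifts endows the isomorphism classes of $A_2 \in \cm_{\Phi_2}(\ktildebarp)$ with a free transitive $C_K$-action through the Serre construction $A_2 \mapsto A_2^I$. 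By Proposition 3.2 and the supersingularity of all such objects, every $A_2$ is isogenous to $A_1$, so this $C_K$-torsor exhausts $\cm_{\Phi_2}(\ktildebarp)$.

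Next, I would globalize Theorem 3.7: as a projective rank-one $O_K$-module, $L(A_1,A_2) \cong \mathfrak{a}$ for some fractional ideal $\mathfrak{a}$ (whose class records the $C_K$-parameter of $A_2$ relative to $A_1$), with Hermitian form $(x,y) \mapsto \beta x\bar y$ for some $\beta \in F^\times$ satisfying $\beta O_F \cdot N_{K/F}(\mathfrak{a}) = \mathfrak{p}_F^{\epf}$ (the local valuations from Theorem 3.7 combined with the trivialization of $\mathfrak{a}$ at each prime). The condition $\langle f,f\rangle = \alpha$ then becomes $\beta x\bar x = \alpha$, equivalently $N_{K/F}((x)\mathfrak{a}^{-1}) = \alpha\mathfrak{p}_F^{-\epf} O_F$, so each nonzero $f$ determines an integral $O_K$-ideal $\mathfrak{b} = (x)\mathfrak{a}^{-1}$ of that norm, with $\mu(K) = O_K^\times$ acting freely on the choice of generator $x$. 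Summing over the $C_K$-orbit of $A_2$'s (equivalently over $[\mathfrak{a}] \in C_K$), each integral $\mathfrak{b}$ of the specified norm appears for the unique class $[\mathfrak{a}] = [\mathfrak{b}]^{-1}$, giving $\sum_{A_2} \#\{f \in L(A_1,A_2) : \langle f,f\rangle = \alpha\} = w(K)\,\rho(\alpha\mathfrak{p}_F^{-\epf} O_F)$ with $A_1$ held fixed.

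Finally, the isomorphism classes of triples with $A_1$ fixed are scalar $\mu(K)$-orbits of $f$, yielding $\rho(\alpha\mathfrak{p}_F^{-\epf} O_F)$ such classes per $A_1$; multiplying by $|C_K|$ for the choice of $A_1$ and dividing by $\#\text{Aut}(A_1,A_2,f) = w(K)$ produces the claimed formula. Here $\text{Aut}$ is the diagonal of $\mu(K) \times \mu(K)$, since any automorphism $(u_1,u_2)$ satisfies $u_2 \circ f = f \circ u_1$, which forces $u_1 = u_2$ by $O_K$-linearity and nonvanishing of $f$. The main technical obstacle will be the global refinement of Theorem 3.7 (which as stated only asserts the adelic Hermitian structure) together with the extension of Proposition 3.5 from $\mathbb{C}$-points to $\ktildebarp$-points; both reduce to the canonical lift to $\wtildep$ followed by base change to $\cp$, augmented by Proposition 2.8 to propagate $O_K$-linear homomorphisms across the reduction.
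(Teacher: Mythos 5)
Your overall architecture (fix $A_1$, let $A_2$ range over a $C_K$-orbit, count $f$ by attaching to it the integral ideal $\mathfrak{b}=(x)\mathfrak{a}^{-1}$ of norm $\alpha\mathfrak{p}_F^{-\epf}O_F$) is close in spirit to the paper's, but the crucial counting step is asserted rather than proved, and as literally stated it is incorrect. You write that ``each integral $\mathfrak{b}$ of the specified norm appears for the unique class $[\mathfrak{a}]=[\mathfrak{b}]^{-1}$,'' but the sum is over $C_K$, not over the ideal class group of $K$: the natural map $C_K\to \mathrm{Cl}(K)$, $(I,\zeta)\mapsto[I]$, is in general neither injective nor surjective (its kernel is $O_F^{\times\gg0}/N_{K/F}(O_K^{\times})$, and indeed the paper records $|C_K|\neq h(K)$ in general). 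Moreover, for a fixed $A_2$ in the orbit with $L(A_1,A_2)\cong\mathfrak{a}$ and form $\beta x\bar y$, a generator $x$ of $\mathfrak{b}\mathfrak{a}$ only gives $\beta x\bar x=u\alpha$ for some totally positive unit $u\in O_F^{\times\gg0}$; whether $u$ can be absorbed into $x$ is an obstruction in $O_F^{\times\gg0}/N_{K/F}(O_K^{\times})$. So a single ideal class $[\mathfrak{b}]^{-1}$ corresponds to several elements of $C_K$ (a torsor under $\ker(C_K\to\mathrm{Cl}(K))$), of which only one yields solutions of $\langle f,f\rangle=\alpha$ on the nose, each then contributing $w(K)$ maps $f$. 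Your count happens to come out right only because these two effects cancel, and that cancellation is exactly the content you have omitted.

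The paper handles this point by adelizing: it introduces the torus $H=\ker(N_{K/F})$, the subgroup $C_K^{\circ}=H(F)\backslash H(\widehat F)/U$, and the exact sequence $1\to C_K^{\circ}\to C_K\to\widehat O_F^{\times}/N_{K/F}\widehat O_K^{\times}\xrightarrow{\eta}\{\pm1\}$; the inner count over $C_K^{\circ}$ unfolds to an orbital integral $O_{\alpha}(A_1,A_2)=\prod_v O_{\alpha,v}$ whose local factors are computed case by case (split versus nonsplit $v$) to give $\rho(\alpha\beta^{-1}O_F)$, and the sum over $C_K/C_K^{\circ}$ is controlled by the observation that exactly one $\xi\in\ker\eta$ contributes. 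To repair your argument you must either reproduce this adelic unfolding or supply the classical genus-theory bookkeeping: show that for each $\mathfrak{b}$ with $N_{K/F}(\mathfrak{b})=\alpha\mathfrak{p}_F^{-\epf}O_F$ there is exactly one element of $C_K$ (not of $\mathrm{Cl}(K)$) for which the unit obstruction vanishes, using that the relevant classes do lie in the image of $C_K\to\mathrm{Cl}(K)$ because their norms are totally positively generated. The remaining ingredients of your sketch (simple transitivity of $C_K$ on $\cm_{\Phi_i}(\ktildebarp)$, the globalization of Theorem 3.7 to $L(A_1,A_2)\cong\mathfrak{a}$ with a totally positive $\beta$ satisfying $\beta N_{K/F}(\mathfrak{a})=\mathfrak{p}_F^{\epf}$, and the identification of $\mathrm{Aut}(A_1,A_2,f)$ with the diagonal $\mu(K)$) are consistent with the paper and acceptable.
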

\begin{proof}
First we define a subgroup of $C_K$ denoted $C_K^{\circ}$: Let $H$ be the algebraic group over $F$ defined for an $F$-algebra $R$ to be $H(R) = \ker({N}_{K/F} \otimes \text{id}: (K \otimes_F R)^{\times} \rightarrow (F \otimes_F R)^{\times})$ and a compact open subgroup of $\widehat{F}$-points of it, $U = \ker({N}_{K/F}: \widehat{O}_K^{\times} \rightarrow \widehat{O}_F^{\times}) \subseteq H(\widehat{F})$. Let $C_K^{\circ}$ be the double quotient (it is a finite group) $H(F) \backslash H(\widehat{F}) / U$. 
Also let $\eta: \widehat{O}_F^{\times}/{N}_{K/F}\widehat{O}_K^{\times} \rightarrow \lbrace \pm 1 \rbrace $ be the restriction of $\chi_{K/F}$ to $\widehat{O}_F^{\times}$.

Now we have the exact sequence
\begin{equation}
\label{eq3.2}
1 \rightarrow C_K^{\circ} \rightarrow C_K \rightarrow \widehat{O}_F^{\times}/\text{Norm}\widehat{O}_K^{\times} \xrightarrow{\eta} \lbrace \pm 1 \rbrace 
\end{equation}
For a set $S$, let $1_S$ be the characteristic function of $S$, we compute
\begin{align*}
&\sum_{I \in C_K^{\circ}} \# \lbrace f \in L(A_1,A_2^{I}) | \langle f,f \rangle^{I} = \alpha \rbrace \\
&= \sum_{I \in C_K^{\circ}} \sum_{\substack{x \in I.L(A_1,A_2) \otimes \mathbb{Q} \\ \langle x,x \rangle = \alpha}} 1_{I.L(A_1,A_2)}(x) \\
&= \sum_{h \in H(F) \backslash H(\widehat{F}) / U} \sum_{\substack{x \in L(A_1,A_2) \otimes \mathbb{Q} \\ \langle x,x \rangle = \alpha}} 1_{\widehat{L}(A_1,A_2)}(h^{-1}x) \\
&= \#(H(F) \cap U) \sum_{h \in H(\widehat{F})/U}
\sum_{\substack{x \in H(F)\backslash L(A_1,A_2) \otimes \mathbb{Q} \\ \langle x,x \rangle = \alpha}} 1_{\widehat{L}(A_1,A_2)}(h^{-1}x)
\end{align*}
Let $\mu(K)$ be the roots of unity in $K$, then for $z \in \Z(\alpha)(\ktildebarp)$, we have $Aut\; A \cong \mu(K)$, so that $Aut(A_1,A_2^{I}) \cong \mu(K)^2$, also we have $H(F) \cap U = \mu(K)$. Thus, we get
$$\sum_{I \in C_K^{\circ}} \sum_{\substack{f \in L(A_1,A_2^I) \\ \langle f,f \rangle^{I} = \alpha}} \frac{w(K)}{\# Aut(A_1,A_2)} = \sum_{h \in H(\widehat{F})/U} \sum_{\substack{x \in H(F) \backslash L(A_1,A_2) \otimes \mathbb{Q} \\ \langle x,x \rangle = \alpha}} 1_{\widehat{L}(A_1,A_2)}(h^{-1}x)$$
If there exists an $x \in H(F)\backslash L(A_1,A_2) \otimes \mathbb{Q}$ satisfying $\langle x,x \rangle = \alpha$, then we get a simply transitive action of $H(\widehat{F})$ on all such $x$ and so the last sum can be removed with fixing an $x \in L(A_1,A_2) \otimes \mathbb{Q}$ with the property $\langle x,x \rangle = \alpha$:
$$\sum_{I \in C_K^{\circ}} \sum_{\substack{f \in L(A_1,A_2^I) \\ \langle f,f \rangle^{I} = \alpha}} \frac{w(K)}{\# Aut(A_1,A_2)} = \frac{1}{w(K)} \sum_{h \in H(\widehat{F})/U} 1_{\widehat{L}(A_1,A_2)}(h^{-1}x)$$

Summing above over $I \in C_K / C_K^{\circ}$ and using exactness of equation (\ref{eq3.2}), we have
\begin{equation}
\label{eq3.3}
\sum_{I \in C_K} \sum_{\substack{f \in L(A_1,A_2^{I})\\ \langle f,f \rangle^I = \alpha}} \frac{1}{\# Aut(A_1,A_2^I)} = \frac{1}{w(K)} \sum_{\xi \in \ker \eta} \sum_{h \in H(\widehat{F})/U} 1_{\widehat{L}(A_1,A_2)}(h^{-1}x_{\xi})
\end{equation}
where $x_{\xi}$ is an element of $L(A_1,A_2) \otimes \widehat{\mathbb{Q}}$ such that $\langle x_{\xi}, x_{\xi} \rangle = \xi \alpha$. Now let's fix $x \in \widehat{K}$ such that $\alpha = \beta x \bar{x}$ (recall that $\widehat{L}(A_1,A_2)$ has Hermitian form $\beta x \bar{y}$) and compute the orbital integral
$$O_{\alpha}(A_1,A_2) = \sum_{h \in  H(\widehat{F})/U} 1_{\widehat{L}(A_1,A_2)}(h^{-1}x)$$
Also define the local orbital integrals 
$$O_{\alpha,v}(A_1,A_2) = \sum_{h \in H({F_v})/U_v} 1_{O_{K,v}}(h^{-1}x_v)$$
Thus we can factor $O_{\alpha}(A_1,A_2)$ as 
$$O_{\alpha}(A_1,A_2) = \prod_{v} O_{\alpha,v}(A_1,A_2)$$
where the product is over the finite places of $F$. If $v$ is nonsplit in $K$, then
 \[ O_{\alpha,v}(A_1,A_2)  =  \begin{cases} 
      1 & \text{if}\; \alpha\beta^{-1} \in O_{F,v} \\
      0 & \text{}\; \text{otherwise} \\ 
   \end{cases}
\]
and if $v$ is split in $K$, then $K_v \cong F_v \otimes F_v$ and by fixing a uniformizer $\pi \in F_v$, we find that $H(F_v)/U_v$ is the cyclic group generated by $(\pi,\pi^{-1}) \in F_v^{\times} \times F_v^{\times}$ and
 \[ O_{\alpha,v}(A_1,A_2)  =  \begin{cases} 
      1+\text{ord}_{v}(\alpha_v \beta_v^{-1}) & \text{if}\; \alpha\beta^{-1} \in O_{F,v} \\
      0 & \text{if}\; \text{otherwise} \\ 
   \end{cases}
\]
So that $O_{\alpha,v}(A_1,A_2)$ is the number of ideals $\mathfrak{a} \subseteq O_{K,v}$ such that
$$\beta_v \mathfrak{a}_v \bar{\mathfrak{a}}_v = \alpha O_{F,v}$$
and therefore $O_{\alpha}(A_1,A_2) = \rho(\alpha \beta^{-1} O_F)$.

Now in the RHS of equation (\ref{eq3.3}) we have that there is a unique $\xi \in \ker \eta$ such that the sum is not zero (because we must have that $\rho(\alpha \beta^{-1} O_F) \neq 0$ and this means that there exists an ideal $\mathfrak{a} \subseteq O_K$ such that $\alpha O_F = \beta \mathfrak{a} \bar{\mathfrak{a}}$ and so this means that there's a unique $\xi \in \widehat{O}_F^{\times}/\text{Norm}(\widehat{O}_K^{\times})$ that $\xi \alpha$ is represented by $\beta x \bar{x}$, also as $\chi_{K/F}(\beta)=1$, we get $\xi \in \ker \eta$). Thus, equation (\ref{eq3.3}) is
$$\sum_{\xi \in \ker \eta} O_{\xi \alpha}(A_1,A_2) = \rho(\alpha\beta^{-1}O_F) = \rho(\alpha \p_F^{-\epsilon_{\p_F}})$$
Therefore as $C_K$ acts simply transitively on $\cm_{\Phi}(\ktildebarp)$  for a CM-type $\Phi$ of $K$, we get
$$\sum_{\substack{A_1 \in \cm_{\Phi_1}(\ktildebarp) \\ A_2 \in \cm_{\Phi_2}(\ktildebarp)}} \sum_{\substack{f \in L(A_1,A_2) \\ \langle f,f \rangle = \alpha}} \frac{1}{\# Aut(A_1,A_2)} = \frac{|C_K|}{w(K)}\rho(\alpha \p^{-\epsilon_{\p_F}})$$
which completes the proof.
\end{proof}

Now we have
\begin{theorem}
\label{thm3.11}
Let $\alpha \in F^{\gg 0}$, $\mathfrak{p}$ be a prime of $\tilde{K}$ such that $\mathfrak{p}_F$ is nonsplit in $K$, then at $z \in \mathcal{Z}(\alpha)(\ktildebarp)$:
$$length(O^{\text{\'{e}}t}_{\mathcal{Z}(\alpha),z}) = \frac{1}{2} e_{\mathfrak{p}_F}(\text{ord}_{\mathfrak{p}_F}(\alpha) + 1)$$
where $e_{\mathfrak{p}_F}$ is the ramification index of $\phispec(\mathfrak{p}_F)$ in $\tilde{K}/F$. 
\end{theorem}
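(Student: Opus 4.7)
The plan is to identify $O^{\text{\'{e}}t}_{\Z(\alpha), z}$ with a quotient of $\wtildep$ whose length is governed by the maximal level to which the homomorphism $f$ lifts, and then to invoke Theorem \ref{Thm2.4}. Write $z = (A_1, A_2, f)$ with $\langle f, f\rangle = \alpha$. By Proposition 3.1.2 of \cite{Howard2012}, each of $A_1$ and $A_2$ has a unique canonical lift to $\wtildep$, and hence so does the pair $(A_1, A_2)$. Since the forgetful morphism $\Z(\alpha) \to \cm_{\Phi_1} \times_{\tilde{O}} \cm_{\Phi_2}$ is unramified (the condition $\langle f,f\rangle=\alpha$ is closed, and $f$ is rigid in each deformation), the strict henselization $O^{\text{\'{e}}t}_{\Z(\alpha), z}$ is a quotient of $\wtildep$. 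Concretely, it equals $\wtildep/\m^{k}$, where $k$ is the largest integer such that $f$ extends to an $O_K$-linear homomorphism between the canonical deformations of $A_1, A_2$ over $R^{(k)} := \wtildep/\m^{k}$, and its length is $k$.

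The next step is to localize the obstruction at the prime $\mathfrak{p}_F$. By Serre--Tate, deformations of $(A_1, A_2)$ with their $O_K$-actions and principal polarizations correspond to deformations of their $p$-divisible groups, while the prime-to-$p$ Tate modules deform uniquely. The splitting $O_K \otimes \mathbb{Z}_p = \prod_{\mathfrak{q}|p} O_{K,\mathfrak{q}}$ decomposes each $A_i[p^\infty]$ as $\prod_{\mathfrak{q}|p} A_i[\mathfrak{q}^\infty]$, and the lifting problem for $f$ decomposes accordingly into independent problems at each $\mathfrak{q}$. Because $\Phi_1$ and $\Phi_2$ differ only at the embedding $\phi^{sp}$, whose associated place of $K$ lies above $\mathfrak{p}_F$, the $\mathfrak{q}$-divisible groups $A_1[\mathfrak{q}^\infty]$ and $A_2[\mathfrak{q}^\infty]$ carry the \emph{same} local $p$-adic CM-type for each $\mathfrak{q}\neq\mathfrak{p}_F$. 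Proposition \ref{prop2.8} then shows that the $\mathfrak{q}$-component of $f$ lifts uniquely over every $R^{(k)}$ and contributes no obstruction.

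All obstruction therefore comes from the pair $A_i[\mathfrak{p}_F^\infty]$, which fits the framework of Section 2.1 with local fields $\K := K_{\mathfrak{p}_F}$ and $\F := F_{\mathfrak{p}_F}$ and nearby $p$-adic CM-types; the hypothesis that $\mathfrak{p}_F$ is nonsplit ensures that $\K/\F$ is a quadratic field extension. Taking the local $\tilde{\K}$ of Section 2.1 to be the completion $\tilde{K}_{\mathfrak{p}}$, Theorem \ref{Thm2.4} gives
\[
k \;=\; \tfrac{1}{2}\,\ord_{O_{\tilde{K}_{\mathfrak{p}}}}\!\bigl(\alpha\,\mathfrak{p}_F\bigr).
\]
Since $\ord_{\mathfrak{p}}(\alpha) = e_{\mathfrak{p}_F}\,\ord_{\mathfrak{p}_F}(\alpha)$ and $\ord_{\mathfrak{p}}(\mathfrak{p}_F) = e_{\mathfrak{p}_F}$, this simplifies to $k = \tfrac{1}{2}\,e_{\mathfrak{p}_F}\bigl(\ord_{\mathfrak{p}_F}(\alpha)+1\bigr)$, matching the claim.

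The main obstacle is the second step: establishing rigorously that the deformation problem for $(A_1, A_2, f)$ decomposes along the primes of $O_K$ above $p$ in a way compatible with the polarizations and the Hermitian form, so that the global length reduces to the sum of local contributions --- of which only the one at $\mathfrak{p}_F$ is nonzero. Once this decomposition is in place, Proposition \ref{prop2.8} handles the unobstructed primes, Theorem \ref{Thm2.4} handles $\mathfrak{p}_F$, and the valuation identity above finishes the argument.
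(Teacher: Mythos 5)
Your proposal is correct and follows essentially the same route as the paper: Serre--Tate to pass to $p$-divisible groups, decomposition of $A_i[p^\infty]$ along the primes $\mathfrak{q}\mid p$ of $O_K$, Proposition \ref{prop2.8} to dispose of the unobstructed components at $\mathfrak{q}\neq\mathfrak{p}_F$, and Theorem \ref{Thm2.4} at $\mathfrak{p}_F$ followed by the same valuation identity. The compatibility issue you flag at the end is real but is treated no more explicitly in the paper's own proof.
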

\begin{proof}
We have the decomposition \[ A[p^{\infty}] = \prod_{\substack{\mathfrak{q} \subseteq O_F \\ \mathfrak{q}|p}} A[\mathfrak{q}^{\infty}]\] Now we have a map $f:A_1 \rightarrow A_2$ and by Serre-Tate, the deformation functor of $(A_1, A_2, f)$ is the same as the deformation functor of 
$(A_1[p^{\infty}], A_2[p^{\infty}], f[p^\infty]: A_1[p^\infty] \rightarrow A_2[p^\infty]$), now we have the decompositions
$$f[\mathfrak{q}^{\infty}]: A_1[\mathfrak{q}^\infty] \rightarrow A_2[\mathfrak{q}^\infty]$$
Case 1) $\mathfrak{q} \neq \mathfrak{p}_F$, then we can use proposition \ref{prop2.8} to prove that the points $(A_1[\mathfrak{q}^{\infty}],A_2[\mathfrak{q}^{\infty}],f[\mathfrak{q}^\infty])$ can always be deformed to objects of $\art$. \\
Case 2) $\mathfrak{q} = \mathfrak{p}_F$, we get that the deformation functor of $(A_1[\mathfrak{p}_F^{\infty}],A_2[\mathfrak{p}_F^{\infty}],f[\mathfrak{p}_F^{\infty}])$ is pro-represented by $\tilde{\W}_{\mathfrak{p}}/\m^k$ ($m$ is the maximal of $\tilde{\W}_{\mathfrak{p}}$) where $k = \frac{1}{2}\text{ord}_{\mathfrak{p}}(\alpha \mathfrak{p}_F)= \frac{1}{2} e_{\mathfrak{p}_F} (\text{ord}_{\mathfrak{p}_F}(\alpha) + 1)$ by Theorem \ref{Thm2.4}.
\end{proof}
Collecting everything together we get the concluding theorem:
\begin{theorem}
\label{thm3.12}
 We have
$$\widehat{\deg}\mathcal{Z}(\alpha) = \frac{|C_K|}{w(K)} \sum_{\mathfrak{p} \subset O_F} \frac{\log N(\mathfrak{p})}{[K:\mathbb{Q}]}(\ord_{\mathfrak{p}}(\alpha)+1)\rho(\alpha \mathfrak{p}^{-e_{\mathfrak{p}}})$$ where $\mathfrak{p}$ goes over the nonsplit primes of $F$.
\end{theorem}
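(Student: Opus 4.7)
The plan is to assemble Theorems 3.10 and \ref{thm3.11} into the definition of $\widehat{\deg}\Z(\alpha)$ and convert the resulting sum over primes of $\tilde{O}$ into a sum over primes of $F$. First, by Proposition 3.9(2) the inner sum $\sum_{z \in \Z(\alpha)(\ktildebarp)}$ vanishes unless $\mathfrak{p}_F := \p \cap F$ is nonsplit in $K$, so only nonsplit primes of $F$ contribute.

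Next, for $\p$ lying over a nonsplit $\mathfrak{p}_F$, Theorem 3.10 gives
$$\sum_{z \in \Z(\alpha)(\ktildebarp)} \frac{1}{\# \text{Aut}\, z} = \frac{|C_K|}{w(K)}\,\rho(\alpha\,\mathfrak{p}_F^{-\epf}\,O_F),$$
while Theorem \ref{thm3.11} gives $\text{length}(O^{\text{\'et}}_{\Z(\alpha),z}) = \frac{1}{2}\,e_{\mathfrak{p}_F}(\ord_{\mathfrak{p}_F}(\alpha)+1)$ at every such $z$. Since $\tilde{K}/\mathbb{Q}$ is Galois (hence so is $\tilde{K}/F$), the ramification and residue invariants of $\p/\mathfrak{p}_F$ are independent of the choice of $\p$ above $\mathfrak{p}_F$, so both quantities depend only on $\mathfrak{p}_F$. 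Substituting into the definition of the Arakelov degree and grouping primes by their image $\mathfrak{p}_F$ in $O_F$, I obtain
$$\widehat{\deg}\Z(\alpha) = \frac{|C_K|}{2\,w(K)\,[\tilde{K}:\mathbb{Q}]} \sum_{\mathfrak{p}_F \text{ nonsplit}} e_{\mathfrak{p}_F}(\ord_{\mathfrak{p}_F}(\alpha)+1)\,\rho(\alpha\,\mathfrak{p}_F^{-\epf}\,O_F) \sum_{\p \mid \mathfrak{p}_F} \log N(\p).$$

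The main remaining step is to evaluate the inner sum. Writing $f_{\mathfrak{p}_F}$ for the common residue degree of primes of $\tilde{O}$ above $\mathfrak{p}_F$, there are $[\tilde{K}:F]/(e_{\mathfrak{p}_F}f_{\mathfrak{p}_F})$ such primes, each satisfying $\log N(\p) = f_{\mathfrak{p}_F}\log N(\mathfrak{p}_F)$, so $\sum_{\p\mid\mathfrak{p}_F}\log N(\p) = \frac{[\tilde{K}:F]}{e_{\mathfrak{p}_F}}\log N(\mathfrak{p}_F)$. Substituting back, the factors $e_{\mathfrak{p}_F}$ cancel; using $[\tilde{K}:\mathbb{Q}] = [\tilde{K}:F][F:\mathbb{Q}]$ together with $[K:\mathbb{Q}] = 2[F:\mathbb{Q}]$, the overall prefactor collapses to $\frac{|C_K|}{w(K)[K:\mathbb{Q}]}$, yielding the claimed formula.

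This theorem is essentially a bookkeeping corollary, so there is no serious obstacle: the real content is already in Theorems 3.10 and \ref{thm3.11}. The only point worth verifying carefully is that $e_{\mathfrak{p}_F}$ and $\epf$ genuinely depend only on $\mathfrak{p}_F$ rather than on the chosen $\p$ above it, which is immediate from the transitive action of $\text{Gal}(\tilde{K}/F)$ on primes over $\mathfrak{p}_F$.
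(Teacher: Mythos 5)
Your proposal is correct and follows essentially the same route as the paper: substitute Theorems 3.10 and \ref{thm3.11} into the definition of the Arakelov degree, restrict to nonsplit $\mathfrak{p}_F$, and collapse the sum over $\p \mid \mathfrak{p}_F$ via $\sum_{\p\mid\mathfrak{p}_F} e_{\mathfrak{p}_F} f_{\p/\mathfrak{p}_F} = [\tilde{K}:F]$ so that $[\tilde{K}:F]/(2[\tilde{K}:\mathbb{Q}]) = 1/[K:\mathbb{Q}]$. Your explicit appeal to $\tilde{K}/F$ being Galois is a harmless variant of the paper's use of the identity $\sum_i e_i f_i = [\tilde{K}:F]$.
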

\begin{proof}
Let $f_{\p/\p_F}$ be the residue degree of $\p$ over $\p_F$. Now by theorem 3.10 and 3.11, we have
\begin{align*}
\widehat{\deg} \Z(\alpha) &= \frac{1}{[\tilde{K}:\mathbb{Q}]} \sum_{\p \subset \tilde{O}} \log\; N(\p) \sum_{z \in \Z(\alpha)(\ktildebarp)} 
\frac{\text{length}(O^{\text{\'et}}_{\Z(\alpha),z})}{ \# \text{Aut}\; z} \\
& = \frac{1}{[\tilde{K}:\mathbb{Q}]}\sum_{\p \subset \tilde{O}}{}^{\prime} \log\; N(\p) \frac{e_{\p_F}(\text{ord}_{\p_F}(\alpha) + 1)|C_K|\rho(\alpha \p_F^{-\epsilon_{\p_F}}O_F)}{2w(K)} \\
& = \frac{|C_K|}{2w(K)[\tilde{K}:\mathbb{Q}]}\sum_{\p \subset \tilde{O}}{}^{\prime} \log\; N(\p_F) e_{\p_F}f_{\p/\p_F}(\text{ord}_{\p_F}(\alpha) + 1)\rho(\alpha \p_F^{-\epsilon_{\p_F}}O_F) \\
& = \frac{|C_K|}{2w(K)[\tilde{K}:\mathbb{Q}]}\sum_{\p \subset O_F}{}^{\prime} \log\; N(\p) [\tilde{K}:F](\text{ord}_{\p}(\alpha) + 1)\rho(\alpha \p^{-\epsilon_{\p}}O_F) \\
& = \frac{|C_K|}{w(K)} \sum_{\mathfrak{p} \subset O_F}{}^{\prime} \frac{\log N(\mathfrak{p})}{[K:\mathbb{Q}]}(\ord_{\mathfrak{p}}(\alpha)+1)\rho(\alpha \mathfrak{p}^{-e_{\mathfrak{p}}})
\end{align*}
where prime over sigma means that we are considering primes $\p$ in $\tilde{K}$ such that $\p_F$ is nonsplit in $K$ in the first two sums, and primes in $F$ that are nonsplit in $K$ in the last two sums. This finishes the proof of the theorem.
\end{proof}

\section{Arithmetic Chow group}
An arithmetic divisor on $\cm_{\Phi_1} \times_{\tilde{O}} \cm_{\Phi_2}$ is a pair 
$(\text{Z},\text{Gr})$ where Z is a divisor and Gr a Green function for Z. As $\mathcal{Z}(\alpha)$  does not have characteristic zero points, the Green function for $\mathcal{Z}(\alpha)$ can be any complex-valued function on the finite set 
$$\coprod_{\sigma: \tilde{K} \rightarrow \mathbb{C}}  (\cm_{\Phi_1} \times_{\tilde{O}} \cm_{\Phi_2})^{\sigma}(\mathbb{C})$$
Recall that for a scheme $S \rightarrow \text{Spec}\; \tilde{K}$ and a map $\sigma: \tilde{K} \rightarrow \mathbb{C}$, $S^{\sigma}$ is obtained by the base change along the map
$\text{Spec}\;\mathbb{C} \rightarrow \text{Spec}\; \tilde{K}$.  We define the Green's function for $\mathcal{Z}(\alpha)$ first for $(\cm_{\Phi_1} \times_{\tilde{O}} \cm_{\Phi_2})(\mathbb{C})$ at
$(A_1,A_2) = z \in (\cm_{\Phi_1} \times_{\tilde{O}} \cm_{\Phi_2})(\mathbb{C})$ to be
$$\text{Gr}_{\alpha}(y,z) = \sum_{\substack{f \in L_{Betti}(A_1,A_2)\\\langle f,f \rangle = \alpha}} \beta_1(4\pi |y\alpha|_{\infty^{sp}})$$
where $\beta_1: \mathbb{R}^{>0} \rightarrow \mathbb{R}$ is $\beta_{1}(t) = \int_{1}^{\infty} e^{-tu} \frac{du}{u}$. Also, to define $\text{Gr}_{\alpha}$ on $z \in (\cm_{\Phi_1} \times_{\tilde{O}} \cm_{\Phi_2})^{\sigma}(\mathbb{C})$, first we extend $\sigma: \tilde{K} \rightarrow \mathbb{C}$ to $\tilde{\sigma} \in \text{Aut}(\mathbb{C})$ and we obtain two new CM-types $\tilde{\sigma} \circ \Phi_1$, $\tilde{\sigma} \circ \Phi_2$ and then 
$$(\cm_{\Phi_1} \times_{\tilde{O}} \cm_{\Phi_2})^{\sigma}(\mathbb{C}) = (\cm_{\tilde{\sigma} \circ \Phi_1} \times_{\tilde{O}} \cm_{\tilde{\sigma} \circ\Phi_2})$$ 
and then we take the $\text{Gr}_{\alpha}(y,z)$ for $(A_1,A_2) = z \in (\cm_{\Phi_1} \times_{O_{\Phi}} \cm_{\Phi_2})^{\sigma}(\mathbb{C})$ to be 
$$\sum_{\substack{f \in L_{Betti}(A_1,A_2)\\\langle f,f \rangle = \alpha}} \beta_1(4\pi |y\alpha|_{\tilde{\sigma} \circ \infty^{sp}})$$
Now as $\langle\; ,\; \rangle$ is negative definite  at $\infty^{sp}$ and positive definite at other places, we have $\text(Gr)_{\alpha} = 0$ unless $\alpha$ is negative definite at exactly one place. So, we define 
$$\widehat{\mathcal{Z}}(\alpha) = (\mathcal{Z}(\alpha),\text{Gr}_\alpha) \in \widehat{\text{CH}}^1(\cm_{\Phi_1} \times_{\tilde{O}} \cm_{\Phi_2})$$
where for $\alpha$ not totally positive, $\mathcal{Z}(\alpha) = 0$ in $\text{CH}^1(\cm_{\Phi_1} \times_{\tilde{O}} \cm_{\Phi_2})$.

Now the degree of an arithmetic divisor is defined by the composition of the following maps:
$$\widehat{\deg}: \widehat{\text{CH}}^1(\cm_{\Phi_1} \times_{\tilde{O}} \cm_{\Phi_2}) \rightarrow \widehat{\text{CH}}^1(\text{Spec}\; \tilde{O}) \rightarrow \mathbb{R}$$
where left hand side map is the push forward of $(\cm_{\Phi_1} \times_{\tilde{O}} \cm_{\Phi_2}) \rightarrow \text{Spec}\; \tilde{O}$. Then, we would have
$$\widehat{\deg}(Z,\text{Gr}) = \frac{1}{[\tilde{K}:\mathbb{Q}]}(\sum_{\mathfrak{p} \subset \tilde{O}} \sum_{z \in \Z(\ktildebarp)} \frac{\log N(\mathfrak{p})}{\# \text{Aut}\; z} + \sum_{\sigma: \tilde{K} \rightarrow \mathbb{C}}\sum_{z \in (\cm_{\Phi_1} \times_{\tilde{O}} \cm_{\Phi_2})^{\sigma}(\mathbb{C})}\frac{\text{Gr(z)}}{\# \text{Aut}\; z})$$
Then by Theorem \ref{thm3.12} and the above discussion, we have 
$$\widehat{\text{deg}}\widehat{Z}(\alpha) = 
\begin{cases} 
\frac{|C_K|}{w(K)} \sum_{\mathfrak{p}} \frac{\log N(\mathfrak{p})}{[K:\mathbb{Q}]}(\text{ord}_{\mathfrak{p}}(\alpha)+1)\rho(\alpha \mathfrak{p}^{-\epsilon_\mathfrak{p}}) & \text{if}\; \alpha \gg 0 \\
\frac{|C_K|}{w(K)}\frac{\rho((\alpha))}{[K:\mathbb{Q}]}\beta_1(4\pi|y\alpha|_{v}) & \alpha\; \text{is negative definite at exactly one place v} \\
0 & \text{otherwise}
\end{cases}$$
\section{Eisenstein series}

In this section, we will construct an Eisenstein series for $\text{SL}_2(F)$ such that the degree of the divisors and arithmetic divisors, as computed in the preceding section, is up to an explicit multiplicative constant equal to the Fourier expansion of the central value of the derivative of this Eisenstein series.

Let $v$ be a place of $F$,
$\psi_F = \psi_\mathbb{Q} \circ \text{tr}_{F/\mathbb{Q}}$ an additive character of $\mathbb{A}_F/F$, where $\psi_{\mathbb{Q}}$ is the canonical additive character of $\mathbb{A}_{\mathbb{Q}}/\mathbb{Q}$,
$\psi_{F_v} = \psi_F|_{F_v}$,
$\mathbb{H}_F = \lbrace x+iy \in F_{\mathbb{C}} | y \gg 0 \rbrace$,
for $s \in \mathbb{C}$, $g \in \text{SL}_2(F_v)$, $x \in F_v^{\times}$, let $\Phi_{x,\psi_{F_v}}(g,s) \in I(\chi_v,s)$ be the same section defined in \cite{Howard2012}, and $\Phi_{c,\psi_F} = \otimes_v \Phi_{c,\psi_{F_v}}$,
$\psi$ a (general) additive character of $F_v$,
for simplicity, $\chi$ be the character of $\mathbb{A}_F^{\times}$ associated to the quadratic extension $K/F$ and $\chi_v$ the character of $F_v^{\times}$ associated to $K_v/F_v$,
$L(s,\chi) = \prod_{w} L(s,\chi_w)$ where the product is over all places (including archimedean ones) $w$ of $F$ (If $w$ is archimedean, let
$L(s,\chi_w) = \pi^{-\frac{s+1}{2}} \Gamma(\frac{s+1}{2})$),
$\pi_{F_v}$ be a uniformizer of $F_v$ and $\mathfrak{p}_v$ the prime ideal of $v$ in $F$,
for $I$ an ideal of $O_F$, $N(I) = \#(O_{F}/I)$ as usual.

For $c \in \mathbb{A}_F^{\times}$, $g \in \text{SL}_2(F_v)$, We construct an Eisenstein series $$E(g,s,c,\psi_F) := \sum_{\gamma \in \text{B}(F)\textbackslash\text{SL}_2(F)} \Phi_{c,\psi_F}(\gamma g,s)$$
where $B$ is the Borel subgroup of $\text{SL}_2$.
For $\tau = x+iy \in \mathbb{H}_F$, take 
$$g_{\tau} = 
\begin{bmatrix}
    1 & x \\
    0 & 1
\end{bmatrix}
\begin{bmatrix}
y^{\frac{1}{2}} & 0 \\
0 & y^{-\frac{1}{2}}
\end{bmatrix}
$$
at the archimedean places and identity at finite places.\\
Now define 
$$\mathcal{E}(\tau, s, c, \psi_F) = \frac{L(s+1,\chi)}{N_{F/\mathbb{Q}}(y)^{\frac{1}{2}}} E(g_{\tau},s,c,\psi_F)$$
Also, we define the local Whittaker function at $v$ to be ($\alpha \in F_v^{\times}, g \in \text{SL}_2(F_v)$):
$$W_{\alpha}(g,s,c,\psi) = \int_{F_v} \Phi_{c_v,\psi}(\wmatrix \nmatrix g,s)\psi(-\alpha x)dx$$

where $dx$ is the Haar measure on $F_v$ with respect to $\psi$.\\
If $\delta \in F_v^{\times}$, we set $(\delta \psi)(x) = \psi(\delta x)$, then 
$\Phi_{c,\delta \psi} = \Phi_{\delta c, \psi}$, then
\begin{equation}
W_{\alpha}(g,s,c,\delta \psi) = |\delta|_v^{\frac{1}{2}} W_{\delta \alpha}(g,s,\delta c, \psi)
\end{equation}
Now $\mathcal{E}(\tau,s,c,\psi_F)$ is a Hilbert modular form and has a Fourier expansion
$$\mathcal{E}(\tau, s, c, \psi_{F}) = \sum_{\alpha \in F} \mathcal{E}_{\alpha}(\tau,s,c, \psi_F)$$
Now suppose that $c$ is a unit at finite places and $c_v = 1$ for all archimedean places and $\chi(c) = -1$, then $\mathcal{E}$ has parallel weight 1 and is incoherent, so $\mathcal{E}(\tau, 0, c, \psi_F)= 0$.\\
Now define the final Eisenstein series to be the following sum 
$$\mathcal{E}_{\Phi}(\tau, s) = \sum_{c \in \Xi } \mathcal{E}(\tau,s,c,\psi_F)$$
Where $\Xi$ is the $N_{K/F}(\widehat{O}_K^{\times})$-orbits of $c$ satisfying:\\
1) $c$ is a unit at finite places.\\
2) $c_v = 1$ for all archimedean places.\\
3) $\chi(c) = -1$.\\
Define the difference set $\text{Diff}(\alpha,c) = \lbrace v | \chi_v(\alpha c) = -1 \rbrace$ (By the condition $\chi(c) = -1$ and the product formula, this set has odd cardinality, also every $v \in \text{Diff}(\alpha,c)$ is nonsplit in $K$), also as before set 
$$\rho_v(I) = \# \lbrace J \subseteq O_{K,v} | J\bar{J} = IO_{K,v} \rbrace $$
We have that $\rho$ is a multiplicative function.\\
Now we compute the Fourier coefficients at $s=0$ of the derivative of $\mathcal{E}_{\alpha}$.
\begin{proposition}
\label{prop5.1}
Suppose that the following ramification conditions are satisfied:

1) $K/F$ is not unramified at all finite primes.

2) For every rational prime $l \leq \frac{[\tilde{K}:\mathbb{Q}]}{[K:\mathbb{Q}]}+1$, ramification index is less than $l$.

$\alpha \in F^{\times}, d_{K/F}$ be the relative discriminant of $K/F$, $r$ the number of places of $F$ (including archimedean places) ramified in $K$ and $c \in \Xi$, then:\\
1) $\ord_{s=0} \mathcal{E}_{\alpha}(\tau,s,c,\psi_F) \geq \# \emph{Diff}(\alpha,c) $ so if $\# \emph{Diff}(\alpha,c) \geq 2$, then $ \frac{d}{ds} \mathcal{E}_{\alpha}(\tau,s,c,\psi_F)|_{s=0} = 0$.\\
2) If $\emph{Diff}(\alpha,c) = \lbrace \mathfrak{p} \rbrace$  with $\mathfrak{p}$ finite, then 
$$\frac{d}{ds}\mathcal{E}_{\alpha}(\tau,s,c,\psi_F)|_{s=0} = \frac{-2^{r-1}}{N(d_{K/F})^{\frac{1}{2}}} \rho(\alpha \mathfrak{p}^{-e_{\mathfrak{p}}}) (\ord _{\mathfrak{p}}\;\alpha + 1)\log N(\mathfrak{p}) q^{\alpha}$$
where $q^{\alpha}$ means $e^{2\pi i \text{tr}_{F/\mathbb{Q}}(\alpha \tau)}$, and $e_{\mathfrak{p}}$ is, as before, 0 if $\mathfrak{p}$ is unramified in $K$ and is 1 otherwise.\\
3) If $\emph{Diff}(\alpha,c) = \lbrace v \rbrace$ for $v$ archimedean place of $F$, then
$$\frac{d}{ds}\mathcal{E}_{\alpha}(\tau,s,c,\psi_F)|_{s=0} = \frac{-2^{r-1}}{N(d_{K/F})^{\frac{1}{2}}}\rho(({\alpha})) \beta_{1}(4\pi|y\alpha|_{v})q^{\alpha}$$ 
where $\beta_1$ is defined in section 4.
\end{proposition}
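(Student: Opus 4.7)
The plan is to use the standard Bruhat-cell unfolding of the Eisenstein series to write each non-degenerate Fourier coefficient as a product of normalized local Whittaker functions, and then to analyze each local factor separately at $s=0$, following the strategy of Yang \cite{Yang2013}. Concretely, for $\alpha \in F^{\times}$ one obtains a factorization of the form
$$\mathcal{E}_{\alpha}(\tau,s,c,\psi_F) \;=\; C(s,y) \prod_v W_{\alpha,v}(g_{\tau,v},s,c_v,\psi_{F_v}),$$
where $C(s,y)$ collects the $L(s+1,\chi)$, the $N_{F/\mathbb{Q}}(y)^{-1/2}$, and the discriminant-volume factors coming from the choice of measures on $\mathbb{A}_F$ and on $\text{SL}_2(\mathbb{A}_F)$, and the product runs over all places $v$ of $F$.

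Next, I would invoke the explicit local Whittaker computations (carried out in the generality needed in \cite{Yang2013}) to compute each factor $W_{\alpha,v}(1,0,c_v,\psi_{F_v})$. The key dichotomy is controlled by the sign $\chi_v(\alpha c_v)$: at a finite place $v$ with $\chi_v(\alpha c_v) = +1$ (including all split and all ``good'' nonsplit places), the normalized Whittaker function at $s=0$ is nonzero and, up to an explicit unit, equals the local counting function $\rho_v(\alpha \beta_v^{-1} O_{F,v})$; at a place $v \in \text{Diff}(\alpha,c)$ the normalized Whittaker function has a \emph{simple} zero at $s=0$. Multiplying across all $v$, each element of $\text{Diff}(\alpha,c)$ contributes one factor of $s$, proving part (1) immediately. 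For parts (2) and (3), when $\text{Diff}(\alpha,c)=\{v_0\}$ I would compute
$$\tfrac{d}{ds}\mathcal{E}_\alpha|_{s=0} \;=\; C(0,y)\,\Bigl(\tfrac{d}{ds}W_{\alpha,v_0}|_{s=0}\Bigr)\prod_{v\ne v_0} W_{\alpha,v}(1,0,c_v,\psi_{F_v}),$$
where the product of good local values collapses, by multiplicativity of $\rho$, to the global counting function $\rho(\alpha\mathfrak{p}^{-e_\mathfrak{p}})$ (when $v_0=\mathfrak{p}$ is finite) or $\rho((\alpha))$ (when $v_0$ is archimedean). The local derivative at a finite prime $\mathfrak{p}$ gives the factor $(\ord_{\mathfrak{p}}\alpha+1)\log N(\mathfrak{p})$, while at an archimedean place the weight-one section contributes the function $\beta_1(4\pi|y\alpha|_v)$ after standard computations with the $K$-Bessel function. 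The remaining constants $L(1,\chi)$, the $2^{r-1}$ and the $N(d_{K/F})^{-1/2}$ arise from matching the chosen additive character $\psi_F$ with the self-dual measure on $F_v$ and from the class-number formula for the relative discriminant at the $r$ ramified places.

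The main obstacle is the careful bookkeeping of local normalization factors, uniformizers, and signs, together with the extraction of the derivative of $W_{\alpha,v_0}$ at the unique bad place. The derivative at a finite place must be extracted from an explicit integral expansion of $W_{\alpha,v}(1,s,c_v,\psi)$, and ramification condition (2) (ramification index less than $\ell$ for small primes $\ell$) is precisely what is needed to ensure that the $\ell$-adic expansions in Yang's formulas do not degenerate; ramification condition (1) ensures that the global factorization of $\rho$ used in assembling the good-place values is valid. Once these local pieces are in hand, the assembly is mechanical and yields the stated formulas.
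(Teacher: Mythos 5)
Your proposal is correct and follows essentially the same route as the paper: factor $\mathcal{E}_\alpha$ into normalized local Whittaker functions $W^{\ast}_{\alpha_v}$, invoke Yang's explicit local formulas to identify the vanishing at places in $\text{Diff}(\alpha,c)$ and the value/derivative dichotomy governed by $\chi_v(\alpha c)$, and assemble the product using multiplicativity of $\rho$ and the $\epsilon$-factor product formula. The only cosmetic difference is that the paper extracts the constants $2^{r-1}$ and $N(d_{K/F})^{-1/2}$ directly from the ramified-place factors $2N(\pi_{F_v})^{-\ord_v(d_{K/F})/2}$ in Yang's formulas rather than from a separate measure-normalization argument.
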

\begin{proof}
For $g \in \text{SL}_2(F_v)$, consider the normalized local Whittaker function
$$W_{\alpha_v}^{\ast}(g_v, s, c_v, \psi_{F_v}) = L(s+1,\chi_v) W_{\alpha_v}(g_v,s,c_v,\psi_{F_v})$$
Now we have the factorization of Fourier coefficients of $\mathcal{E}$:
$$\mathcal{E}_{\alpha}(\tau,s,c,\psi_F) = N_{F/\mathbb{Q}}(y)^{-\frac{1}{2}}\prod_{v} W_{\alpha_v}^{\ast}(g_{\tau,v},s,c_v,\psi_{F_v})$$
So by equation (3), we have
$$\mathcal{E}_{\alpha}(\tau,s,c,\psi_F) =N_{F/\mathbb{Q}}(y)^{-\frac{1}{2}} \prod_{v} W_{c_v^{-1}\alpha_v}^{\ast}(g_{\tau,v},s,1,c_v\psi_{F_v})$$
where $(c\psi_{F})(x) = \psi_{F}(c x)$ is an unramified character of $\mathbb{A}_F^{\times}$ and $(c_v\psi_{F_v})(x) = \psi_{F_v}(c_v x)$.\\
Now by the formulas of Yang, stated in prop. 2.1, 2.2, 2.3, 2.4 of \cite{Yang2013}, we have the following:\\
Suppose that $v$ is a finite prime of $F$, then 
$$\chi_v(\alpha c) = -1 \Leftrightarrow W_{c_v^{-1} \alpha_v}^{\ast}(g_{\tau, v},0,1,c_v\psi_{F_v}) = 0$$
(Recall that for a finite prime $v$, we have $g_{\tau,v} = I \in \text{SL}_2(F_v)$), so, we have the two cases:\\
1) $\chi_v(\alpha c)= 1$, then 
$$W_{c_v^{-1} \alpha_v}^{\ast}(g_{\tau, v},0,1,c_v\psi_{F_v}) =$$ $$= \chi_v(-1) \epsilon(\frac{1}{2},\chi_v,c_v\psi_{F_v})\rho_v(\alpha O_{F_v})
\begin{cases} 2N(\pi_{F_v})^{-\frac{\ord_v(d_{K/F})}{2}} & \text{if} \; v \; \text{ramified}\; \text{in} \; K/F \\
1 & \text{if} \; v \; \text{unramified}\; \text{in} \; K/F
\end{cases}$$
\\
2) $\chi_v(\alpha c)= -1$, then 
$$
\frac{d}{ds} W_{c_v^{-1}\alpha_v}^{\ast}(g_{\tau,v},s,1,c_v\psi_{F_v})|_{s=0} = $$
$$
= \chi_v(-1)\epsilon(\frac{1}{2},\chi_v,c_v\psi_{F_v})\log|\pi_{F_v}|^{-1} \frac{\ord_v(\alpha)+1}{2}
\begin{cases}
2N(\pi_{F_v})^{-\frac{\ord_v(d_{K/F})}{2}} \rho_v(\alpha O_{F_v}) & \text{if} \; v \; \text{ramified}\; \text{in} \; K/F \\
\rho_v(\alpha \mathfrak{p}_v^{-1}) & \text{if} \; v \; \text{unramified}\; \text{in} \; K/F
\end{cases}
$$
The point here is that :\\
1) If $v$ is ramified, $c_v^{-1} \alpha_v \in O_{F_v}$ iff $\rho_v(\alpha_v O_{F_v}) = 1$, so $\rho_v(\alpha_v O_{F_v}) = 1_{O_{F_v}}(c_v^{-1}\alpha_v)$.\\
2) $f = \ord_vd_{K/F} = 1$ (in Yang's formula prop. 2.3 (2), if $p \neq 2$ which is the case in here by our ramification condition) and also if $K_v/F_v$ is unramified, $\chi_v(\alpha_v c_v^{-1}) = -1 \Leftrightarrow K_v/F_v$ is inert and $\ord_{v}\alpha$ is odd (so we need $\mathfrak{p}_v^{-1}$ to make $\alpha \mathfrak{p}_v^{-1}$ have even order to get $\rho_v(\alpha \mathfrak{p}_v^{-1}) = 1_{O_{F_v}}(\alpha_v c_v^{-1})$).\\
3) $\chi_v(-1)\epsilon(\frac{1}{2},\chi_v,c_v\psi_{F_v}) = 1$ for $v$ unramified by prop 2.8 of \cite{kudla2003}.\\
4) If $v$ is split, then $\rho_v(\alpha_v O_{F_v}) = \ord_v\alpha + 1$ and if $v$ is inert, then $\rho_v(\alpha_v O_{F_v}) = \frac{1+(-1)^{\ord_v\alpha}}{2}$.\\
Also for an archimedean place $v$ of $F$, we have 
$$W_{c_v^{-1}\alpha_v}^{\ast}(g_{\tau,v},0,1,c\psi_{F_v}) = 2\chi_v(-1) \epsilon(\frac{1}{2},\chi_v,c\psi_{F_v})y_v^{\frac{1}{2}}e^{2\pi i\alpha_v \tau_v}$$
If $\chi_v(\alpha c) = -1$ then $W_{c_v^{-1}\alpha_v}^{\ast}(g_{\tau,v},0,1,c_v\psi_{F_v})=0$ and
$$\frac{d}{ds} W_{c_v^{-1}\alpha_v}^{\ast}(g_{\tau,v},s,1,c_v\psi_{F_v})|_{s=0} = 
\chi_v(-1)\epsilon(\frac{1}{2},\chi_v,c\psi_{F_v})y_v^{\frac{1}{2}}e^{2\pi i\alpha_v \tau_v}\beta_1(4\pi|y\alpha|_v)$$
Now if $\text{Diff}(\alpha,c) = \lbrace w \rbrace$, then
$$\frac{d}{ds} \mathcal{E}_{\alpha}(\tau,s,c,\psi_F)|_{s=0} = N_{F/\mathbb{Q}}(y)^{-\frac{1}{2}} \frac{d}{ds} W_{c_w^{-1}\alpha_w}^{\ast}(g_{\tau,w},s,1,c\psi_{F_w})|_{s=0} \prod_{v \neq w}W_{c_v^{-1}\alpha_v}^{\ast}(g_{\tau,v},0,1,c\psi_{F_v})$$
So by the aforementioned formulas, we obtain the formulas stated in the theorem by using the fact that
$$\prod_v\epsilon(\frac{1}{2},\chi_v,c\psi_{F_v}) = \chi(c) \prod_{v}\epsilon(\frac{1}{2},\chi_v,\psi_{F_v})= (-1)(1) = -1$$
\end{proof}
Now we have the Fourier expansion 
$$\mathcal{E}_{\Phi} = \sum_{\alpha \in F} \mathcal{E}_{\Phi,\alpha}$$
for $\mathcal{E}_{\Phi}(\tau,s) = \sum_{c \in \Xi} \mathcal{E}_{\alpha}(\tau,s,c,\psi_F)$, so that
$$\frac{d}{ds} \mathcal{E}_{\Phi,\alpha}|_{s=0} = \sum_{c \in \Xi} \mathcal{E}_{\alpha}^{\prime}(\tau,s,c,\psi_F)\frac{-2^{r-1}}{N(d_{K/F})^{\frac{1}{2}}}\sum_{\substack{\mathfrak{p}\subseteq O_F\; \\ \text{nonsplit}}} (\ord_{\mathfrak{p}}\alpha + 1)(\rho(\alpha \mathfrak{p}^{-e_{\mathfrak{p}}}))(\log N(\mathfrak{p}))q^{\alpha} =: b_{\Phi}(\alpha,y)$$
So we get to the main theorem:
\begin{theorem}
\label{Thm5.2}
Suppose that conditions of proposition \ref{prop5.1} are satisfied, then we have
$$\widehat{\deg}\widehat{\mathcal{Z}}(\alpha) = \frac{-|C_K|}{w(K)} \frac{\sqrt{N_{F/\mathbb{Q}}(d_{K/F})}}{2^{r-1} [K:\mathbb{Q}]} b_{\Phi}(\alpha,y)$$
\end{theorem}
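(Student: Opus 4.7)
The plan is to prove Theorem \ref{Thm5.2} by a case analysis on the signature of $\alpha$, comparing the formula for $\widehat{\deg}\widehat{\mathcal{Z}}(\alpha)$ established in Section 4 (which packages Theorem \ref{thm3.12} for the finite part together with the Green's function for the archimedean part) against the formula for $b_\Phi(\alpha,y)$ assembled from Proposition \ref{prop5.1}. Both expressions decompose as sums over nonsplit places of $F$, so the argument reduces to matching individual place contributions and then chasing the global multiplicative constants. If $\alpha$ is neither totally positive nor negative definite at exactly one archimedean place, then $\widehat{\mathcal{Z}}(\alpha) = 0$ by construction, and on the Eisenstein side every $c \in \Xi$ has $|\mathrm{Diff}(\alpha,c)| \neq 1$ (since the signature obstruction forces at least three places), so Proposition \ref{prop5.1}(1) gives $b_\Phi(\alpha,y) = 0$ as well.

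For $\alpha \gg 0$, the left-hand side is
\[
\widehat{\deg}\widehat{\mathcal{Z}}(\alpha) = \frac{|C_K|}{w(K)} \sum_{\mathfrak{p}} \frac{\log N(\mathfrak{p})}{[K:\mathbb{Q}]}(\ord_{\mathfrak{p}}\alpha+1)\rho(\alpha \mathfrak{p}^{-e_{\mathfrak{p}}})
\]
by Theorem \ref{thm3.12}, and for each nonsplit finite prime $\mathfrak{p}$ of $F$, I would identify the set of $c \in \Xi$ with $\mathrm{Diff}(\alpha,c) = \{\mathfrak{p}\}$ and sum the formula of Proposition \ref{prop5.1}(2) over these $c$ to obtain the corresponding summand of $b_\Phi(\alpha,y)$. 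For $\alpha$ negative definite at a single archimedean place $v$, the Section 4 expression $\frac{|C_K|}{w(K)} \frac{\rho((\alpha))}{[K:\mathbb{Q}]}\beta_1(4\pi|y\alpha|_v)$ matches Proposition \ref{prop5.1}(3) by direct comparison of the $\beta_1$ factors. In either case, after pulling out the common factor $\frac{|C_K|}{w(K)[K:\mathbb{Q}]}$ and the Eisenstein normalization $\frac{-2^{r-1}}{\sqrt{N_{F/\mathbb{Q}}(d_{K/F})}}$, the place-by-place identity follows from $\rho = \prod_v \rho_v$ being multiplicative and from the computation of $\rho_v(\alpha \mathfrak{p}_v^{-e_{\mathfrak{p}_v}})$ recorded in the proof of Proposition \ref{prop5.1}.

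The main obstacle is the combinatorial bookkeeping of $\Xi$: one must show that for each nonsplit $\mathfrak{p}$ the set $\{c \in \Xi : \mathrm{Diff}(\alpha,c) = \{\mathfrak{p}\}\}$ has cardinality exactly $2^{r-1}$ divided by the contribution already absorbed into the constant, using the exact sequence
\[
1 \to C_K^{\circ} \to C_K \to \widehat{O}_F^{\times}/N_{K/F}\widehat{O}_K^{\times} \xrightarrow{\eta} \{\pm 1\}
\]
from the proof of Theorem 3.10 together with the incoherence condition $\chi(c) = -1$. The global sign is supplied by the identity $\prod_v \epsilon(\tfrac{1}{2},\chi_v,c_v\psi_{F_v}) = \chi(c)\prod_v \epsilon(\tfrac{1}{2},\chi_v,\psi_{F_v}) = -1$ noted in the proof of Proposition \ref{prop5.1}, and the factor $\sqrt{N_{F/\mathbb{Q}}(d_{K/F})}$ arises from the local epsilon factors at ramified primes. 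Once these constants are reconciled, the case analysis completes the proof.
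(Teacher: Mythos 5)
Your overall architecture is the same as the paper's: Theorem \ref{Thm5.2} is proved by nothing more than juxtaposing the degree computation (Theorem \ref{thm3.12} for $\alpha\gg 0$, the Section 4 formula for the other signatures) with the Fourier coefficient computation of Proposition \ref{prop5.1}, and your three-way case analysis on the signature of $\alpha$, including the parity argument that $|\mathrm{Diff}(\alpha,c)|\geq 3$ when $\alpha$ is negative at two or more archimedean places, is exactly what is needed.

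There is, however, a concrete error in what you call the ``main obstacle.'' The set $\{c\in\Xi : \mathrm{Diff}(\alpha,c)=\{\mathfrak{p}\}\}$ does \emph{not} have cardinality $2^{r-1}$ (in any normalization): it has cardinality $0$ or $1$. Indeed, since $c$ is a unit at finite places and $1$ at archimedean places, the condition $\chi_v(\alpha c)=1$ for $v\neq\mathfrak{p}$ and $\chi_{\mathfrak{p}}(\alpha c)=-1$ pins down the class of $c_v$ in $O_{F_v}^{\times}/N_{K_v/F_v}O_{K_v}^{\times}$ at \emph{every} finite ramified $v$, hence pins down $c$ in $\widehat{O}_F^{\times}/N_{K/F}\widehat{O}_K^{\times}\cong(\mathbb{Z}/2)^{r_{\mathrm{fin}}}$ uniquely; the product formula (using $\alpha\gg 0$) then shows this unique class automatically satisfies $\chi(c)=-1$, and the remaining conditions at the unramified nonsplit places are exactly the nonvanishing of $\rho(\alpha\mathfrak{p}^{-e_{\mathfrak{p}}})$. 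So each nonsplit $\mathfrak{p}$ contributes through exactly one $c\in\Xi$ when its term is nonzero. The factor $2^{r-1}$ is \emph{not} a count over $\Xi$: it is already present in the single-$c$ formula of Proposition \ref{prop5.1}, arising from the product over the $r$ ramified places of the local doubling factors $2N(\pi_{F_v})^{-\ord_v(d_{K/F})/2}$ (finite ramified $v$) and $2\chi_v(-1)\epsilon(\tfrac12,\chi_v,c\psi_{F_v})y_v^{1/2}$ (archimedean $v$), with the place in $\mathrm{Diff}$ omitted; the $\sqrt{N_{F/\mathbb{Q}}(d_{K/F})}$ likewise comes from these explicit $N(\pi_{F_v})^{-\ord_v(d_{K/F})/2}$ factors rather than from the epsilon factors, which only supply the global sign. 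If you carried out the counting as you describe, you would overcount by $2^{r-1}$; replace that step with the uniqueness argument above and the constants match.
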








\begin{bibdiv}
\begin{biblist}*{labels={alphabetic}}

\bib{Gillet1990}{article}{
   author={Gillet, H.},
   author={Soul\'{e}, C.},
   title={Arithmetic intersection theory},
   journal={Inst. Hautes \'{E}tudes Sci. Publ. Math.},
   number={72},
   date={1990},
   pages={93--174 (1991)},
   issn={0073-8301},
}

\bib{Howard2009}{article}{
   author={Howard, B.},
   title={Moduli spaces of CM elliptic curves and derivatives of Eisenstein series},
   journal={Lecture notes at the Morningside Center of Mathematics},
   date={2009}
}

\bib{Howard2012}{article}{
   author={Howard, B.},
   title={Complex multiplication cycles and Kudla-Rapoport divisors},
   journal={Ann. of Math. (2)},
   volume={176},
   date={2012},
   number={2},
   pages={1097--1171},
   issn={0003-486X},
}

\bib{Kudla1997}{article}{
   author={Kudla, S.},
   title={Central derivatives of Eisenstein series and height pairings},
   journal={Ann. of Math. (2)},
   volume={146},
   date={1997},
   number={3},
   pages={545--646},
   issn={0003-486X},
}

\bib{kudla2003}{article}{
   author={Kudla, Stephen S.},
   title={Tate's thesis},
   conference={
      title={An introduction to the Langlands program},
      address={Jerusalem},
      date={2001},
   },
   book={
      publisher={Birkh\"{a}user Boston, Boston, MA},
   },
   date={2003},
   pages={109--131},
   review={\MR{1990377}},
}

\bib{Kudla2003}{article}{
   author={Kudla, S.},
   title={Modular forms and arithmetic geometry},
   conference={
      title={Current developments in mathematics, 2002},
   },
   book={
      publisher={Int. Press, Somerville, MA},
   },
   date={2003},
   pages={135--179},
   }
   
   \bib{rapoport}{article}{
   author={Rapoport, M.},
   title={Compactifications de l'espace de modules de Hilbert-Blumenthal},
   language={French},
   journal={Compositio Math.},
   volume={36},
   date={1978},
   number={3},
}

\bib{Yang2013}{article}{
   author={Yang, T.},
   title={CM number fields and modular forms},
   journal={Pure Appl. Math. Q.},
   volume={1},
   date={2005},
   number={2, Special Issue: In memory of Armand Borel},
   pages={305--340},
   issn={1558-8599},
}

\end{biblist}
\end{bibdiv}


\end{document}